\numberwithin{equation}{section}
\newtheorem{theorem}{Theorem}[section]
\newtheorem{lemma}{Lemma}[section]
\newtheorem{assumption}{Assumption}[section]
\newtheorem{coro}{Corollary}[section]
\newtheorem{prop}{Proposition}[section]
\newtheorem{remark}{Remark}[section]
\newtheorem{conjecture}{Conjecture}[section]
\newlength{\defbaselineskip}
\newcommand{\setlinespacing}[1]%
{\setlength{\baselineskip}{#1 \defbaselineskip}}
\def\E{\mathbb{E}}
\def\P{\mathbb{P}}
				\newcommand{\wt}{\widetilde}
				\newcommand{\R}  {\mathbb{R}}
				\newcommand{\N}  {\mathbb{N}}
				\newcommand{\non}{\nonumber}
				\newcommand{\baa}{\begin{eqnarray*}}
					\newcommand{\eaa}{\end{eqnarray*}}
				\newcommand{\ttl}{\Large
					Stochastic epidemic models with varying infectivity \\[5pt]  and waning immunity}
			\newcommand{\indic}[1]{\mathds{1}_{#1}}
			\renewcommand{\bar}[1]{\overline{#1}}
\begin{document}
				
				\title[]{\ttl}
				
				\author[Rapha\"el  Forien]{Rapha\"el Forien}
				\address{INRAE, Centre INRAE PACA, Domaine St-Paul - Site Agroparc
					84914 Avignon Cedex
					FRANCE}
				\email{raphael.forien@inrae.fr}

				\author[Guodong  Pang]{Guodong Pang}
				\address{Department of Computational Applied Mathematics and Operations Research, 
					George R. Brown School of Engineering and Computing,
					Rice University,
					Houston, TX 77005}
				\email{gdpang@rice.edu}
				
				\author[{\'E}tienne Pardoux]{{\'E}tienne Pardoux}
				\address{Aix Marseille Univ, CNRS, I2M, Marseille, France}
				\email{etienne.pardoux@univ.amu.fr}
				
				\author[Arsene Brice Zotsa--Ngoufack]{Arsene Brice Zotsa--Ngoufack}
				\address{Aix Marseille Univ, CNRS, I2M, Marseille, France}
				\email{arsene-brice.zotsa-ngoufack@univ-amu.fr}

				\date{\today}

				\begin{abstract} 
					We study an individual-based stochastic epidemic model in which infected individuals become susceptible again, after each infection. 
					In contrast to classical compartment models, after each infection, the infectivity is a random function of the time elapsed since infection.
					Similarly, recovered individuals become gradually susceptible after some time according to a random susceptibility function.
					
					We study the large population asymptotic behaviour of the model: we prove a functional law of large numbers (FLLN) and investigate the endemic equilibria of the limiting deterministic model. 
					The limit depends on the law of the susceptibility random functions but only on the mean infectivity functions. 
					The FLLN is proved by constructing a sequence of i.i.d. auxiliary processes and adapting the approach from the theory of propagation of chaos.
					The limit is a generalisation of a PDE model introduced by Kermack and McKendrick, and we show how this PDE model can be obtained as a special case of our FLLN limit.
					
					If $ R_0 $ is less than (or equal to) some threshold, the epidemic does not last forever and eventually disappears from the population, while if $ R_0 $ is larger than this threshold, the epidemic will not go extinct and there exists an endemic equilibrium.
					The value of this threshold turns out to be the harmonic mean of the susceptibility a long time after an infection.
				\end{abstract}
				
				\keywords{Stochastic epidemic model, varying infectivity, varying immunity/susceptibility, functional law of large numbers, integral equation, infection-age and recovery-age dependent PDE model}

				\maketitle

				\allowdisplaybreaks
\section{Introduction}

Many infectious diseases are such that the immunity acquired after recovery from the illness is eventually lost, after a period whose length varies both with the individual and with the illness. 
The huge majority of the literature on mathematical epidemiology which considers models with loss of immunity assumes that this loss happens instantaneously (see for example \cite{britton2018stochastic,PandPardoux-2020}).
However, it is rather clear that, in reality, immunity wanes progressively over some period, which can vary from one individual to another (see for example \cite{singhal2020review,tuteja2007malaria}).
In their pioneering 1927 paper \cite{KMK}, Kermack and McKendrick introduced the first mathematical model of epidemic propagation in which the infectivity of individuals depends on the time elapsed since their infection (this time is often called the age of infection).
This model was deterministic and assumed that recovered individuals can no longer be infected.
In two subsequent papers, \cite{kermack_contributions_1932,kermack_contributions_1933}, Kermack and McKendrick introduced another deterministic model in which recovered individuals eventually lose their immunity.
They studied the conditions under which an infectious disease can become endemic, i.e., reach a stable equilibrium where some macroscopic fraction of the population is infected.
This kind of equilibrium for the deterministic system is called an endemic equilibrium, as opposed to the disease-free equilibrium, in which there are no infected individuals.

Our aim in the present paper is to revisit these questions with the use of a general stochastic model, following the recent developments carried out by the first three authors.
In \cite{forien2021epidemic}, the 1927 model of Kermack and McKendrick was obtained as the infinite population limit of an individual-based stochastic model in which the infectivity of individuals is a random function of their age of infection.
More precisely, it was shown that the fraction of susceptible individuals and the average infectivity (also called the force of infection) converge to a deterministic limit which solves a system of non-linear integral equations already introduced in \cite{KMK}. Note that an alternative to the integral equations model is a PDE model, see \cite{PP2023}.
The widespread ordinary differential equations (ODEs) compartmental SIR model is only a very special case of these equations, in which neither the infectivity nor the recovery rate depends upon the age of infection.
Such models are less realistic as they do not reproduce the dependence of the dynamics of the epidemic on its past \cite{sofonea2021memory,forien2021estimating}, and  thus neglect the inertia of the evolution of the epidemic. 
Note that ODE models are law of large numbers limit of Markov stochastic models, in which individuals move from one compartment to the next after an exponentially distributed time.

In the present paper, we introduce a very general model in which individuals are characterized by their infectivity and susceptibility, which are assumed to be given as random functions of their age of infection.
These random functions are assumed to be i.i.d. among the various individuals in the population, and a new independent pair of random functions is drawn at each new infection. See Figure~\ref{fig1} for a realization of the infectivity and susceptibility of an individual after his/her infection.

\begin{figure}[tbp] \label{fig1}
	\includegraphics[width=0.8\textwidth]{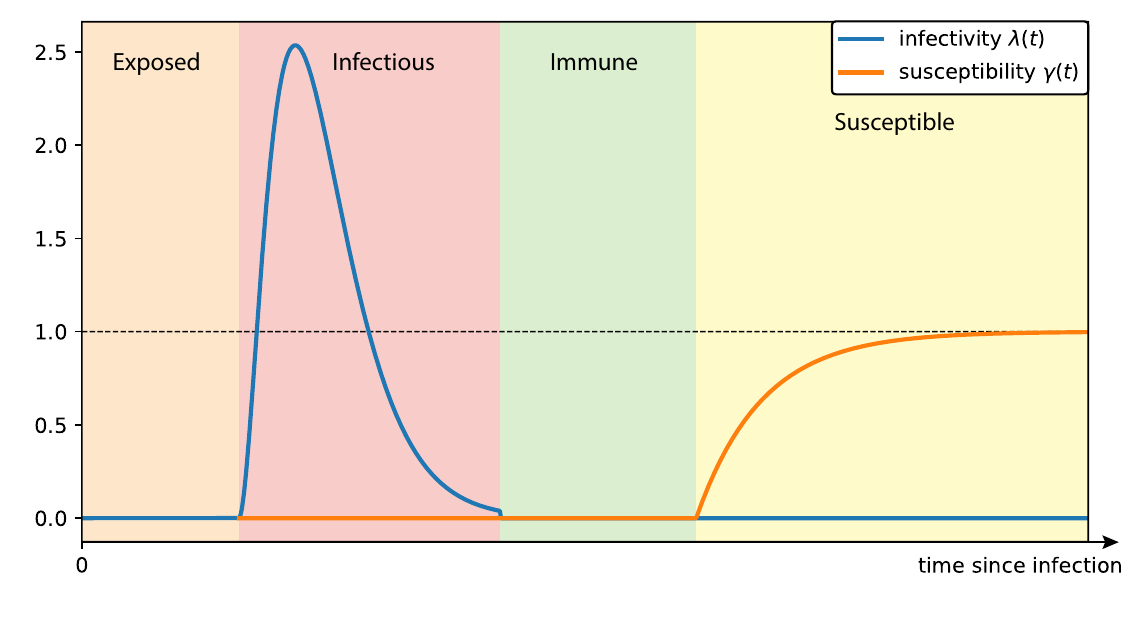}
	\caption{Illustration of a typical realization of the random infectivity and susceptibility functions of an individual from the time of infection to the time of recovery, and then to the time of losing immunity and becoming fully susceptible (or in general, partially susceptible).}
\end{figure}

Note however that, in contrast with the model in 
\cite{kermack_contributions_1932,kermack_contributions_1933}, we consider a closed population, in which there are no birth and no death (we do not exclude deaths due to infections, 
represented by individuals whose susceptibility remains equal to zero after the fatal infection).

We also assume (as in \cite{kermack_contributions_1932,kermack_contributions_1933}) that all pairs of individuals have contacts at the same rate. 
In other words, we assume a situation of homogeneous mixing. Of course, this is not quite satisfactory. 
One may wish to take into account the spatial distribution of the population, as well as the variety of social behaviors of the individuals. However, mathematical models involve necessarily a simplification of the complex reality. 
We believe that the results presented here constitute a significant progress over the classical models where all rates are constant and immunity is lost instantaneously.
In future works, we do intend to combine the complexity of the present work with that of inhomogeneous models, such as spatial models or models on graphons.

Besides the fact that we prefer to present our model in the form of a system of integral equations rather than a system of partial differential equations, our deterministic model is more general than the model of \cite{kermack_contributions_1932,kermack_contributions_1933}. The reason is the following. 
While the law of large numbers limit of the model with infection age dependent infectivity depends only on the mean of the random infectivity function \cite{forien2021epidemic}, the deterministic limit of the model with random susceptibility depends on the distribution of this random susceptibility function in a much more complicated way.
To see this, suppose that we wish to compute the average susceptibility of the individuals at some time $ t $.
This average can be obtained by summing the contributions of individuals that have not been infected on the interval $ [0,t) $, and of individuals that have been infected at some time $ s \in [0,t) $ and have not yet been reinfected again on the interval $ [s,t) $.
But the probability that such an individual has not been reinfected by time $ t $ depends on the full trajectory of their susceptibility, as well as on the trajectory of the force of infection, on the interval $ [s,t) $ (see \eqref{LLN_GF} below).
This explains why the deterministic limit is more complex than
that  in \cite{forien2021epidemic}, and shows that, in their 1932-33 model, Kermack and McKendrick implicitly assumed that the only random component of the susceptibility function is the time of recovery (see the discussion in Section~\ref{subsec:KMcK}).
As a consequence, we obtain a strict generalization of the model in \cite{kermack_contributions_1932,kermack_contributions_1933} as the deterministic limit of our stochastic model. 

Since in our model there is a flux of new susceptibles, due to waning of immunity, one expects that under certain conditions, there may be a stable endemic equilibrium.
We manage to study the existence, uniqueness and some stability properties of an endemic equilibrium (as well as the stability or instability of the disease--free equilibrium) in our  deterministic limit. 
We identify a threshold, which is the harmonic mean of the large time limit of the susceptibility (which is $1$ in Figure 1, but may be less than $1$ in our general model). 
If the basic reproduction number $R_0$ (defined below by \eqref{eqn-R0}) is smaller than this threshold, then the process converges to the disease-free equilibrium. We also show that under appropriate assumptions, if $R_0$ is larger than the threshold, and the model converges as $t\to\infty$ to some limit, then this limit is the unique endemic equilibrium, which is fully characterized. We conjecture that under appropriate assumptions, when $R_0$ is larger than the threshold, any solution of the deterministic limit starting with a non zero force of infection at time $t=0$ does converge to the endemic equilibrium. 

Let us comment on the method used to prove our law of large numbers result.  One key argument is based upon a coupling of  the processes which count the numbers of infections of the various individuals up to time $t$ with i.i.d. counting processes, where the renormalized force of infection is replaced by  its deterministic limit. This approach, for which the inspiration came from  \cite{chevallier2017mean}, is an application of ideas from the theory of propagation of chaos, see Sznitman \cite{sznitman1991topics}. Note both that we were unable to adapt the methodology of \cite{forien2021epidemic} to the setting of the present paper, and that the assumptions made here on  the random infectivity function are weaker than those made in \cite{forien2021epidemic}. Recently, the methodology of the present paper has been applied to  the model in \cite{forien2021epidemic}, thus resulting in the same result under weaker assumptions, see \cite{FPP-PUQR}. We also note that our approach  differs significantly from the techniques classically used for age-structured population models, as in \cite{oelschlager1990limit,jagers2000population,tran_large_2008,hamza2013age,hamza2016establishment,fan2020limit}. In these papers, the authors describe the model as a branching process (sometimes with interaction), which becomes an infinite dimensional Markov process if one adds all the age structure in the present state of the system, in which the lifespan, birth rate and death rate depend on the ages of all individuals in the population.
The state of such a model is then described by the empirical measure of the  ages of the individuals.
These works combine infinite-dimensional stochastic calculus and measure-valued Markov processes analysis in order to prove the convergence of the model.
In contrast, our stochastic models are non Markovian, and their deterministic limit does have a memory.

%

Using random infectivity and susceptibility functions allows us to build a very general model which is both versatile and tractable.
It captures the effect of a progressive loss of immunity, and this loss is allowed to be  different from one individual to another.
The integral equations that we obtain to describe the large population limit of our model are both compact and extremely general, since most epidemic models with homogeneous mixing and a closed population can be written in this form.
The effect of the variability of susceptibility on the endemic threshold has received very little attention in the literature, despite some profound implications which we outline in the present work.
The fact that the threshold depends on the harmonic mean of the susceptibility reached after an infection shows that the heterogeneity of immune responses in real populations should not be neglected in public health decisions.
Similarly, the variability of the immune response after vaccination (both in time and between individuals) should affect the efficacy of vaccination policies in non-trivial ways, although these questions are outside the scope of the present work.

We want to comment on the terminology which can be used for our model as a compartmental model. Recall that the compartments most classically used in epidemic models include S for Susceptible individuals, E for Exposed (those infected individuals who are not yet infectious), I for infectious and R for Recovered. We claim that our model can be classified as an SEIRS, SIRS, or SIS model. Indeed, referring to Figure~\ref{fig1}, we can consider that a given individual passes from the S to the E compartment when  he/she becomes infected, then into the I compartment when the attached infectivity first becomes positive, into the R compartment when the infectivity reaches $ 0 $ and remains null, and into the S compartment when the attached susceptibility becomes positive. However, without modifying the dynamics of the epidemic, we can merge the E and I compartments into a compartment I (for infected), where the infectivity need not be positive all the time. Similarly, we can merge the R and S compartments into the S compartment (or U, for uninfected, as we suggest below), whose members may have a susceptibility equal to zero. As a matter of fact, our model
is vey general and includes most of the existing homogeneous epidemic models without demography as particular examples. It is non Markov. All the rates are not only infection age dependent, but also random, i.e., different from one individual to another, which we believe reflects the reality of epidemics.

Models with gradual waning of immunity have been studied since Kermack and McKendrick by only a handful of authors, including Inaba, who in a series of works, see \cite{inaba2001kermack,inaba2004mathematical,inaba_endemic_2016,inaba_variable_2017}, has 
performed a careful mathematical study of the PDE model from \cite{kermack_contributions_1932,kermack_contributions_1933}, as well as Breda et al. \cite{breda2012formulation} who have considered an integral equation version of the same model. Other authors have pursued the study of the system of ODE/PDEs, see in particular Thieme and Yang \cite{ThieYang}, Barbarossa and R\"ost \cite{BarRos} and Carlsson et al. \cite{Carls}.  More recently, Khalifi and Britton
\cite{Khal-Brit},  compare the level of immunity in a model with gradual vs. sudden loss of immunity. We also mention the recent work  \cite{foutel2025optimal}, where a similar stochastic model of varying infectivity and waning immunity with vaccination is studied. 
We notice the difference from our modeling approach besides the vaccination aspect: the random susceptibility function and the random infectivity function are assumed mutually independent in each infection, contrary to what we assume here. In \cite{ngoufack2024functional} Zotsa-Ngoufack establishes the central limit theorem for the model described in the present paper. 

%


\subsection*{Organization of the paper} The rest of the paper is organized as follows. In Section \ref{sec-model}, we define the model in detail. In Section \ref{sec-FLLN}, we state the assumptions and the functional law of large numbers (FLLN) and discuss how the results reduce to already known results when we restrict ourselves to the classical SIS and SIRS models. The results on the endemic equilibrium are presented in Section \ref{sec-endemic-eq}. In Section \ref{sec-PDE},  we focus on the generalized SIRS model with a particular set of infectivity and susceptibility random functions and initial conditions, and show how the limit relates to the Kermack and McKendrick PDE model with the corresponding infection-age dependent infectivity and recovery-age dependent susceptibility. The proofs for the FLLN are given in Section \ref{sec-proofs-FLLN} and those for the endemic equilibrium in Section \ref{sec-proofs-endemic}.

\subsection*{Notation}
Throughout the paper, all the random variables and processes are defined on a common complete probability space $(\Omega, \mathcal{F},\P)$.  
We use $\xrightarrow[N\to+\infty]{\mathbb{P}}$ to denote convergence in probability as the parameter $N\to \infty$.
Let $\N$ denote the set of natural numbers and $\R^k (\R^k_+)$ the space of $k$-dimensional vectors with real (nonnegative) numbers, with $\R(\R_+)$ for $k=1$.  We use $\indic{\{\cdot\}}$ for the indicator function. Let $D=D(\R_+; \R)$ be the space of $\R$-valued c{\`a}dl{\`a}g functions defined on $\R_+$, with convergence in $D$ meaning convergence in the Skorohod $J_1$ topology (see, e.g., \cite[Chapter 3]{billingsley1999convergence}).  Also, we use $D^k$ to denote the $k$-fold product with the product $J_1$ topology. Let $C$ be the subset of $D$ consisting of continuous functions and $D_+$ the subset of $D$ of c{\`a}dl{\`a}g functions with values on $\R_+$

\bigskip 

\section{Model description}\label{sec-model}
\subsection{Definition of the model}
We consider a population of fixed size $N$. 
Let $(\lambda_0,\gamma_0)$ and $(\lambda,\gamma)$ be two random variables taking values in $D(\R_+, \R_+)^2$, and let $ \lbrace (\lambda_{k,0}, \gamma_{k,0}), 1 \leq k \leq N \rbrace $ be a family of i.i.d. copies of $ (\lambda_0, \gamma_0) $ and $ \lbrace (\lambda_{k,i}, \gamma_{k,i}), i \geq 1, 1 \leq k \leq N \rbrace $ be a family of i.i.d. copies of $ (\lambda, \gamma) $, independent of the previous family.
The function $\lambda_{k,0}$ (resp. $\gamma_{k,0}$) is the  infectivity (resp. susceptibility) of the $ k $-th individual between time $0$ and the time of his/her first (re)-infection, and $\lambda_{k,i}(t)$ denotes the infectivity of the $k$-th individual, at time $t$ after his/her $i$-th infection (where we count here only the infections after time $0$), and $\gamma_{k,i}(t)$ denotes the susceptibility of the $k$-th individual, at time $t$ after his/her $i$-th infection, given that this is his/her most recent infection.

We can think of the law of $(\lambda_0,\gamma_0)$ as being a mixture of the law for the initially infected individuals, who have been infected before time $0$ and for which $\lambda_0(0)\ge0$ and $\gamma_0(0)=0$, and the law for the initially susceptible individuals, for which $\lambda_0(0)=0$ and $\gamma_0(0)>0$ (possibly $\gamma_0(0)=1$).

We assume that $(\lambda_0,\gamma_0)$ and $(\lambda,\gamma)$ satisfy the following assumption.
\begin{assumption}\label{AS-lambda}
	We assume that:
	\begin{enumerate}
		\item $ 0 \leq \gamma_0(t)\leq1$ and $0\leq\gamma(t) \leq 1 $  almost surely and there exists a deterministic constant $ \lambda_* < \infty $ such that for all $ t \geq 0,\, 0\leq\lambda_0(t)\leq\lambda_*$ and $0\leq\lambda(t) \leq \lambda_* $ almost surely.
		\item Almost surely,
		\begin{multline}\label{eqq10}
			\sup\{t\geq0,\,\lambda_0(t)>0\}\leq\inf\{t\geq0,\,\gamma_0(t)>0\}\text{ and }\\ \sup\{t\geq0,\,\lambda(t)>0\}\leq\inf\{t\geq0,\,\gamma(t)>0\}.
		\end{multline}
	\end{enumerate}
\end{assumption}

Assumption~\ref{AS-lambda}-(ii) implies that, as long as an individual has not recovered, he/she cannot be reinfected.
Hence in each infected-immune-susceptible cycle, the infected and susceptible periods do not overlap.
Note that only (i) is necessary for the process to be well defined, and we discuss the consequences of removing part (ii) of the above assumption in Remark~\ref{remark:with_overlap} below. It would be rather natural to assume that $\gamma_0(t)$ and $\gamma(t)$ are a.s. non decreasing. We do not
make this restriction at this stage, since we do not need it. We shall make this assumption in Section~\ref{sec-endemic-eq} below.

For $ i \geq 0 $ and $ 1 \leq k \leq N $, we define 
\begin{align*}
	\eta_{k,0} := \sup \lbrace t \geq 0 : \lambda_{k,0}(t) > 0 \rbrace, \quad \text{ and }\quad
	\eta_{k,i} := \sup \lbrace t \geq 0 : \lambda_{k,i}(t) > 0 \rbrace.
\end{align*}
We will also use the notations
\begin{equation} \label{def:eta}
	\eta_0=\sup\{t>0,\ \lambda_0(t)>0\} \quad \text{ and } \quad \eta=\sup\{t>0,\ \lambda(t)>0\}.
\end{equation}

\begin{figure}[htb]
	\centering
	\includegraphics[width=\textwidth]{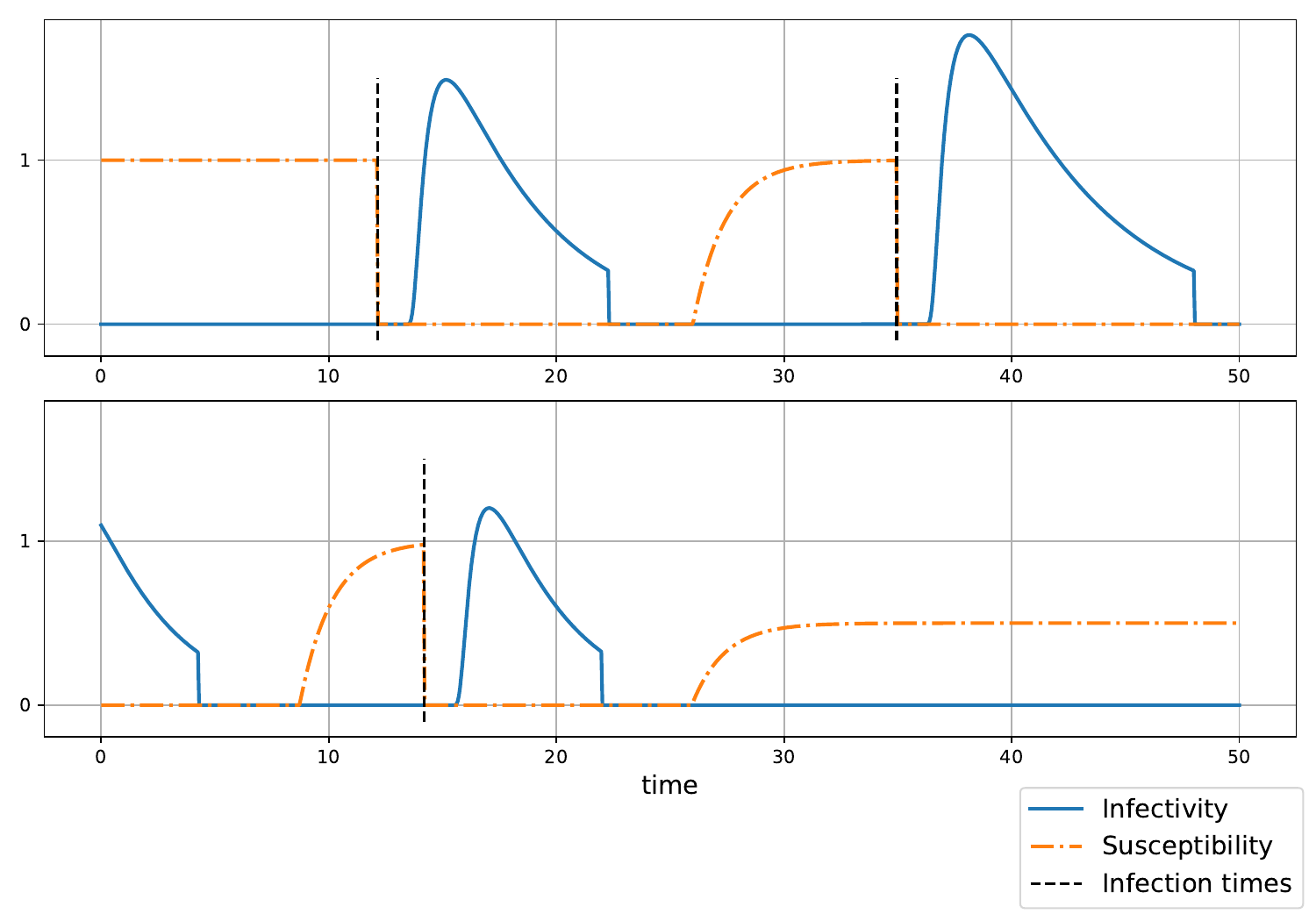}
	\caption{Illustration of the evolution of an individual's infectivity and susceptibility through time. Each graphic shows the dynamics of an individual's infectivity (blue) and susceptibility (orange). The top graphic corresponds to an individual which is initially susceptible, and the bottom one to an initially infectious individual. Note that, after being reinfected, the second individual remains partially immune even a long time after infection.} \label{fig:infection_cycles}
\end{figure}

\bigskip

Let $A^N_k(t)$ be the number of times that the individual $k$ has been (re)--infected on the time interval $(0,t]$.
The time elapsed since this individual's last infection (or since time $0$ if no such infection has occurred), is given by
\begin{align*}
	\varsigma^N_k(t) := t - \left(\sup\lbrace s \in (0,t] : A^N_k(s) = A^N_k(s^-) + 1 \rbrace\vee0\right),
\end{align*}
where we use the convention $\sup \emptyset = -\infty$.
With this notation, the current infectivity and susceptibility of the $k$-th individual are given by
\begin{align*}
	\lambda_{k,A^N_k(t)}(\varsigma^{N}_k(t)), && \text{ and } && \gamma_{k,A^N_k(t)}(\varsigma^{N}_k(t)).
\end{align*}
Figure~\ref{fig:infection_cycles} shows a realization of these processes for two individuals.
Let us now define $\overline{\mathfrak F}^N(t)$ and $ \overline{\mathfrak S}^N(t) $ as the average infectivity and susceptibility in the population, i.e.,

\begin{align*}
	\overline{\mathfrak{F}}^N(t) := \frac{1}{N} \sum_{k=1}^N \lambda_{k,A_k^N(t)}(\varsigma^N_k(t)), && \overline{\mathfrak{S}}^N(t):= \frac{1}{N} \sum_{k=1}^N \gamma_{k,A_k^N(t)}(\varsigma^N_k(t)). 
\end{align*}
The instantaneous rate at which the $ \ell $-th individual infects the $k$-th individual is
\begin{align*}
	\frac{1}{N} \lambda_{\ell,A^N_\ell(t)}(\varsigma^{N}_\ell(t)) \gamma_{k,A^N_k(t)}(\varsigma^{N}_k(t)) 
\end{align*}
where the $ \frac{1}{N} $ factor comes from the probability that the $ k $-th individual is chosen as the target of the infectious contact.
Summing over the index $\ell$, the instantaneous rate at which the $ k $-th individual is (re)infected is given by
\begin{align} \label{def_Upsilon}
	\Upsilon_k^N(t) := \gamma_{k,A^N_k(t)}(\varsigma^{N}_k(t))\, \overline{\mathfrak{F}}^N(t).
\end{align}

\bigskip

Let  now $ (Q_k, 1 \leq k \leq N) $ be an i.i.d. family of standard Poisson random measures (PRMs) on $ \R_+^2 $,  independent of the  sequence $(\lambda_{k,i},\gamma_{k,i})_{1\le k\le N, i\ge0}$.
The family of counting processes $(A^N_k(t), t \geq 0, 1 \leq k \leq N)$ is then defined as the solution of
\begin{align} \label{def_A_N_k}
	A^N_k(t) = \int_{[0,t] \times \R_+} \indic{u\leq\Upsilon^N_k(r^-)}Q_k(dr,du)\,.
\end{align}

Note that we construct $A^N_k$ by induction on the jumps times. Part $(i)$ of Assumption \ref{AS-lambda}  implies that the rate $\Upsilon_k^N(t)$ is bounded almost surely
by the constant $\lambda_\ast$. Consequently the jump times do not accumulate, and the above induction defines $A^N_k(t)$ for all $t\geq0$.

\subsection{Numbers of infected and uninfected individuals} \label{subsec:number_of_S_I}

For $ i \geq 1 $, $ \eta_{k,i} $ is the duration of the infected period of the $ k $-th individual following its $ i $-th infection, while $ \eta_{k,0} = 0 $ if the $ k $-th individual is initially susceptible and $ \eta_{k,0} > 0 $ is the remaining infected period of the $ k $-th individual if it is initially infected. 
We shall say that the $ k $-th individual is currently infected (resp. uninfected) if $ \varsigma^N_k(t) < \eta_{k,A^N_k(t)} $ (resp. $ \varsigma^N_k(t) \geq \eta_{k, A^N_k(t)} $).
Note that, with this definition, an individual need not be infectious when he/she is infected.
In the same way, an individual is called uninfected if he/she is \emph{no longer infected} or has never been infected, hence this group comprises both recovered and susceptible individuals.
This choice of two broadly defined compartments allows us to keep the notations tractable, but the equations obtained below can be generalized in a straightforward way to keep track of the number of individuals at different stages of their infection and susceptibility in more detail.

Let $ I^N(t) $ (resp. $ U^N(t) $) denote the number of infected (resp. uninfected) individuals in the population at time $ t \geq 0 $.
Then
\begin{align} \label{def_compartments}
	I^N(t) = \sum_{k=1}^{N} \indic{\varsigma^N_k(t) < \eta_{k,A^N_k(t)}}, && \text{ and } && U^N(t) = \sum_{k=1}^{N} \indic{\varsigma^N_k(t) \geq \eta_{k,A^N_k(t)}}.
\end{align}
Note that, quite obviously, $ U^N(t) + I^N(t) = N $ for all $ t \geq 0 $.
We then define
\begin{align*}
	\overline{I}(0) := \P(\eta_{0} > 0), && \overline{U}(0) := \P(\eta_{0} = 0) = 1-\overline{I}(0).
\end{align*}
Recall that $ \lbrace (\lambda_{k,0},\gamma_{k,0}), 1 \leq k \leq N \rbrace $, and hence $ (\eta_{k,0}, 1 \leq k \leq N) $, are independent and identically distributed.
Thus by the law of large numbers,
\begin{align*}
	\left( \frac{1}{N} I^N(0), \frac{1}{N} U^N(0) \right) = \left( \frac{1}{N} \sum_{k=1}^{N} \indic{\eta_{k,0} > 0}, \frac{1}{N} \sum_{k=1}^{N} \indic{\eta_{k,0} = 0} \right) \to \left( \overline{I}(0), \overline{U}(0) \right)
\end{align*}
almost surely as $ N \to \infty $.

\begin{remark} \label{rem-classical models}
	{\bf The classical compartmental models}
	
	Most compartmental epidemic models can be obtained as special cases of our model, as long as no population structure is assumed and birth and death are disregarded.
	Here are a few examples.
	\begin{enumerate}
		\item
		The SIS model considers that infected individuals instantly become infected with some deterministic infectivity $ \beta $, and remain infected for a random time $ \eta $, after which they become instantly fully susceptible again.
		Thus, this model is obtained by assuming that, 
		\begin{align} \label{def_lambda_gamma_SIS}
			\lambda(t) = \beta \indic{0 \leq t < \eta}, && \gamma(t) = \indic{t \geq \eta},
		\end{align}
		and $ \eta $ is a non-negative random variable, which follows an exponential distribution in the case of the Markov model.
		Let us also specify the distribution of $ (\lambda_{0}, \gamma_{0}) $.
		For this, fix $ \overline{I}(0) \in [0,1] $, set $ \overline{S}(0) = 1-\overline{I}(0) $ and let $ \chi$ be a random variable taking values in $ \lbrace S, I \rbrace $ such that $ \mathbb{P}(\chi = S) = \overline{S}(0) $ and $ \mathbb{P}(\chi = I) = \overline{I}(0) $.
		Then set
		\begin{align} \label{def_lambda0_gamma0_SIS}
			\lambda_{0}(t) = \begin{cases}
				0 & \text{ if } \chi = S, \\
				\beta \indic{0 \leq t < \eta_{0}} & \text{ if } \chi = I,
			\end{cases} && \gamma_{0}(t) = \begin{cases}
				1 & \text{ if } \chi = S, \\
				\indic{t \geq \eta_{0}} & \text{ if } \chi = I,
			\end{cases}
		\end{align}
		where $ \eta_{0} $ is a positive random variable (which follows an exponential distributions in the case of the Markov model).
		(Note that, with this definition, the model starts from a random initial condition, which is not always assumed in the literature, but does not greatly affect its behaviour.)
		
		\item The SIR model instead considers that, at the end of the infectious period, infected individuals recover from the disease, and can no longer be infected again.
		This model can be obtained from the above by proceeding as for the SIS model, but assuming instead that
		\begin{align} \label{def_gamma_SIR}
			\forall t \geq 0, \quad \gamma(t) = 0, && \text{ and } && \gamma_{0}(t) = \begin{cases}
				1 & \text{ if } \chi = S, \\
				0 & \text{ if } \chi = I.
			\end{cases}
		\end{align}
		Note that, in this case, if we keep a general distribution for the random functions $ \lambda $ and $ \lambda_{0} $, this model reduces to the one studied in \cite{forien2021epidemic}.

		\item
		The SIRS model assumes that, at the end of their infectious period, individuals stay immune to the disease for a random time $ \theta $, after which they become fully susceptible again.
		This model is obtained by assuming that,
		\begin{align} \label{def_lambda_gamma_SIRS}
			\lambda(t) = \beta \indic{0 \leq t < \eta}, && \gamma(t) = \indic{t \geq \eta + \theta},
		\end{align}
		where $ (\eta, \theta),$ is a pair of independent random variables taking values in $ \R_+^2 $ (which are distributed as pairs of independent exponential variables in the Markov models).
		The generalization of the definition of the distribution of $ (\lambda_{0}, \gamma_{0}) $ to this case is straightforward.
		
		Note that, in the last two cases, the quantity $ U^N(t) $ defined in \eqref{def_compartments} counts both susceptible and removed individuals.
		The actual number of susceptible and removed individuals in the SIRS model is given by
		\begin{align} \label{def_RN}
			S^N(t) := \sum_{k=1}^{N} \indic{\eta_{k,A^N_k(t)} + \theta_{k,A^N_k(t)} \leq \varsigma^N_k(t)}, &&  R^N(t) := \sum_{k=1}^{N} \indic{\eta_{k,A^N_k(t)} \leq \varsigma^N_k(t) < \eta_{k,A^N_k(t)} + \theta_{k,A^N_k(t)}},
		\end{align}
		where 
		\[\theta_{k,i}=\inf\{\theta>0,\gamma_{k,i}(\eta_{k,i}+\theta)>0\},\]
		following \eqref{def_lambda_gamma_SIRS}.

		In the SIR model, the above expression remains exact provided we set $ \theta_{k,i} = +\infty $ for $ i \geq 1 $, $ \theta_{k,0} = 0 $ if $ \chi_k = S $ and $ \theta_{k,0} = +\infty $ if $ \chi_k = I $.
	\end{enumerate}
	It is also common to assume that infected individuals do not become infectious right after being infected, but first become \textit{exposed} (i.e. infected but not yet infectious) before becoming infectious.
	This results in an additional compartment E, which can also be included in the above examples without difficulty. 
\end{remark}

\begin{remark}
	In \cite{chevallier2017mean}, Chevallier studied a model formulated as a system of age-dependent random Hawkes processes.
	This model considers a system of $ N $ neurons which fire at a rate depending both on the times of the previous firings of other neurons and on the time elapsed since their last firing (called the age process).
	The author proves in \cite{chevallier2017mean} a propagation of chaos result for the empirical measure of the point processes corresponding to the firing times of the neurons, and for the empirical measure of the age processes of the neurons, as $ N $ tends to infinity.
	Although neither our model or that of Chevallier can be formulated as a special case of the other, the two are closely related (if firing is understood as an analogous of being infected).
	The main difference between the two frameworks is in the assumptions on the randomness in the interaction between individuals after each firing/infection.
	For instance, our model would be closer to that of Chevallier if, instead of choosing a different infectivity function after each infection for each individual, we chose a different infectivity function for each \emph{directed pair} of individuals at the \emph{beginning}, and kept the same infectivity function for this pair of individual after each infection.
	Thus, the law of large numbers limit that we prove below can be seen as an extension of Chevallier's result, and indeed some steps of the proof are adapted from \cite{chevallier2017mean}.
	See also Theorem~\ref{LLN-P} in Section~\ref{sec-proofs-FLLN} whose formulation is closer to the propagation of chaos result of \cite{chevallier2017mean}.
	In \cite{chevallier_fluctuations_2017}, the same author also proves a central limit theorem for the empirical measures mentioned above, something which we do not do here, but will be the subject of future work.
\end{remark}

\bigskip 

\section{Functional law of large numbers} \label{sec-FLLN}

In this section we present the FLLN for the scaled processes $\big(\overline{\mathfrak{S}}^N, \overline{\mathfrak{F}}^N, \overline{U}^N, \overline{I}^N\big) $ where $\overline{U}^N:=N^{-1}U^N$ and $\overline{I}^N:=N^{-1}I^N$. 

Let 
\begin{align*}
	&\overline{\lambda}_0(t):=\E\big[\lambda_{0}(t)\Big|\eta_{0}>0\big],\,\quad \overline{\lambda}(t):=\E\left[\lambda(t)\right],\\
	&F^c_0(t):=\P\left(\eta_0>t\big|\eta_{0}>0\right),\quad F^c(t):=\P\left(\eta>t\right),
\end{align*}
and recall that $ \overline{I}(0)=\P\left(\eta_{0}>0\right) =\P(\chi_1=1)$.
We shall denote below by  $\mu$ the law of  $\gamma$.

To describe the limits of the FLLN, we introduce the following two-dimensional integral equation. 
Observe that the solution $(x,y)$ of this system depends on the laws of $\lambda_0$ and $\lambda$ only through the expectations $ \overline{\lambda} $ and $ \overline{\lambda}_0 $, but depends in a much more complex way on the law of $ \gamma $ and $ \gamma_0 $.

We consider the following system of integral equations for which we look for a solution $(x,y)\in D^2_+:$
\begin{equation}\label{LLN_xy}
	\left\lbrace
	\begin{aligned}
		x(t) &= \E\left[ \gamma_0(t) \exp\left(-\int_0^t \gamma_0(r) y(r) dr 	\right)\right]  \\
		&\qquad \qquad  + \int_0^t \E\left[ \gamma(t-s) \exp\left(- \int_s^t \gamma(r-s) y(r) dr \right)\right]x(s) y(s)ds, \\
		y(t) &=  \bar{I}(0)\overline {\lambda}_0(t) + \int_0^t \overline{\lambda}(t-s) x(s) y(s)ds.
	\end{aligned}
	\right.
\end{equation}
In \eqref{LLN_xy} the only random elements are $\gamma_0$ and $\gamma$. 

Our first result establishes existence and uniqueness of the solution $(x,y)$ of \eqref{LLN_xy}.  The proof is given in Section \ref{sec-proofs-FLLN}. 

\begin{theorem}\label{ExistUniq}
	Under Assumption~\ref{AS-lambda}, the set of equations \eqref{LLN_xy} has a unique solution $(\overline{\mathfrak{S}},\overline{\mathfrak{F}})\in D^2(\R_+,\R_+)$. The solution belongs
	to $C^2(\R_+)$ if $\gamma_0$ has bounded variation and the map $t\mapsto(\E\left[\gamma_0(t)\right],\overline{\lambda}_0(t))$ is continuous.
\end{theorem}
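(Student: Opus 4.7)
The plan is to prove existence and uniqueness on each compact interval $[0,T]$ by a two-step Picard-Banach argument, then extend globally by local uniqueness. In the first step, for any fixed non-negative bounded $y \in D([0,T])$, one views \eqref{LLNeq1} as a linear Volterra equation $x(t) = a_y(t) + \int_0^t K_y(t,s) x(s)\,ds$ with bounded forcing $a_y(t) := \E[\gamma_0(t) e^{-\int_0^t \gamma_0(r) y(r)\,dr}]$ and bounded kernel $K_y(t,s) := \E[\gamma(t-s) e^{-\int_s^t \gamma(r-s) y(r)\,dr}] y(s)$; standard Volterra theory yields a unique solution $\Psi(y) \in D([0,T])$. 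The derivation of the identity in Remark~\ref{rem-1} uses only \eqref{LLNeq1}, so that identity holds for $(\Psi(y), y)$. Comparing it term by term with \eqref{LLNeq1} via $0 \le \gamma, \gamma_0 \le 1$ forces the a priori bound $\Psi(y)(t) \le 1$ for every $t \in [0,T]$.

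In the second step, I substitute $x = \Psi(y)$ into \eqref{eqn-mfI1} to obtain the scalar equation $y = F(y)$, where
\begin{equation*}
F(y)(t) := \bar{I}(0)\,\overline{\lambda}_0(t) + \int_0^t \overline{\lambda}(t-s)\,\Psi(y)(s)\,y(s)\,ds.
\end{equation*}
I work on the closed set $\mathcal{Y}_T := \{y \in D([0,T]) : 0 \le y \le M_T\}$ with $M_T := \bar{I}(0)\lambda_* e^{\lambda_* T}$, equipped with the uniform norm. Using $\Psi(y) \le 1$ and $\overline{\lambda}_0, \overline{\lambda} \le \lambda_*$, a Gronwall-type estimate shows that $F$ maps $\mathcal{Y}_T$ into itself. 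The elementary estimate $|e^{-a} - e^{-b}| \le |a-b|$ (for $a,b \ge 0$), combined with the pointwise bounds $\gamma,\gamma_0 \le 1$ and $\Psi(y) \le 1$, yields a Lipschitz bound $\|\Psi(y_1) - \Psi(y_2)\|_\infty \le C_T \|y_1-y_2\|_\infty$ on $\mathcal{Y}_T$, and hence
\begin{equation*}
|F(y_1)(t) - F(y_2)(t)| \le L_T \int_0^t \sup_{r \in [0,s]} |y_1(r) - y_2(r)|\,ds.
\end{equation*}
Iterating this bound shows that a high enough power of $F$ is a strict contraction on $\mathcal{Y}_T$, so Banach's fixed point theorem provides the unique $\overline{\mathfrak{F}} \in \mathcal{Y}_T$; setting $\overline{\mathfrak{S}} := \Psi(\overline{\mathfrak{F}})$ completes the construction on $[0,T]$, and arbitrariness of $T$ together with local uniqueness yields a unique global solution in $D(\R_+; \R_+)^2$.

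For the regularity claim, assume $t \mapsto (\gamma_0(t), \overline{\lambda}_0(t))$ is continuous. Then $\E[\gamma_0(t) e^{-\int_0^t \gamma_0(r) \overline{\mathfrak{F}}(r)\,dr}]$ is continuous in $t$ by dominated convergence (using $\gamma_0 \le 1$ and boundedness of $\overline{\mathfrak{F}}$), and after the change of variables $u = t-s$ the second term of \eqref{LLNeq1} becomes $\int_0^t \E[\gamma(u) e^{-\int_0^u \gamma(r) \overline{\mathfrak{F}}(t-u+r)\,dr}] \overline{\mathfrak{S}}(t-u) \overline{\mathfrak{F}}(t-u)\,du$, which is continuous in $t$ by boundedness of the integrand and continuity of the upper limit, once $\overline{\mathfrak{S}}$ is known to be continuous (which follows by the analogous DCT argument on the forcing $a_y$). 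The right-hand side of \eqref{eqn-mfI1} is continuous by the same arguments. The main technical obstacle is the Lipschitz estimate for $\Psi$: since $y$ enters nested inside random exponentials in both the forcing and the kernel, one must carefully combine $|e^{-a}-e^{-b}| \le |a-b|$ with the uniform bounds $\gamma, \gamma_0 \le 1$ and $\Psi(y) \le 1$ to reduce differences of kernel and forcing to a clean linear bound in $\|y_1-y_2\|_\infty$, which is what allows the subsequent iteration-in-$n$ of the contraction estimate.
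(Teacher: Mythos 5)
Your two-step decoupling (freeze $y$, solve \eqref{LLNeq1} for $x$ as a linear Volterra equation, then run a fixed point in $y$) is genuinely different from the paper's proof, which bounds any solution a priori ($x\le 1$ via \eqref{eqG1}, $y\le\lambda^\ast$ via the non-overlap condition \eqref{eqq10}), proves uniqueness by a joint Gronwall estimate on the pair, and obtains existence by approximation. Your key observation -- that the identity of Remark~\ref{rem-1} uses only the $x$-equation, hence holds for $(\Psi(y),y)$ and yields $\Psi(y)\le 1$ -- is correct and is the right analogue of the paper's bound $x\le1$ (note the term-by-term comparison also needs $\Psi(y)\ge0$, which follows from positivity of the Volterra resolvent for a non-negative kernel and forcing; say a word about it). The genuine gap is the self-mapping step: with the constant cap $M_T=\bar I(0)\lambda_* e^{\lambda_* T}$, the bounds you invoke ($\Psi(y)\le1$, $\overline\lambda_0,\overline\lambda\le\lambda_*$) only give $F(y)(t)\le \bar I(0)\lambda_*+\lambda_* M_T\,t$ for $y\in\mathcal Y_T$, which at $t=T$ equals $\bar I(0)\lambda_*\left(1+\lambda_* T e^{\lambda_* T}\right)>M_T$ for every $T>0$; a Gronwall argument produces the exponential bound only for fixed points $y=F(y)$, not for the image of an arbitrary element of $\mathcal Y_T$. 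Invariance can in fact fail (take $\gamma\equiv0$, $\overline\lambda\approx\lambda_*$, $\bar I(0)$ small, $y\equiv M_T$), so the iterated-contraction argument -- whose Lipschitz constant depends on the uniform bound of the arguments -- cannot be run on your $\mathcal Y_T$ as stated.

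The repair is immediate: take instead $\mathcal Y_T=\lbrace y\in D([0,T]): 0\le y(t)\le \bar I(0)\lambda_* e^{\lambda_* t}\ \text{for all } t\rbrace$; then $F(y)(t)\le \bar I(0)\lambda_*+\lambda_*\int_0^t \bar I(0)\lambda_* e^{\lambda_* s}ds=\bar I(0)\lambda_* e^{\lambda_* t}$, the set is closed under the sup norm, and your Lipschitz estimate (with $M=M_T$) and the factorial iteration go through unchanged. You should also add one line explaining why any solution $(x,y)\in D_+^2$ of \eqref{LLNeq1}--\eqref{eqn-mfI1} lies in this set ($x\le1$ from \eqref{eqG1}, then Gronwall applied to \eqref{eqn-mfI1}), so that uniqueness of the fixed point really yields uniqueness of solutions; a side benefit of your route is that, unlike the paper's bound $y\le\lambda^\ast$, it never uses \eqref{eqq10}. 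Finally, Theorem~\ref{ExistUniq} claims $C^2$ regularity when $(\gamma_0,\overline\lambda_0)$ is continuous, whereas your argument only delivers continuity of $(\overline{\mathfrak S},\overline{\mathfrak F})$; the paper's own proof is silent on this point as well, but as written this part of the statement remains unproved in your proposal.
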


We have the following convergence result which we prove in Section~\ref{sec-proofs-FLLN}. 
\begin{theorem}\label{thm-FLLN}
	Under Assumption~\ref{AS-lambda},
	\begin{equation} \label{eqn-mfk-SF-conv}
		(\overline{\mathfrak{S}}^N,\overline{\mathfrak{F}}^N)\xrightarrow[N\to+\infty]{\mathbb{P}}(\overline{\mathfrak{S}},\overline{\mathfrak{F}})\quad\text{ in }\quad D^2
	\end{equation}
	where  $(\overline{\mathfrak{S}},\overline{\mathfrak{F}})$ is the unique solution of the system of equations \eqref{LLN_xy}.
	
	Given the solution   $(\overline{\mathfrak{S}},\overline{\mathfrak{F}})$, 
	\[(\overline{U}^N,\overline{I}^N)\xrightarrow[N\to+\infty]{\mathbb{P}} (\overline{U},\overline{I})\quad\text{ in }\quad D^2\]
	where $(\overline{U},\overline{I})$ is given by 
	\begin{align}
		\overline{U}(t)&=\E\left[ \indic{t\geq\eta_0} \exp\left(-\int_0^t \gamma_0(r) \overline{\mathfrak{F}}(r) dr \right)\right]  \non
		\\ &\qquad+ \int_0^t \E\left[ \indic{t-s\geq\eta} \exp\left(- \int_s^t \gamma(r-s) \overline{\mathfrak{F}}(r)dr \right)\right]\overline{\mathfrak S}(s) \overline{\mathfrak F}(s)ds\,, \label{eqn-barS}\\
		\overline{I}(t)&=\overline{I}(0)F^c_0(t)+\int_{0}^{t}F^c(t-s)\overline{\mathfrak{S}}(s)\overline{\mathfrak{F}}(s)ds\,. \label{eqn-barI}
	\end{align} 
	
\end{theorem}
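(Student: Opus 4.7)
The plan is to follow the propagation-of-chaos coupling strategy outlined in the introduction. I would introduce an auxiliary i.i.d.\ system driven by the \emph{deterministic} limit and couple it to the true interacting system through the common Poisson random measures $Q_k$. Concretely, on the same probability space, using the same $(\lambda_{k,i},\gamma_{k,i})_{i\ge 0}$ and $Q_k$, define processes $(\widetilde A_k,1\le k\le N)$ with intensity
\[
\widetilde\Upsilon_k(t):=\gamma_{k,\widetilde A_k(t^-)}\bigl(\widetilde\varsigma_k(t^-)\bigr)\,\overline{\mathfrak F}(t),
\]
where $\overline{\mathfrak F}$ is the deterministic function from Theorem~\ref{ExistUniq} and $\widetilde\varsigma_k$ is the associated age process. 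Since the inputs are i.i.d.\ across $k$, so are the $\widetilde A_k$.

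The next step is to identify the common marginal. By conditioning on the successive jump times of $\widetilde A_1$ and decomposing as in Remark~\ref{rem-1}, I would check that
\[
\mathbb E\bigl[\gamma_{1,\widetilde A_1(t)}(\widetilde\varsigma_1(t))\bigr]=\overline{\mathfrak S}(t),\qquad
\mathbb E\bigl[\lambda_{1,\widetilde A_1(t)}(\widetilde\varsigma_1(t))\bigr]=\overline{\mathfrak F}(t),
\]
using the identity \eqref{eqG1}, which is exactly the statement that the mass distribution over the number of past infections of an auxiliary individual sums to one. The classical LLN for the i.i.d.\ sample $(\widetilde A_k)_{k\le N}$ then yields pointwise convergence of the empirical averages $N^{-1}\sum_k\gamma_{k,\widetilde A_k(t)}(\widetilde\varsigma_k(t))$ and $N^{-1}\sum_k\lambda_{k,\widetilde A_k(t)}(\widetilde\varsigma_k(t))$ to $\overline{\mathfrak S}(t)$ and $\overline{\mathfrak F}(t)$ respectively, strengthened to uniform convergence on compacts via an equicontinuity argument that uses the boundedness $\lambda_{k,i}\le\lambda_*$ and $\gamma_{k,i}\le 1$.

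The heart of the proof is then the coupling estimate. Using the common $Q_k$, the discrepancy $D^N_k(t):=\mathbb E|A^N_k(t)-\widetilde A_k(t)|$ is driven by the difference of intensities: as long as $A^N_k=\widetilde A_k$, the two processes share the same $\gamma$-factor and the intensity gap is bounded by $|\overline{\mathfrak F}^N(t)-\overline{\mathfrak F}(t)|$; once they disagree, the $\gamma$-factors differ but remain bounded, and the extra time spent in the disagreement regime is itself controlled by $D^N_k$. Together with the decomposition
\[
\overline{\mathfrak F}^N(t)-\overline{\mathfrak F}(t)=\Bigl(\overline{\mathfrak F}^N(t)-\frac1N\sum_{k=1}^N\lambda_{k,\widetilde A_k(t)}(\widetilde\varsigma_k(t))\Bigr)+\Bigl(\frac1N\sum_{k=1}^N\lambda_{k,\widetilde A_k(t)}(\widetilde\varsigma_k(t))-\overline{\mathfrak F}(t)\Bigr),
\]
in which the first bracket is $O\bigl(\sup_k D^N_k(t)\bigr)$ and the second is $o_{\mathbb P}(1)$ by the previous step (and similarly for $\overline{\mathfrak S}^N-\overline{\mathfrak S}$), a Gronwall inequality closes the estimate and gives $\sup_{t\le T}\bigl(|\overline{\mathfrak F}^N(t)-\overline{\mathfrak F}(t)|+|\overline{\mathfrak S}^N(t)-\overline{\mathfrak S}(t)|\bigr)\xrightarrow{\mathbb P}0$. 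Since the jumps of $\overline{\mathfrak F}^N,\overline{\mathfrak S}^N$ are of order $\lambda_*/N$ and the limit is càdlàg, this upgrades to convergence in $D^2$ for the $J_1$ topology, proving \eqref{eqn-mfk-SF-conv}.

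For $(\overline U^N,\overline I^N)$, once $\overline{\mathfrak F}^N\to\overline{\mathfrak F}$ is established, applying the same coupling and LLN to the indicator averages $N^{-1}\sum_k\indic{\widetilde\varsigma_k(t)<\eta_{k,\widetilde A_k(t)}}$ and $N^{-1}\sum_k\indic{\widetilde\varsigma_k(t)\ge\eta_{k,\widetilde A_k(t)}}$ yields their limits; conditioning on the last infection time of $\widetilde A_k$ identifies these expectations with the right-hand sides of \eqref{eqn-barI} and \eqref{eqn-barS}. The main obstacle I expect is the coupling step: the rates of $A^N_k$ depend on all other processes through $\overline{\mathfrak F}^N$, \emph{and} the $(\lambda,\gamma)$ functions are resampled after each infection, so once $A^N_k$ and $\widetilde A_k$ disagree the two systems use entirely \emph{different} elements of the i.i.d.\ families $(\lambda_{k,i},\gamma_{k,i})$. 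Closing a Gronwall estimate despite this requires exploiting the independence of the ``unused'' $(\lambda_{k,i},\gamma_{k,i})$ from the past to keep the coupling effective, which is precisely where the techniques borrowed from \cite{chevallier2017mean} will do the essential work.
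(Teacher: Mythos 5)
Your strategy is essentially the paper's: the auxiliary i.i.d.\ processes driven by the deterministic solution $\overline{\mathfrak F}$ and built from the same $(\lambda_{k,i},\gamma_{k,i},Q_k)$ are exactly the processes $A_k$ of Section~\ref{sec-proofs-FLLN}; the identification of their marginals by conditioning on the jump times is Lemma~\ref{exist_A} combined with Theorem~\ref{ExistUniq} (see Remark~\ref{rq-6.2}); the coupling/Gronwall estimate with the two regimes (agreement and disagreement of $A^N_k$ and $A_k$, the latter controlled by the coupling discrepancy itself) is Lemma~\ref{lem_inq}; and the identification of \eqref{eqn-barS}--\eqref{eqn-barI} by conditioning on the time of the last infection is the final step of the paper's proof. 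Your replacement of the quantitative fluctuation bound by an $o_{\mathbb P}(1)$ statement is harmless: since everything is bounded by $\lambda_*$, convergence in probability upgrades to an $L^1$ bound that can be fed into the Gronwall inequality (the paper keeps the explicit $\lambda_*/\sqrt N$ coming from a variance computation, which yields the same conclusion with a rate).

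The one step that does not hold as written is the upgrade of the pointwise law of large numbers to locally uniform convergence ``via an equicontinuity argument using boundedness''. The summands $t\mapsto \lambda_{k,A_k(t)}(\varsigma_k(t))$ and $t\mapsto \gamma_{k,A_k(t)}(\varsigma_k(t))$ are only c\`adl\`ag (the random functions $\lambda_{k,i},\gamma_{k,i}$ may jump, and the limits $\overline{\mathfrak F},\overline{\mathfrak S}$ may themselves be discontinuous, e.g.\ through $\overline{\lambda}_0$), so boundedness gives no modulus of continuity, there is no equicontinuity to exploit, and Arzel\`a--Ascoli does not apply. The paper sidesteps this by invoking the law of large numbers in $D$ of \cite{rao1963law}, which gives convergence of the i.i.d.\ empirical averages in the Skorokhod topology --- precisely the mode of convergence asserted in \eqref{eqn-mfk-SF-conv} --- while the coupling terms vanish locally uniformly by Lemma~\ref{lem_inq}, so the sum converges in $D^2$. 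With this substitution (and the same device for the indicator averages defining $\overline U^N,\overline I^N$), your argument coincides with the paper's proof.
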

We rewrite \eqref{LLN_xy} as 
\begin{equation}\label{LLN_GF}
	\left\lbrace
	\begin{aligned}
		\overline{\mathfrak{S}}(t) &= \E \left[\gamma_{0}(t)\exp \left( - \int_{0}^{t} \gamma_0(r) \overline{\mathfrak F}(r) dr \right)\right] \\
		&\qquad \qquad + \int_{0}^{t} \E \left[\gamma(t-s) \exp \left( - \int_{s}^{t} \gamma(r-s) \overline{\mathfrak F}(r) dr \right) \right] \overline{\mathfrak S}(s) \overline{\mathfrak F}(s) ds, \\
		\overline{\mathfrak{F}}(t) &= \overline{I}(0)\overline{\lambda}_0(t)+\int_{0}^{t}\overline{\lambda}(t-s)\overline{\mathfrak S}(s)\overline{\mathfrak F}(s)ds\,.
	\end{aligned}
	\right.
\end{equation}
We prove the following Lemma in Section~\ref{sec-proofs-FLLN}.
\begin{lemma}\label{rem-1}
	If the pair $(x,y)$ is a solution to the set of equations \eqref{LLN_xy}, then for all $ t \geq 0 $,
	\begin{equation}\label{eqG1}
		\E \left[ \exp \left( - \int_{0}^{t} \gamma_0(r) y(r) dr \right)\right] + \int_{0}^{t} \E \left[ \exp \left( - \int_{s}^{t} \gamma(r-s) y(r) dr \right) \right] x(s) y(s) ds = 1.
	\end{equation}
	
\end{lemma}
\begin{remark} \label{remark:conservation_of_mass}
	Since $ \overline{U}^N(t) + \overline{I}^N(t) = 1 $ for all $ t \geq 0 $ and $ N \geq 1 $, it follows from the above convergence that $ \overline{U}(t) + \overline{I}(t) = 1 $ as well.
	Let us check that this follows also from the set of equations \eqref{LLN_GF} satisfied by $(\overline{\mathfrak S},\overline{\mathfrak F})$.
	
	First we note that, by \eqref{eqq10} in Assumption~\ref{AS-lambda}, $ \gamma(t) = 0 $ for all $ t \in [0, \eta) $ and $ \gamma_{0}(t) = 0 $ for $ t \in (0,\eta_0) $, hence
	\begin{align*}
		F_0^c(t) &= \E \left[ \indic{t < \eta_0} \exp\left( - \int_{0}^{t} \gamma_{0}(r) \overline{\mathfrak F}(r) dr \right) \right], \\ F^c(t-s) &= \E \left[ \indic{t-s < \eta} \exp\left( \int_{s}^{t} \gamma(r-s) \overline{\mathfrak F}(r) dr \right) \right].
	\end{align*}
	Hence, summing \eqref{eqn-barS} and \eqref{eqn-barI}, we obtain
	\begin{multline} \label{I+S}
		\overline{U}(t) + \overline{I}(t) = \E \left[ \exp \left( - \int_{0}^{t} \gamma_0(r) \overline{\mathfrak F}(r) dr \right)\right] \\+ \int_{0}^{t} \E \left[ \exp \left( - \int_{s}^{t} \gamma(r-s) \overline{\mathfrak F}(r) dr \right) \right] \overline{\mathfrak S}(s) \overline{\mathfrak F}(s) ds.
	\end{multline}
	Therefore, as $(\overline{\mathfrak S},\overline{\mathfrak F})$ is a solution of the set of equations \eqref{LLN_xy}, by Lemma~\ref{rem-1} we conclude that $ \overline{U}(t) + \overline{I}(t) = 1 $ for all $ t \geq 0 $.
	The identity \eqref{eqG1} can be seen as stating the conservation of the population size.
\end{remark}

\begin{remark} \label{remark:with_overlap}
	Without condition $\eqref{eqq10}$ of Assumption~\ref{AS-lambda}, the limit obtained in the functional law of large numbers satisfies a different set of equations.
	More precisely, the second equation in \eqref{LLN_GF} is replaced by 
	\begin{multline}\label{eqG2s-G-w}
		\overline{\mathfrak{F}}(t)=\E \left[\lambda_{0}(t)\exp \left( - \int_{0}^{t} \gamma_0(r) \overline{\mathfrak F}(r) dr \right)\right] \\
		+ \int_{0}^{t} \E \left[\lambda(t-s) \exp \left( - \int_{s}^{t} \gamma(r-s) \overline{\mathfrak F}(r) dr \right) \right] \overline{\mathfrak S}(s) \overline{\mathfrak F}(s) ds, 
	\end{multline}
	and \eqref{eqn-barI} by 
	\begin{multline}\label{eqG2s-F-w}
		\overline{I}(t)=\E\left[ \indic{\eta_0>t} \exp\left(-\int_0^t \gamma_0(r) \overline{\mathfrak{F}}(s) dr \right)\right] 
		\\ + \int_0^t \E\left[ \indic{\eta>t-s} \exp\left(- \int_s^t \gamma(r-s) \overline{\mathfrak{F}}(r)dr \right)\right]\overline{\mathfrak S}(s) \overline{\mathfrak F}(s)ds\,.
	\end{multline}
	\end{remark}
	
	\begin{remark}
		We can check that, in the special cases mentioned in Remark~\ref{rem-classical models}, the limiting system of equations obtained in Theorem~\ref{thm-FLLN} coincides with the corresponding models in the literature.
		\begin{enumerate}
			\item In the case of the SIS model, we note that, by \eqref{def_lambda_gamma_SIS} and \eqref{def_lambda0_gamma0_SIS},
			\begin{align*}
				\overline{\lambda}(t) = \beta \P(\eta > t), && \overline{\lambda}_0(t) = \beta \P(\eta_0 > t \,|\, \eta_0> 0).
			\end{align*}
			It thus follows from \eqref{LLN_GF} and \eqref{eqn-barI} that $ \overline{\mathfrak F}(t) = \beta \overline{I}(t) $ for all $ t \geq 0 $.
			Moreover, comparing \eqref{LLN_GF} and \eqref{eqn-barS} and using \eqref{def_lambda_gamma_SIS} and \eqref{def_lambda0_gamma0_SIS}, we see that $ \overline{\mathfrak{S}}(t) = \overline{U}(t)=\overline{S}(t) $.
			Combining this with the fact that $ \overline{I}(t) + \overline{U}(t) = 1 $, we obtain
			\begin{equation} \label{eqn-barI-SIS}
				\bar{I}(t) = \bar{I}(0) F^c_0(t) + \beta \int_0^t F^c(t-s) (1-\bar{I}(s)) \bar{I}(s) ds, 
			\end{equation}
			as stated in Theorem~2.3 of \cite{PandPardoux-2020}.
			
			\item In the case of the SIR model, from the definition of $ \gamma $ and $ \gamma_0 $ in \eqref{def_gamma_SIR} and \eqref{LLN_GF}, combined with the fact that $ \overline{\mathfrak F}(t) = \beta \overline{I}(t),$ we see that
			\begin{align*}
				\overline{S}(t)=\overline{\mathfrak S}(t) = \overline{S}(0) \exp\left( - \beta\int_{0}^{t} \overline{I}(r) dr \right) \quad \text{ and } \quad  \overline{U}(t)=\overline{ S}(t)+\overline{R}(t).
			\end{align*}
			This yields the statement of Theorem~2.1 in \cite{PandPardoux-2020}.
			
			\item In the case of the SIRS model, we note that, $ \overline{\mathfrak F}(t) = \beta \overline{I}(t) $, $ \overline{\mathfrak{S}}(t) = \overline{S}(t)$, in view of \eqref{eqn-barS}, $ \overline{U}(t) = \overline{S}(t) + \overline{R}(t) $, where
			\begin{multline*}
				\overline{S}(t) = \E\left[ \indic{\eta_0 + \theta_0 \leq t} \exp \left( -\beta \int_0^t \gamma_{0}(r) \overline{I}(r) dr \right) \right] \\ + \beta\int_{0}^{t} \E\left[ \indic{\eta + \theta \leq t-s} \exp \left( -\beta \int_{s}^{t} \gamma(r-s) \overline{I}(r) dr \right) \right] \overline{S}(s) \overline{I}(s) ds,
			\end{multline*}
			and
			\begin{multline*}
				\overline{R}(t) = \E \left[ \indic{\eta_0 \leq t < \eta_0 + \theta_0} \exp \left( - \beta\int_{0}^{t} \gamma_{0}(r) \overline{I}(r) dr \right) \right] \\ + \beta\int_{0}^{t} \E\left[ \indic{\eta \leq t-s < \eta + \theta} \exp\left( -\beta \int_{s}^{t} \gamma(r-s) \overline{I}(r) dr \right) \right] \overline{S}(s) \overline{I}(s) ds.
			\end{multline*}
			In fact, $ \overline{S} $ and $ \overline{R} $ are the limit of $ \frac{1}{N} S^N $ and $ \frac{1}{N} R^N $, respectively, where $ S^N $ and $ R^N $ are defined in \eqref{def_RN} (recall that, for simplicity, we assumed that no individual is initially in the R compartment). 
			But, from the definition of $ \gamma $ in \eqref{def_lambda_gamma_SIRS}, $\gamma(t) = 0$ for all $ t \in [\eta, \eta + \theta) $ and $ \gamma_{0}(t) = 0 $ for all $ t \in [\eta_0, \eta_0+\theta_0) $, hence
			\begin{multline*}
				\overline{S}(t) = \overline{S}(0) \exp \left( -\beta \int_{0}^{t} \overline{I}(r) dr \right) + \overline{I}(0) \E \left[ \indic{\eta_0 + \theta_0 \leq t} \exp \left( - \beta\int_{\eta_0+\theta_0}^{t} \overline{I}(r) dr \right) \right] \\ + \beta\int_{0}^{t} \E \left[ \indic{\eta + \theta \leq t-s} \exp \left( - \beta\int_{s + \eta + \theta}^{t} \overline{I}(r) dr \right) \right] \overline{I}(s) \overline{S}(s) ds,
			\end{multline*}
			and
			\begin{align*}
				\overline{R}(t) = \overline{I}(0) \P(\eta_0 \leq t < \eta_0 + \theta_0 \,| \, \eta_0 > 0) + \beta\int_{0}^{t} \P(\eta \leq t-s < \eta + \theta) \overline{I}(s) \overline{S}(s) ds.
			\end{align*}
			As $\overline{S}(t) + \overline{I}(t) + \overline{R}(t) = \overline{U}(t) + \overline{I}(t) = 1,$
			this yields the result stated in Theorem~3.3 in \cite{PandPardoux-2020}.
		\end{enumerate}
	\end{remark}
	
	\bigskip
	
	\section{The endemic equilibrium} \label{sec-endemic-eq}
	
	When the disease persists in the population, we say that the disease becomes endemic, and if it reaches an equilibrium, it is called the endemic equilibrium.
	This corresponds to a balance between the number of new infections and new recoveries.
	In this section, we study the conditions that lead to an endemic equilibrium in the FLLN limit.
	We first recall the endemic equilibrium behavior of  the classical SIS model discussed in Remark \ref{rem-classical models}. 
	Given the infectivity rate $\beta$, the basic reproduction number is given by $R_0=\beta\E[\eta]$.
	It is well known (see, e.g., \cite[Section 4.3]{PandPardoux-2020}, and also \cite{britton2018stochastic} for a discussion of the Markov model) that if $R_0 \le 1$, $\bar{I}(t)\to 0$ as $ t \to\infty $,  and if $R_0>1$ and $\bar{I}(0)>0,\,\bar{I}(t)\to 1-R_0^{-1}$ as $ t \to \infty $.  
	This can be easily obtained from the expression of $\bar{I}(t)$ in \eqref{eqn-barI-SIS}. In words, in the case $R_0\le 1$,  the disease-free steady state is globally asymptotically stable, and in the case $R_0>1$, 
	the disease-free steady state is unstable and there is exactly one endemic steady state, which is globally asymptotically stable. 
	Recall that in this model, $\overline{\lambda}(t) =\beta F^c(t)$. Thus, since $\int_0^\infty F^c(t)dt = \E[\eta]$, we have 
	\begin{equation}\label{eqn-R0}
		R_0 = \int_0^\infty \overline{\lambda}(t)dt.
	\end{equation}  
	In fact, the expression of $R_0$ in \eqref{eqn-R0} is the definition of the basic reproduction number in the Kermack and McKendrick model with an average infectivity function $\overline{\lambda}(t)$, because it represents the average number of individuals infected  by an infectious individual in a fully susceptible population.

	We make the following assumptions on the random susceptibility and infectivity functions in order to study the equilibria of our model. 
	
	\begin{assumption}\label{hyp-ga-1}
		\begin{enumerate}
			\item The random variable $ \eta $ defined in \eqref{def:eta} is integrable. 
			\item The random function $t \mapsto \gamma(t)$ is almost surely non-decreasing. 
			\item The pair $(\lambda_0,\gamma_0)$ is distributed as follows.
			Let $\xi\geq0$ be a random variable such that $\xi\leq\eta$ almost surely. 
			Let $\chi$ be a Bernoulli random variable with $\P(\chi=1)=\overline{I}(0)$, independent of $(\lambda,\gamma,\eta,\xi)$. Then
			\begin{equation*}
				\lambda_{0}(t)=\begin{cases}
					0&\mbox{ if }\chi=0,\\\lambda(t+\xi)&\mbox{ if }\chi=1.
				\end{cases}
				\quad \text{ and }\quad
				\gamma_{0}(t)=\begin{cases}
					1&\mbox{ if }\chi=0,\\\gamma(t+\xi)&\mbox{ if }\chi=1.
				\end{cases}
			\end{equation*} 
			The random variable $\xi$ represents the age of infection at time zero of the initially infectious individuals. 
		\end{enumerate}
	\end{assumption}
	
	Note that, by the definition of $ \gamma_0 $ and by (ii) $ t \mapsto \gamma_0(t) $ is also non-decreasing almost surely.
	In addition, since $ \lambda(t) \leq \lambda_* $ almost surely, by (i) and the dominated convergence theorem,
	\begin{equation} \label{lim_lambda_bar}
		\lim_{t \to \infty} \overline{\lambda}(t) = 0.
	\end{equation}

	We define
	\begin{align*}
		\gamma_\ast:=\sup_{t\geq0}\gamma(t)=\lim_{t\to+\infty}\gamma(t), \quad \text{ and } \quad \gamma_{0,*} := \lim_{t\to+\infty} \gamma_{0}(t).
	\end{align*}
	In the classical SIS model, the susceptibility functions are given by $\gamma(t) = \indic{t \geq \eta} $ and $ \gamma_{0}(t) = \indic{t \geq \eta_0}$ so that $\gamma_*=1$ a.s. However, in general, after being infected and recovered, individuals lose immunity gradually and do not necessarily reach ``full'' susceptibility (being equal to 1). Thus,  $\gamma_\ast$ may take any value in $[0,1]$ and is a priori random. 
	
	It turns out that the classification of the endemic equilibria depends on the law of $\gamma_*$, more specifically on whether $R_0$ is smaller than or larger than $\E[1/\gamma_*]$. Note that this expectation $\E[1/\gamma_*]$ may be infinite in general (for example if $ \P(\gamma_* = 0) > 0 $). 
	We first prove the following result under the condition that  $R_0 < \E[1/\gamma_*]$. 
	
	\begin{theorem}\label{thm-eqlm1}
		Under Assumptions~\ref{AS-lambda} and \ref{hyp-ga-1}, 
		if $R_0 < \E \left[ 1/\gamma_* \right],$ 
		there exists $ \overline{\mathfrak S}_* $ such that 
		\begin{align*}
			(\overline{\mathfrak S}(t),\overline{\mathfrak F}(t)) \to ( \overline{\mathfrak S}_*,0) \quad as \quad t \to \infty,
		\end{align*}
		where 
		\begin{multline} \label{eqn-mfkS*-1}
			\overline{\mathfrak S}_\ast=\lim_{t\to+\infty}\overline{\mathfrak S}(t) = \E\left[ \gamma_{0,*} \exp\left(-\int_0^{+\infty} \gamma_0(r) \overline{\mathfrak{F}}(r) dr \right)\right] 
			\\ + \int_0^{+\infty} \E\left[ \gamma_\ast \exp\left(- \int_s^{+\infty} \gamma(r-s) \overline{\mathfrak{F}}(r) dr \right)\right]\overline{\mathfrak{S}}(s) \overline{\mathfrak{F}}(s)ds.
		\end{multline}
		As a consequence, 
		$(\bar{U}(t), \bar{I}(t)) \to (1, 0)$ as $t\to\infty$. 
		In the special case $\gamma_*=1$ a.s., we also have $\overline{\mathfrak S}_*=1$. 
	\end{theorem}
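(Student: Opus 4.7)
The plan is to establish the result in three stages: first a global integrability bound for $\overline{\mathfrak{F}}$, then pointwise convergence $\overline{\mathfrak{F}}(t)\to 0$, and finally passage to the limit in \eqref{eqG2s-G} to produce $\overline{\mathfrak{S}}_\ast$. The main obstacle is extracting the sharp harmonic-mean threshold $R_0<\E[1/\gamma_\ast]$ in the first stage.

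I would first record the elementary bounds $\overline{\mathfrak{F}}(t)\le\lambda_\ast$ and $\overline{\mathfrak{S}}(t)\in[0,1]$, inherited from the pre-limit process, and integrate \eqref{eqn-mfI1} against $dt$, applying Fubini to get
\begin{equation*}
\int_0^T\overline{\mathfrak{F}}(t)\,dt \;\le\; \overline{I}(0)\!\int_0^{\infty}\!\overline{\lambda}_0(t)\,dt \;+\; R_0\!\int_0^T\!\overline{\mathfrak{S}}(s)\overline{\mathfrak{F}}(s)\,ds,
\end{equation*}
the first integral being finite under Assumption~\ref{hyp-ga-1}. It then remains to bound $\int_0^T\overline{\mathfrak{S}}\,\overline{\mathfrak{F}}\,ds$ uniformly in $T$. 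This is where the subcriticality condition enters: using the mass-conservation identity \eqref{eqG1}, the monotonicity $\gamma(t-s)\uparrow\gamma_\ast$ supplied by Assumption~\ref{hyp-ga-1}, and Fatou's lemma, I would argue by contradiction---assuming $\int_0^\infty\overline{\mathfrak{F}}=+\infty$, the self-consistency between \eqref{eqn-mfI1} and \eqref{eqG1} forces an inequality of the form ``$R_0\ge\E[1/\gamma_\ast]$'', contradicting the hypothesis. Hence $\int_0^\infty\overline{\mathfrak{F}}<+\infty$ and $\int_0^\infty\overline{\mathfrak{S}}\,\overline{\mathfrak{F}}\,ds<+\infty$.

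Given integrability, I would deduce $\overline{\mathfrak{F}}(t)\to 0$ by splitting the convolution $\int_0^t\overline{\lambda}(t-s)\overline{\mathfrak{S}}(s)\overline{\mathfrak{F}}(s)\,ds$ at $t/2$: the early piece is bounded by $\lambda_\ast\int_{t/2}^\infty\overline{\lambda}(u)\,du$ (using $\overline{\mathfrak{S}}\,\overline{\mathfrak{F}}\le\lambda_\ast$), and the recent piece by $\lambda_\ast\int_{t/2}^\infty\overline{\mathfrak{S}}(s)\overline{\mathfrak{F}}(s)\,ds$ (using $\overline{\lambda}\le\lambda_\ast$), both vanishing as $t\to\infty$. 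Combined with $\overline{\lambda}_0(t)\to 0$, this gives $\overline{\mathfrak{F}}(t)\to 0$. A dominated-convergence argument then applies to \eqref{eqG2s-G}: the monotonicities $\gamma_0(t)\uparrow\gamma_{0,*}$ and $\gamma(t-s)\uparrow\gamma_\ast$ provide pointwise limits, the exponential factors converge boundedly, and the integrand is dominated by $\overline{\mathfrak{S}}(s)\overline{\mathfrak{F}}(s)$; this produces \eqref{eqn-mfkS*-1}. The compartment limits $(\overline{U},\overline{I})\to(1,0)$ follow from \eqref{eqn-barI} by the same reasoning, using $F_0^c(t)\to 0$ and $F^c(t-s)\to 0$ for fixed $s$ together with integrability of $\overline{\mathfrak{S}}\,\overline{\mathfrak{F}}$. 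Finally, when $\gamma_\ast=\gamma_{0,*}=1$ a.s., the right-hand side of \eqref{eqn-mfkS*-1} coincides with the $t\to\infty$ limit of the identity \eqref{eqG1}, which equals $1$; hence $\overline{\mathfrak{S}}_\ast=1$.

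The delicate point is the harmonic-mean bound at the integrability step: a crude substitution of $\overline{\mathfrak{S}}\le 1$ in \eqref{eqn-mfI1} only yields the suboptimal threshold $R_0<1$, while Jensen's inequality applied naively to \eqref{eqG1} would produce $1/\E[\gamma_\ast]$ in place of $\E[1/\gamma_\ast]$. The improvement must exploit the joint randomness of $\gamma_\ast$ and the trajectory of $\overline{\mathfrak{F}}$, and its conceptual origin is the renewal/reinfection structure encoded in \eqref{eqG1}: averaging over individual realisations of $\gamma_\ast$ before inverting is what makes the harmonic mean emerge as the genuine bottleneck.
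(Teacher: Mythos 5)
Your overall architecture (integrability of $\overline{\mathfrak F}$, then $\overline{\mathfrak F}(t)\to0$ by splitting the convolution, then dominated convergence in \eqref{eqG2s-G} and \eqref{eqn-barI}) matches the paper's, and the second and third stages are sound as written. But the step that actually carries the theorem — showing that $\int_0^\infty\overline{\mathfrak S}(s)\overline{\mathfrak F}(s)\,ds<\infty$ under the sharp hypothesis $R_0<\E[1/\gamma_*]$ — is not proved: you state that the ``self-consistency between \eqref{eqn-mfI1} and \eqref{eqG1}, monotonicity and Fatou'' would force $R_0\ge\E[1/\gamma_*]$ under the contradiction hypothesis, and your closing paragraph concedes that you only know the conceptual origin of the harmonic mean, not a derivation. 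Fatou applied to \eqref{eqG1} does not by itself produce $\E[1/\gamma_*]$; some explicit device is needed to convert the expectation of an exponential into an expectation of $1/\gamma_*$, and none is exhibited. This is a genuine gap, since everything else in the statement is routine once this bound is available.

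For comparison, the paper's mechanism is concrete and in two parts (after treating $\P(\gamma_*=0)>0$ separately, where \eqref{eqG1} directly gives $\int_0^t\overline{\mathfrak S}\,\overline{\mathfrak F}\le 1/\P(\gamma_*=0)$). Assuming $\int_0^\infty\overline{\mathfrak F}=\infty$: (i) integrating the renewal equation \eqref{eqG2s-F} and controlling the tail terms shows $\int_0^t\overline{\mathfrak S}\,\overline{\mathfrak F}\,/\int_0^t\overline{\mathfrak F}\to 1/R_0$; (ii) from \eqref{eqG1} and $\gamma\le\gamma_*$ one has $\int_0^u\E\bigl[\exp\bigl(-\gamma_*\int_s^u\overline{\mathfrak F}\bigr)\bigr]\overline{\mathfrak S}(s)\overline{\mathfrak F}(s)\,ds<1$; multiplying this by $\overline{\mathfrak F}(u)$, integrating in $u$ over $[0,t]$ and applying Fubini turns the exponential into $\bigl(1-\exp\bigl(-\gamma_*\int_s^t\overline{\mathfrak F}\bigr)\bigr)/\gamma_*$ inside the expectation; truncating on $\{\gamma_*\ge\epsilon\}$, letting $t\to\infty$ (where the divergence of $\int\overline{\mathfrak F}$ kills the exponential) and then $\epsilon\downarrow0$ by monotone convergence yields $\limsup_t\int_0^t\overline{\mathfrak S}\,\overline{\mathfrak F}\,/\int_0^t\overline{\mathfrak F}\le\bigl(\E[1/\gamma_*]\bigr)^{-1}$, contradicting (i) when $R_0<\E[1/\gamma_*]$. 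This multiplication-by-$\overline{\mathfrak F}$-and-integrate trick is precisely the ``average over $\gamma_*$ before inverting'' that your sketch gestures at; without it (or an equivalent argument), your Stage 1 does not go through, and with only crude bounds you would indeed land on $R_0<1$ or $R_0<1/\E[\gamma_*]$ rather than the harmonic-mean threshold.
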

	\begin{remark}
		Without the assumption on the monotonicity of the function $\gamma$, when $R_0<\E\left[\left(\sup_t\gamma(t)\right)^{-1}\right]$ the same proof as that of Theorem~\ref{thm-eqlm1} shows that as $t\to+\infty,\,\overline{\mathfrak F}(t)\to0$ and $\overline{I}(t)\to0.$  
	\end{remark}
	
	Note that in this theorem,  we do not assume $\E \left[1/\gamma_* \right]<+\infty$. However, we do assume that $R_0<\infty$, that is, $\overline\lambda(t)$ is integrable.  
	\begin{remark}
		In \cite[Proposition~$8.9$]{inaba_variable_2017} one can find a similar result for a model with demography.
	\end{remark}

	The case  $R_0 \geq \E[1/\gamma_*]$ is more complex. We make the following additional assumptions. 
	
	\begin{assumption}\label{hyp-ga}
		There exists  a non-negative random variable $t_\ast$ such that $\E\left[t_\ast\right]<+\infty$ and for $t\geq t_{\ast},\,\gamma(t)\geq\frac{\gamma_\ast}{2}$ a.s.  
	\end{assumption}

	\begin{assumption}\label{ASS2}
		$\gamma_\ast$ is deterministic, positive and for any $\delta \in (0,1)$,  there exists a deterministic $t_\delta>0$  such that 
		\begin{equation}
			\gamma_0(t_\delta)\wedge\gamma(t_\delta)\geq(1-\delta)\gamma_\ast\text{ almost surely.}
		\end{equation}
	\end{assumption}
	\begin{assumption}\label{ASS3}
		There exists a positive decreasing function $h$ such that $h(0)=1$ and for all $s,t\in\R_+,\;\overline{\lambda}(s+t)\geq h(s)\overline{\lambda}(t)$. The same holds for $\overline{\lambda}_0$. In addition,  $\overline{\lambda}_0$ is continuous and $\overline{\lambda}$ is of bounded total variation.
	\end{assumption}

	\begin{theorem}\label{conv1-f}
		\begin{enumerate}
			\item\label{conv1-f-i} Suppose that Assumptions \ref{AS-lambda}, \ref{hyp-ga-1} and~\ref{hyp-ga} hold and  $R_0> \E \left[ \frac{1}{\gamma_*} \right]$.  
			If there exists $(\overline{\mathfrak{S}}_\ast,\overline{\mathfrak{F}}_\ast)$ such that $(\overline{\mathfrak{S}}(t),\overline{\mathfrak{F}}(t))\xrightarrow[t\to+\infty]{}(\overline{\mathfrak{S}}_\ast,\overline{\mathfrak{F}}_\ast)$,
			then either $\overline{\mathfrak F}_\ast=0$,  or else
			\[\overline{\mathfrak{S}}_\ast=\frac{1}{R_0}\] and $\overline{\mathfrak{F}}_\ast$ is the unique positive solution of the equation 
			\begin{equation}\label{ibar1} 
				\int_{0}^{+\infty}\E\left[\exp\left(-\int_{0}^s \gamma\left(\frac{r}{\overline{\mathfrak{F}}_\ast}\right) dr \right)\right] ds=R_0.
			\end{equation}  	
			In the second case, $ (\overline{I}(t), \overline{U}(t)) \to (\overline{I}_*, \overline{U}_*) $ as $ t \to \infty $, where $ \overline{U}_* = 1-\overline{I}_* $ and
			\begin{equation} \label{eqn-I*-indicator-1} 
				\bar{I}_* =  \frac{\E\left[\eta \right] \overline{\mathfrak F}_*}{R_0}\,. 
			\end{equation}
			If $R_0= \E \left[ \frac{1}{\gamma_*} \right]$, the same statement holds but \eqref{ibar1} does not admit any positive solution, so, necessarily, $\overline{\mathfrak F}_\ast=0$ and thus $\bar{I}_*=0$.
			\item\label{thm-nostable} Assume in addition that Assumption~\ref{ASS2} and Assumption~\ref{ASS3} hold and that $\overline{\mathfrak{F}}(0)>0$.  Then there exists $c>0$ such that for all $t>0,\,\overline{\mathfrak{F}}(t)\geq c.$ In particular $ \overline{\mathfrak F}(t) $ cannot tend to zero as $ t \to \infty $.
		\end{enumerate}
		
	\end{theorem}
	
	\begin{remark}
		Assumption~\ref{hyp-ga} controls the time required for an individual's susceptibility to increase beyond some level,
		Assumption~\ref{ASS2} ensures that after some time the average susceptibility returns above $\frac{1}{R_0}$ if there are not too many re-infections and Assumption~\ref{ASS3} ensures that the force of infection does not decrease too rapidly.
	\end{remark}
	
	\begin{coro} \label{cor:endemic_eq_special_case}
		Suppose that Assumptions \ref{AS-lambda}, \ref{hyp-ga-1} and~\ref{hyp-ga} hold and that  $R_0> \E \left[ \frac{1}{\gamma_*} \right]$.  
		Assume that  $\gamma(t) = \gamma_\ast \indic{t \geq \zeta}$, where $\zeta$ is a random variable satisfying $\zeta\geq\eta$ almost surely and $\E[\zeta]<+\infty$ and $ \gamma_* $ is a random variable taking values in $ (0,1] $. 
		Then, 
		\begin{equation} \label{eqn-mfkF*-indicator} 
			\overline{\mathfrak F}_\ast=\frac{R_0-\E\left[\frac{1}{\gamma_*}\right]}{\E\left[\zeta\right]}, 
		\end{equation}
		is the solution of equation \eqref{ibar1} and
		\begin{equation} \label{eqn-I*-indicator} 
			\bar{I}_* =  \frac{\E[\eta] \overline{\mathfrak F}_*}{R_0} =  \frac{\E[\eta]}{\E[\zeta]}\left(1 - R_0^{-1} \E\left[\frac{1}{\gamma_*}\right]\right)
		\end{equation}
		where $R_0$ is given by \eqref{eqn-R0}. 
	\end{coro}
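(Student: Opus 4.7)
The plan is to apply Theorem~\ref{conv1-f} directly: under the stated hypotheses we will verify that $\overline{\mathfrak F}_\ast > 0$ is the unique solution of \eqref{ibar1} with $\gamma(t) = \gamma_\ast \indic{t \geq \zeta}$ inserted, carry out an explicit computation of the resulting integral, and then read off $\overline{I}_\ast$ from \eqref{eqn-I*-indicator-1}.

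First I would check that the ambient hypotheses are satisfied. Assumption~\ref{hyp-ga-1} requires $t \mapsto \gamma(t)$ to be non-decreasing, which is immediate since $\gamma(t) = \gamma_\ast \indic{t \geq \zeta}$ is a single upward jump at $\zeta$. Assumption~\ref{hyp-ga} asks for an integrable random time $t_\ast$ such that $\gamma(t) \geq \gamma_\ast/2$ for $t \geq t_\ast$; taking $t_\ast = \zeta$ works because $\E[\zeta] < +\infty$ by hypothesis. Since $R_0 > \E[1/\gamma_\ast]$, Theorem~\ref{conv1-f} applies, so $\overline{\mathfrak F}_\ast$ solves \eqref{ibar1}.

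Next I would evaluate \eqref{ibar1} explicitly. With $\gamma(r/\overline{\mathfrak F}_\ast) = \gamma_\ast \indic{r \geq \zeta \overline{\mathfrak F}_\ast}$, one has
\[
\int_0^s \gamma\!\left(\frac{r}{\overline{\mathfrak F}_\ast}\right) dr = \gamma_\ast \bigl(s - \zeta \overline{\mathfrak F}_\ast\bigr)_+.
\]
Swapping integral and expectation by Tonelli (all integrands non-negative), and integrating over $s \in [0, \zeta \overline{\mathfrak F}_\ast]$ and $s \in [\zeta \overline{\mathfrak F}_\ast, +\infty)$ separately, the inner integral equals $\zeta \overline{\mathfrak F}_\ast + 1/\gamma_\ast$ almost surely. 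Taking expectation, equation \eqref{ibar1} becomes
\[
\overline{\mathfrak F}_\ast\, \E[\zeta] + \E\!\left[\frac{1}{\gamma_\ast}\right] = R_0,
\]
which gives \eqref{eqn-mfkF*-indicator}. Note that the condition $R_0 > \E[1/\gamma_\ast]$ guarantees that this solution is strictly positive, consistently with Theorem~\ref{conv1-f}.

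Finally, substituting \eqref{eqn-mfkF*-indicator} into \eqref{eqn-I*-indicator-1} yields
\[
\overline{I}_\ast = \frac{\E[\eta]\, \overline{\mathfrak F}_\ast}{R_0} = \frac{\E[\eta]}{\E[\zeta]}\left(1 - R_0^{-1}\, \E\!\left[\frac{1}{\gamma_\ast}\right]\right),
\]
which is \eqref{eqn-I*-indicator}. There is no real obstacle here: the result is essentially a one-line calculation once the special form of $\gamma$ is plugged into the fixed point equation of Theorem~\ref{conv1-f}; the only care needed is to verify Assumption~\ref{hyp-ga} via the choice $t_\ast = \zeta$ and to justify the interchange of integration and expectation in \eqref{ibar1}.
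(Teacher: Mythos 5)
Your proposal is correct and follows essentially the same route as the paper: you plug the special form $\gamma(t)=\gamma_\ast\indic{t\geq\zeta}$ into \eqref{ibar1}, evaluate the integral by Tonelli/Fubini to get $\overline{\mathfrak F}_\ast\E[\zeta]+\E[1/\gamma_\ast]=R_0$, and then read off $\overline{I}_\ast$ from \eqref{eqn-I*-indicator-1}. The only differences are cosmetic (the paper first rescales $s$ by $\overline{\mathfrak F}_\ast$ before computing, and it does not spell out the consistency check of the special $\gamma$ with Assumptions~\ref{hyp-ga-1}--\ref{hyp-ga}, which you verify via $t_\ast=\zeta$).
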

	
	\begin{proof}
		In this case, equation \eqref{ibar1} becomes 
		\begin{align*}
			\int_{0}^{+\infty}\E\left[\exp\left(-\gamma_\ast\int_{0}^s    \indic{ \frac{r}{\overline{\mathfrak{F}}_\ast}  \ge \zeta}  dr \right)\right] ds=R_0.
		\end{align*}
		By a change of variables and Fubini's theorem, the left hand side is equal to
		\begin{multline*}
			\overline{\mathfrak{F}}_\ast\int_{0}^{+\infty}\E\left[\exp\left(-\overline{\mathfrak{F}}_\ast\gamma_\ast\int_{0}^s    \indic{r  \ge \zeta}  dr \right)\right] ds\\
			\begin{aligned}
				&= \overline{\mathfrak{F}}_\ast\int_{0}^{+\infty} \E \left[   \indic{ \zeta >s} + \indic{\zeta \leq s} \exp\left(-\overline{\mathfrak{F}}_\ast\gamma_\ast (s- \zeta)   \right)\right]   ds\\
				&= \overline{\mathfrak F}_*\E\left[\zeta\right] +  \overline{\mathfrak{F}}_\ast \E\left[ \int_{\zeta}^{+\infty} \exp\left(-\overline{\mathfrak{F}}_\ast\gamma_\ast (s-\zeta)  \right) ds \right]  \\
				&=\overline{\mathfrak F}_*\E\left[\zeta\right]+\E\left[\frac{1}{\gamma_*}\right].
			\end{aligned}
		\end{multline*}
		This gives the expression in \eqref{eqn-mfkF*-indicator}. 
	\end{proof} 
	
	\begin{remark} \label{remark:endemic_eq_classical_models}
		For the classical models discussed in Remark~\ref{rem-classical models}, the above allows us to recover previously known results.
		
		\begin{enumerate}
			\item In the SIS model, we have $\gamma_*=1$ and $ \eta = \zeta $ almost surely, so we obtain that $ \overline{I}(t) \to 0 $ as $ t \to \infty $ if $ R_0 \leq 1 $, and that, if $ R_0 > 1 $, the only other possible limit for $ \overline{I}(t) $ is given by
			\begin{align*}
				\overline{I}_* = \frac{\E[\eta]}{\E[\zeta]} \left(1 - R_0^{-1} \E\left[\frac{1}{\gamma_*}\right]\right) = 1-\frac{1}{R_0}.
			\end{align*}
			
			
			\item For the SIRS model, we have $ \gamma_* = 1 $ and $ \zeta = \eta + \theta $.
			As a result, applying Corollary~\ref{cor:endemic_eq_special_case}, we see that, when $ R_0 > 1 $, the only possible positive limit for $ \overline{I}(t) $ is
			\begin{align*}
				\overline{I}_* = \frac{\E[\eta]}{\E[\eta] + \E[\theta]} \left( 1-\frac{1}{R_0} \right).
			\end{align*}
			In the same way, we can also deduce that $ \overline{R}_* := \lim_{t\to+\infty} \overline{R}(t) $ is given by
			\begin{align*}
				\overline{R}_* = 1- \overline{\mathfrak S}_* - \overline{I}_* =  \frac{\E[\theta]}{\E[\eta] + \E[\theta]} \left( 1- \frac{1}{R_0} \right).
			\end{align*}
			We refer to Proposition 4.2 of \cite{PandPardoux-2020} for a previous derivation of this equilibrium in the particular case where $\lambda(t)=\beta\indic{t<\eta}$ and $\gamma(t)=\indic{t\geq\eta+\theta}$ with $\eta$ and $\theta$ independent, the laws of $\eta$ and $\theta$ being arbitrary.
		\end{enumerate}
		
		Our results in \eqref{eqn-mfkF*-indicator}--\eqref{eqn-I*-indicator} thus extend those for classical compartmental models. Note that $\gamma_*$ is random and takes values in $[0,1]$, indicating potentially partial susceptibility after recovery.
	\end{remark}

	For the SIS and SIRS models discussed in Remark~\ref{remark:endemic_eq_classical_models}, it is known that, if $ R_0 > 1 $ and $ \overline{I}(0) > 0 $, then $ \overline{I}(t) $ does indeed converge to the endemic equilibrium $ \overline{I}_* $ as $ t \to \infty $.
	We are not yet able to prove such a result for our general model.

	We make the following conjecture on the convergence to the equilibrium in the case $ R_0 > \E \left[1/\gamma_* \right] $.

	\begin{conjecture}\label{conj:convergence}
		Under Assumptions~\ref{AS-lambda}, \ref{hyp-ga-1}--\ref{ASS3}, if $ R_0 > \E \left[ \frac{1}{\gamma_*} \right] $ and $ \overline{\mathfrak F}(0) > 0 $, then 
		\begin{align*}
			(\overline{\mathfrak S}(t),\overline{\mathfrak F}(t)) \to (\overline{\mathfrak S}_*,\overline{\mathfrak F}_*) \quad as \quad t\to\infty,
		\end{align*}
		where $ \overline{\mathfrak S}_* = 1/R_0 $ and $ \overline{\mathfrak F}_* $ is the unique positive solution of \eqref{ibar1}.
	\end{conjecture}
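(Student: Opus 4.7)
The plan is to combine the a priori bound from Lemma~\ref{thm-nostable} with a compactness/$\omega$-limit set argument, reducing the convergence to a rigidity statement about entire bounded solutions of the integral equations \eqref{eqG2s-G}--\eqref{eqG2s-F}. First I would establish that $\overline{\mathfrak F}$ is uniformly bounded above (by $\lambda_*$) and below (by $c>0$ from Lemma~\ref{thm-nostable}), and that $\overline{\mathfrak S}$ takes values in $[0,1]$. Using Assumption~\ref{ASS3} and the fact that the kernel $\overline{\lambda}$ is integrable with $\int_0^\infty \overline{\lambda}= R_0<\infty$, one obtains uniform equicontinuity of the families $\{\overline{\mathfrak F}(\cdot+T)\}_{T\geq 0}$ and $\{\overline{\mathfrak S}(\cdot+T)\}_{T\geq 0}$ on compact sets. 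By Arzel\`a--Ascoli, any sequence $T_n\to+\infty$ admits a subsequence along which $(\overline{\mathfrak F}(\cdot+T_n),\overline{\mathfrak S}(\cdot+T_n))$ converges locally uniformly to some limit $(\mathfrak F_\infty,\mathfrak S_\infty)$ defined on all of $\R$.

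The next step is to show that this limiting pair is itself a bounded solution of the integral system extended to the whole real line. This follows by passing to the limit in \eqref{eqG2s-G-w}--\eqref{eqG2s-F} after splitting the convolution integral over $[-M,t]$ and $(-\infty,-M]$, and using integrability of $\overline{\lambda}$ to make the tail contribution uniformly small (the initial-condition terms vanish in the limit because $\overline{\lambda}_0$ and the indicator $\indic{t+T_n<\eta_0}$ contributions decay). At this stage $\mathfrak F_\infty$ is bounded below by $c$ by Lemma~\ref{thm-nostable}, bounded above by $\lambda_*$, and satisfies a renewal-type identity on $\R$ with kernel $\overline{\lambda}$.

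I would then argue that any such entire bounded solution must be constant, and in fact equal to $(\overline{\mathfrak F}_*,\overline{\mathfrak S}_*)$. Taking expected values of the extended renewal equation and using $\int_0^\infty \overline{\lambda}=R_0$ shows that, on the level of averages, $\mathfrak F_\infty = R_0 \cdot \mathfrak S_\infty \mathfrak F_\infty$ so $\mathfrak S_\infty \equiv 1/R_0$ in a time-averaged sense. To upgrade to pointwise convergence and rule out oscillations, one would attempt to build a Lyapunov functional adapted to the age-structured variant used by Inaba \cite{inaba_endemic_2016,inaba_variable_2017}, of the form $V(t) = \int_0^\infty w(a)\, g\!\big(\mathfrak F_\infty(t-a)/\overline{\mathfrak F}_*\big)\,da + H(\mathfrak S_\infty(t))$ with $g(x)=x-1-\log x$ and a weight $w$ chosen so that $dV/dt\leq 0$ along entire solutions; LaSalle's invariance principle would then force $\mathfrak F_\infty\equiv \overline{\mathfrak F}_*$. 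This rigidity plus uniqueness of the subsequential limit would close the argument by a standard sub-subsequence extraction, and the convergence of $\overline{\mathfrak S}$ would follow from \eqref{eqG2s-G}, using dominated convergence and the exponential decay of $\exp(-\int_s^t \gamma(r-s)\overline{\mathfrak F}(r)dr)$ provided by the lower bound $\overline{\mathfrak F}\geq c$ and Assumption~\ref{hyp-ga}.

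The main obstacle, and presumably the reason the authors leave this as a conjecture, is precisely the construction of such a Lyapunov functional in the present generality, where both $\lambda$ and $\gamma$ are random functions with very general laws and the system is non-local and non-Markovian. Inaba's Lyapunov arguments rely crucially on the PDE formulation with a fixed deterministic kernel; transferring them to the setting where $\gamma$ appears nonlinearly under an expectation in \eqref{eqG2s-G} seems to require either a convexity/Jensen-type comparison that is not immediately available, or a more delicate spectral analysis of the linearization around $(\overline{\mathfrak F}_*,\overline{\mathfrak S}_*)$ to exclude periodic entire solutions. Without resolving this rigidity question one can still prove Cesaro convergence of $\overline{\mathfrak F}$ to $\overline{\mathfrak F}_*$, which could serve as an intermediate weaker statement.
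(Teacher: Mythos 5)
The statement you are trying to prove is stated in the paper as a \emph{conjecture}, and your proposal does not close it: the decisive step --- showing that every bounded, positive entire solution of the translation-invariant limiting system must be the constant $(\overline{\mathfrak F}_*,1/R_0)$ --- is exactly the point you defer to a hoped-for Lyapunov functional ``of Inaba type,'' and you yourself concede that its construction is not available in this generality. Everything before that step (the bounds $c\leq\overline{\mathfrak F}\leq\lambda_*$ from Lemma~\ref{thm-nostable}, Arzel\`a--Ascoli applied to the time shifts, passage to the limit in the convolution equations so that the initial-condition terms vanish and one obtains a renewal-type system on all of $\R$) is sound, but it is also precisely what the paper itself does right after stating Conjecture~\ref{conj:convergence}: the authors show by the same shift/compactness argument that the conjecture is \emph{equivalent} to the uniqueness of positive bounded solutions of \eqref{eqx1}--\eqref{eqx2} on $\R$, note that the constant pair $(\tfrac{1}{R_0},\overline{\mathfrak F}_*)$ with $\overline{\mathfrak F}_*$ from \eqref{ibar1} is one such solution, and leave uniqueness open. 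So your ``proof'' reproduces the paper's reduction and then stops at the same open rigidity question; it is a reasonable roadmap, not a proof.

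Two further cautions on the parts you treat as routine. First, the equicontinuity of the shifted families is not automatic from integrability of $\overline{\lambda}$ alone; the paper's reduction works with locally uniform limits of the shifts and one must check (as the authors implicitly do via the continuity statement in Theorem~\ref{ExistUniq} and the boundedness of $\overline{\mathfrak S}\,\overline{\mathfrak F}$) that the relevant moduli of continuity are uniform in the shift, which requires some regularity of $\overline{\lambda}$, $\overline{\lambda}_0$ and of the expectation terms involving $\gamma_0,\gamma$, not just $\int_0^\infty\overline{\lambda}=R_0<\infty$. Second, your closing claim that ``one can still prove Ces\`aro convergence of $\overline{\mathfrak F}$ to $\overline{\mathfrak F}_*$'' is not justified by what you wrote: the averaging identity you invoke only gives $\int_0^t\overline{\mathfrak S}\,\overline{\mathfrak F}\big/\int_0^t\overline{\mathfrak F}\to 1/R_0$ (as in the proof of Theorem~\ref{thm-eqlm1}, Case 2, once $\int_0^\infty\overline{\mathfrak F}=\infty$ is known from Lemma~\ref{thm-nostable}); extracting from this a Ces\`aro limit of $\overline{\mathfrak F}$ equal to the specific value $\overline{\mathfrak F}_*$ solving \eqref{ibar1} would again require controlling the nonlinear expectation term in \eqref{eqG1}, i.e.\ essentially the same rigidity you have not established.
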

	
	This conjecture is equivalent to another. 
	Indeed, by applying the Arzel{\`a}-Ascoli Theorem to the set of functions $\lbrace(\tau_{j}\overline{\mathfrak S},\tau_{j}\overline{\mathfrak F}),\, j \in \N \rbrace$ where 
	$\tau_{j}x(t):=x(t+t_j)$ and $(t_j)_j\subset\R_+,\,t_j\to+\infty$ as $j\to+\infty$, there exists a subsequence of pairs $(\tau_{j}\overline{\mathfrak S},\tau_{j}\overline{\mathfrak F})$ denoted again $(\tau_{j}\overline{\mathfrak S},\tau_{j}\overline{\mathfrak F})$ such that $(x_j,y_j):=(\tau_{j}\overline{\mathfrak S},\tau_{j}\overline{\mathfrak F})\to(x,y)$ uniformly on compact sets as $j\to+\infty$.
	Note that the pair $(x_j(t),y_j(t))$ satisfies the following system of equations: for $t\geq-t_j$,
	\begin{equation} \label{eq:xj_yj}
		\left\lbrace
		\begin{aligned}
			x_j(t) &= \E\left[ \gamma_0(t+t_j) \exp\left(-\int_0^{t+t_j} \gamma_0(r) y_j(r) dr \right)\right]  \\
			&\qquad \qquad  + \int_{-t_j}^t \E\left[ \gamma(t-s) \exp\left(- \int_s^t \gamma(r-s) y_j(r) dr \right)\right]x_j(s) y_j(s)ds,\\
			y_j(t) &=  \bar{I}(0)\overline {\lambda}_0(t+t_j) + \int_{-t_j}^t \overline{\lambda}(t-s) x_j(s) y_j(s)ds.
		\end{aligned}
		\right.
	\end{equation}
	As a result, as the first terms of the right hand side of \eqref{eq:xj_yj} tend to zero when $j\to+\infty$, and $(x_j(t),y_j(t))\to(x(t),y(t))$ for all $t\in\R$, we deduce by the dominated convergence theorem that the pair $(x,y)$ satisfies the following set of equations,
	\begin{equation} \label{eq:y}
		\left\lbrace
		\begin{aligned}
			&y(t)=\int_{-\infty}^t \overline{\lambda}(t-s)x(s)y(s)ds, \\
			&\int_{-\infty}^{t}\E\left[\exp\left(-\int_{s}^{t}\gamma(r-s)y(r)dr\right)\right]x(s)y(s)ds=1.
		\end{aligned}
		\right.
	\end{equation}
	We can remark that the constant pair $(\frac{1}{R_0},\overline{\mathfrak F}_\ast)$ where $\overline{\mathfrak F}_\ast$ is the unique solution of \eqref{ibar1} is a solution of \eqref{eq:y}. 
	Hence if this solution is unique, all converging subsequences of $(\tau_{j}\overline{\mathfrak S},\tau_{j}\overline{\mathfrak F})$ have the same limit, from which we can easily conclude the convergence of $ (\overline{\mathfrak F}(t), \overline{\mathfrak S}(t)) $ as $ t \to \infty $.
	
	Thus, Conjecture~\ref{conj:convergence} is equivalent to the following.
	
	\begin{conjecture}
		Under Assumptions~\ref{hyp-ga-1}--\ref{ASS3}, if $ R_0 > \E \left[ \frac{1}{\gamma_*} \right] $ and $ \overline{\mathfrak F}(0) > 0 $, the set of equations \eqref{eq:y} has a unique positive and bounded solution on $\R$. 
	\end{conjecture}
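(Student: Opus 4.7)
The strategy is to show that any positive bounded solution $(x,y)$ of \eqref{eqx1}--\eqref{eqx2} on $\R$ must in fact be the constant pair $(1/R_0,\overline{\mathfrak F}_\ast)$, whose existence was verified in the discussion preceding the conjecture. Thus the whole problem reduces to proving that $y$ (and consequently $x$) is constant. Once $y\equiv y_\ast$, equation \eqref{eqx1} degenerates into $\int_0^\infty \overline{\lambda}(u) x(t-u)\,du=1$ for all $t\in\R$; a Choquet--Deny/Wiener-type argument (using that $\overline{\lambda}$ has no lattice support, a consequence of Assumption~\ref{ASS3}) then forces $x$ to be constant, after which \eqref{eqx2} pins $x\equiv 1/R_0$ and \eqref{ibar1}, whose right-hand side is strictly decreasing in $\overline{\mathfrak F}_\ast$, uniquely determines $y_\ast=\overline{\mathfrak F}_\ast$.

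The first step is to establish regularity and a strict positive lower bound for $y$. Continuity of $y$ follows from the convolution identity $y=\overline{\lambda}\ast(xy)$, and differentiating the constraint \eqref{eqx2} produces an analogous convolution representation for $x$. For the lower bound, one uses \eqref{eqx2} to control the mass of $xy$ from below on every semi-infinite interval $(-\infty,t]$, then pushes this through the convolution with $\overline{\lambda}$; Assumption~\ref{ASS3} prevents $\overline{\lambda}$ from being concentrated on a small set, so that $\inf_t y(t)>0$.

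Next, set $\underline{y}:=\inf_t y(t)$ and $\bar{y}:=\sup_t y(t)$, and along sequences $t_n^\pm$ realising these bounds extract, in the spirit of the compactness argument already sketched between the two conjectures, subsequential limits $(x^\pm,y^\pm)$ of the shifted pairs $(x(\cdot+t_n^\pm),y(\cdot+t_n^\pm))$ by Arzelà--Ascoli. These limits still solve \eqref{eqx1}--\eqref{eqx2} on $\R$, and $y^-$ (resp.\ $y^+$) attains its minimum $\underline{y}$ (resp.\ maximum $\bar{y}$) at $t=0$. Evaluating \eqref{eqx1} for $y^-$ at $t=0$ and using $y^-(-u)\geq\underline{y}$ yields
\begin{equation*}
\underline{y}=\int_0^\infty \overline{\lambda}(u)\,x^-(-u)\,y^-(-u)\,du \;\geq\; \underline{y}\int_0^\infty \overline{\lambda}(u)\,x^-(-u)\,du,
\end{equation*}
and symmetrically $\bar{y}\leq \bar{y}\int_0^\infty\overline{\lambda}(u)\,x^+(-u)\,du$. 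The target is to couple these one-sided estimates with \eqref{eqx2} evaluated at $y^\pm$ so as to force both inequalities into equalities; this would give $y^-(-u)=\underline{y}$ for $\overline{\lambda}$-a.e.\ $u>0$, and symmetrically for $y^+$. Iterating the convolution identity back in time and again using the non-lattice nature of $\overline{\lambda}$ propagates the constancy to all of $\R$, so $\underline{y}=\bar{y}$.

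The principal obstacle is precisely this rigidity step: turning the weak inequalities at the extremum into genuine equalities and then into global constancy of $y^\pm$. The coupling between $x$ and $y$ through the nonlinear constraint \eqref{eqx2} prevents a naive comparison principle, since a priori any drop in $y$ could be compensated by a compensating deviation of $x$. A plausible remedy would be to introduce a Liapunov-type functional built from the ``infection-age distribution'' implicit in \eqref{eqx2}, and show it is dissipated by the dynamics and vanishes exactly at the constant equilibrium; alternatively, a spectral analysis of the linearisation at $(1/R_0,\overline{\mathfrak F}_\ast)$ combined with the contracting effect of the gradual loss-of-immunity mechanism (Assumptions~\ref{hyp-ga-1} and \ref{ASS2}) could convert local uniqueness into global uniqueness by continuation. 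Executing either programme rigorously is, in my view, where the real difficulty of the conjecture lies.
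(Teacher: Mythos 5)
There is a genuine gap, and you essentially acknowledge it yourself: the statement you are addressing is left as a \emph{conjecture} in the paper precisely because the rigidity step you defer — upgrading the one-sided inequalities at the extrema of $y$ to equalities and propagating constancy over all of $\R$ — is the entire mathematical content of the claim. Your proposal reproduces the paper's own preparatory observations (the constant pair $(1/R_0,\overline{\mathfrak F}_\ast)$ solves \eqref{eqx1}--\eqref{eqx2}, and uniqueness of bounded positive solutions would follow from/imply convergence via the shifted-limit compactness argument), but when you reach the decisive point you only list candidate strategies (a Liapunov functional, a spectral/continuation argument) without executing either. As you note, the coupling between $x$ and $y$ through the nonlinear constraint \eqref{eqx2} blocks any naive comparison at the extremum: the inequality $\underline{y}\geq\underline{y}\int_0^\infty\overline{\lambda}(u)x^-(-u)\,du$ gives nothing unless you can show $\int_0^\infty\overline{\lambda}(u)x^-(-u)\,du\leq 1$, and \eqref{eqx2} does not yield such a bound pointwise; so the chain of implications is not closed. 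A sketch that stops at the acknowledged ``principal obstacle'' is not a proof.

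Two further steps you treat as routine are also not secure. First, the reduction ``once $y\equiv y_\ast$, the identity $\int_0^\infty\overline{\lambda}(u)x(t-u)\,du=1$ forces $x$ constant by a Choquet--Deny/Wiener argument'' is delicate: the equation is $(p\ast x)(t)=1/R_0$ with $p=\overline{\lambda}/R_0$ a probability density, whose bounded solutions are $1/R_0$ plus bounded solutions $h$ of $p\ast h=0$; ruling these out requires the Fourier transform of $p$ to have no nonzero real zeros, which is not a consequence of $\overline{\lambda}$ being non-lattice, and Assumption~\ref{ASS3} does not obviously supply it. Second, the claimed strict lower bound $\inf_t y(t)>0$ for an arbitrary positive bounded solution on the whole line is asserted via a vague ``push through the convolution'' argument; on $\R$ (with no initial condition $\overline{\mathfrak F}(0)>0$ available for the abstract solution $(x,y)$) this needs a genuine argument, e.g.\ an adaptation of the renewal-type bound used in the proof of Lemma~\ref{thm-nostable}, which you do not give. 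In short, the proposal is a reasonable road map, consistent with the discussion preceding the conjecture in the paper, but the uniqueness statement itself remains unproved.
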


	\bigskip
	
	\section{Relating to the Kermack and McKendrick PDEs for the SIRS model  } \label{sec-PDE}

	\subsection{The FLLN limits with a special set of susceptibility/infectivity functions and initial conditions}
	
	We consider the special family of susceptibility and infectivity functions:
	\[
	\lambda(t)=\wt{\lambda}(t)\indic{t<\eta}\, \quad\text{ and }\quad \gamma(t)=\wt{\gamma} (t-\eta)\indic{t>\eta}
	\]
	where $\wt{\lambda}(t)$ and $\wt{\gamma}(t)$ are deterministic functions such that $\wt{\lambda}(t)\leq\lambda_*$ and $\wt{\gamma}(t)\leq1$ for all $ t \geq 0 $ for some $ \lambda_* > 0 $ and the infected period $\eta$ is a random variable with cumulative distribution function $ F $, so that $ (\lambda, \gamma) $ satisfy Assumption~\ref{AS-lambda}. 
	Then we have $\overline{\lambda}(t) =\wt{\lambda}(t)F^c(t)$.  
	In addition, let $f$ be the density function of $F$ and $\mu_F(t) := \frac{f(t)}{F^c(t)}$ be the hazard rate function of $F$. 
	Then one can also write
	\begin{equation}\label{N2}
		F^c(t)=\exp\left(-\int_{0}^{t}\mu_F(s)ds\right),
	\end{equation} 
	
	We assume in the present Section~\ref{sec-PDE} that $\mu_F $ is bounded on $ \R_+ $.
	Hence $F^c(t)>0$ for all $ t \geq 0 $.
	
	We assume that the infectivity function of an initially infected individual is the same as that of a newly infected individual but shifted by the time elapsed since infection before time zero, that is, 
	\[
	\lambda_0(t)=\wt{\lambda}(t+\xi)\indic{t<\eta_0}
	\]
	where $\eta_0$ is the duration of the remaining infectious period after time zero, whose distribution depends on $\xi$, the age of infection at time $0$.
	To specify the distribution of $\xi$, we assume that there exists a function $\bar{I}(0,\tau)$ such that $\bar{I}(0) = \int_0^\infty \bar{I}(0,\tau) d \tau$, that is, $\bar{I}(0,\tau)$ is the  density of the distribution of $\bar{I}(0)$ over the ages of infection.  Then we can specify the distribution of $\xi$ as 
	\begin{equation}\label{eqp3}
		\mathbb{P}(\xi>x)=\frac{1}{\overline{I}(0)}\int_{x}^{+\infty}\overline{I}(0,r)dr.\end{equation}
	And the conditional distribution of $\eta_0$ given $\xi$ is given by 
	\begin{equation}
		\label{lc}
		\mathbb{P}(\eta_0>t\big|\xi) = \frac{F^c(\xi+t)}{F^c(\xi)}=\exp\left(-\int_{\xi}^{\xi+t}\mu_F(r)dr\right). \end{equation}
	It is then clear that
	\begin{align} \label{eqm-barI0-lambda0}
		\overline{\lambda}_0(t) = \E\left[\wt{\lambda}(t+\xi)\indic{t<\eta_0}\right] =  \frac{1}{\overline{I}(0) } \int_{0}^{+\infty}\wt\lambda(t+\tau)\overline{I}(0,\tau)\exp\left(-\int_{\tau}^{t+\tau}\mu_F(s)ds\right)d\tau\,. 
	\end{align}
	
	Next, to specify the initial susceptibility $\gamma_0(t)$ associated to each individual, we consider three groups of individuals at time zero, susceptible, infected, and recovered, whose respective proportions are $\overline{S}(0),\,\overline{I}(0)$ and $\overline{R}(0)$ with $\overline{S}(0)+\overline{I}(0)+\overline{R}(0)=1$.
	Note that we have combined the fully susceptible and initially recovered individuals as one group in the model description in Section \ref{sec-model}. Thus, the model discussed in this section is in fact a generalized SIRS model. 
	Moreover, we shall in this section consider that individuals in the $ R $ compartment have recovered from the disease but are not necessarily immune. In accordance with \cite{inaba2001kermack} (see Section~\ref{subsec:KMcK} below), once an individual recovers from the infection, after some potential immune period, the immunity is gradually lost, the individual may become infected again, and the process is repeated at each new infection with a new realization of the random infectivity and susceptibility functions. This means that, the susceptible individuals are those who have never been infected.
	For the initially fully susceptible individuals, their susceptibility $\gamma_0$ equals $1$. 
	For the initially infected individuals, their susceptibility starts to become positive after time $\eta_0$, which is the end of the infected period $\eta_0$, i.e. $ \gamma_{0}(t) = \indic{t \geq \eta_0} \wt{\gamma}(t-\eta_0) $.
	For the initially recovered individuals, their susceptibility depends on the time elapsed since the last recovery, which we denote by $\vartheta$, so that, for these individuals, $ \gamma_{0}(t) = \wt{\gamma}(t + \vartheta) $. 
	Let $\chi$ be a random variable indicating the group of an individual at time $t=0$, with the following law
	\[\mathcal{L}(\chi)=\overline{S}(0)\delta_{S}+\overline{I}(0)\delta_{I}+\overline{R}(0)\delta_{R}\,. \]
	Then the initial susceptibility $\gamma_0(t)$ of an individual can be written as 
	\[
	\gamma_{0}(t)=\indic{\chi=S}+\wt{\gamma}(t-\eta_0)\indic{t\geq\eta_0,\chi=I}+\wt{\gamma}(t+\vartheta)\indic{\chi=R}\,. 
	\]
	To specify the distribution of $\vartheta$, we assume that there exists a function $\bar{R}(0,\theta)$ such that $\bar{R}(0) = \int_0^\infty \bar{R}(0,\theta)d \theta$, that is,  $\bar{R}(0,\theta)$ is the density of the distribution of $\bar{R}(0)$ over the age of recovery. Then we can specify the distribution of $\vartheta$ by 
	\begin{equation}\label{eqp2}
		\mathbb{P}(\vartheta>x)=\frac{1}{\overline{R}(0)}\int_{x}^{+\infty}\overline{R}(0,r)dr.\end{equation}
	Thus, we have 
	\begin{multline*}
		\E \left[\gamma_{0}(t)\exp \left( - \int_{0}^{t} \gamma_0(r) \overline{\mathfrak F}(r) dr \right)\right] =  \overline{S}(0)  \exp \left( - \int_{0}^{t}  \overline{\mathfrak F}(r) dr \right) \\
		+ \int_{0}^{t} \int_{0}^{\infty} \wt{\gamma}(t-\tau) \mu_F(\tau+r) \overline{I}(0,r) \exp\left( - \int_{r}^{\tau+r} \mu_F(s) ds \right) \\ \times \exp\left( - \int_{\tau}^{t} \wt{\gamma}(s-\tau) \overline{\mathfrak{F}}(s) ds \right) dr d\tau  \\
		+ \int_{0}^{\infty} \wt{\gamma}(t+\tau) \overline{R}(0,\tau) \exp\left( - \int_{0}^{t} \wt{\gamma}(s+\tau) \overline{\mathfrak{F}}(s) ds \right) d\tau\,. 
	\end{multline*}
	Note that, concerning the second term, we have to condition upon $\xi$ in order to compute its expression. 
	
	We denote by $\overline{S}(t)$ the proportion of susceptible individuals who have never been infected, $\overline{I}(t)$ the proportion of infected individuals and $\overline{R}(t)$ the proportion of recovered individuals at time $t$.
	
	We obtain the following expression of the limit $(\overline{\mathfrak S},\overline{\mathfrak F})$. 
	
	\begin{prop}We assume that $\|\mu_F\|_\infty<\infty$.
		For the generalized SIRS model described above, the limit $(\overline{\mathfrak S},\overline{\mathfrak F})$ from Theorem~\ref{thm-FLLN} solves the following system of integral equations: 
		\begin{align} \label{eqn-mfkS-SIRS}
			\overline{\mathfrak{S}}(t)&= \overline{S}(0)  \exp \left( - \int_{0}^{t}  \overline{\mathfrak F}(r) dr \right)  \\
			& \quad + \int_{0}^{t}\int_{0}^{\infty}\wt{\gamma}(t-\tau)\frac{f(\tau+r)}{F^c(\tau)}\overline{I}(0,r) \exp\left(-\int_{\tau}^{t}\wt{\gamma}(s-\tau)\overline{\mathfrak{F}}(s)ds\right)dr d\tau  \nonumber \\
			& \quad + \int_{0}^{\infty}\wt{\gamma}(t+\tau)\overline{R}(0,\tau)\exp\left(-\int_{0}^{t}\wt{\gamma}(s+\tau)\overline{\mathfrak{F}}(s)ds\right)d\tau \nonumber
			\\
			& \quad +\int_{0}^{t}\Bigg(\int_{r}^{t} \wt{\gamma}(t-\tau)f(\tau-r)
			\exp\left(-\int_{\tau}^{t}\wt{\gamma}(s-\tau)\overline{\mathfrak{F}}(s)ds\right)d\tau\Bigg)\overline{\mathfrak{S}}(r)\overline{\mathfrak{F}}(r)dr,\non
		\end{align}
		and 
		\begin{align}  \label{eqn-mfkF-SIRS}
			\overline{\mathfrak{F}}(t)&=\int_{t}^{+\infty}\wt{\lambda}(\tau)\overline{I}(0,\tau-t)\frac{F^c(\tau)}{F^c(\tau-t)}d\tau +\int_{0}^{t}\wt{\lambda}(t-\tau)\overline{\mathfrak{S}}(\tau)\overline{\mathfrak{F}}(\tau)F^c(t-\tau)d\tau.
		\end{align}
		In addition, 
		\begin{align*}
			\overline{S}(t) &= \overline{S}(0) \exp\left( - \int_{0}^{t} \overline{\mathfrak{F}}(r) dr \right), \\
			\overline{I}(t) &= \int_{0}^{+\infty}\frac{F^c(t+\tau)}{F^c(\tau)}\overline{I}(0,\tau)d\tau +\int_{0}^{t}F^c(t-s)\overline{\mathfrak{S}}(s)\overline{\mathfrak{F}}(s)ds,\\
			\overline{R}(t) &= \int_{0}^{+\infty} \exp\left( - \int_{0}^{t} \wt{\gamma}(\tau + r) \mathfrak{\overline{F}}(r) dr \right) \overline{R}(0,\tau) d\tau \\
			&\qquad+ \int_{0}^{+\infty} \int_{0}^{t} \exp\left( - \int_{s}^{t} \wt{\gamma}(r-s) \overline{\mathfrak{F}}(r) dr \right) \mu_F(\tau + s) \frac{F^c(\tau+s)}{F^c(\tau)} ds \overline{I}(0,\tau) d\tau \\
			&\qquad+ \int_{0}^{t} \int_{s}^{t} \exp \left( - \int_{u}^{t} \wt{\gamma}(r-u) \overline{\mathfrak{F}}(r) dr \right) \mu_F(u-s) \\
			&\hspace{4cm} \times \exp\left( - \int_{0}^{u-s} \mu_F(r) dr \right) du \overline{\mathfrak{S}}(s) \overline{\mathfrak{F}}(s) ds.
		\end{align*}
	\end{prop} 
	
	\medskip
	\subsection{Connection with the Kermack and McKendrick PDE model} \label{subsec:KMcK}
	
	In \cite{inaba2001kermack}, the model introduced by Kermack and McKendrick in \cite{kermack_contributions_1932,kermack_contributions_1933} was reformulated as follows.
	Let $ \bar{S}(t) $ denote the proportion of susceptible individuals who have never been infected, let $ \bar{I}(t,\tau) $ denote the density of infectious individuals with infection-age $ \tau \geq 0 $ and $ \bar{R}(t,\theta) $ the density of recovered individuals with recovery-age $ \theta \geq 0 $.
	Then, given an initial condition $ (\bar{S}(0), \bar{I}(0,\cdot), \bar{R}(0,\cdot)) $ such that
	\begin{equation*}
		\bar{S}(0) + \int_{0}^{\infty} \bar{I}(0,\tau) d\tau + \int_{0}^{\infty} \bar{R}(0,\theta) d\theta = 1,
	\end{equation*}
	the system evolves according to the following set of partial differential equations:
	%
	%
	\begin{equation} \label{KMcK_model}
		\left\lbrace
		\begin{aligned}
			&\frac{d \overline{S}}{d t}(t) = -\overline{S}(t) \int_{0}^{+\infty} \wt{\lambda}(\tau) \overline{I}(t,\tau) d\tau \\
			&\frac{\partial \overline{I}}{\partial t}(t,\tau) + \frac{\partial \overline{I}}{\partial \tau}(t,\tau) = -\mu_F(\tau) \overline{I}(t,\tau) \\
			&\frac{\partial \overline{R}}{\partial t}(t,\tau) + \frac{\partial \overline{R}}{\partial \tau}(t,\tau) = -\overline{R}(t,\tau) \wt{\gamma}(\tau) \int_{0}^{+\infty} \wt{\lambda}(r) \overline{I}(t,r) dr \\
			&\overline{I}(t,0) = \left(\overline{S}(t) + \int_{0}^{+\infty} \wt{\gamma}(\theta) \overline{R}(t,\theta) d\theta \right) \int_{0}^{+\infty} \wt{\lambda} (\tau)\overline{I}(t,\tau)d\tau
			\\
			&\overline{R}(t,0) = \int_{0}^{+\infty} \mu_F(\tau) \overline{I}(t,\tau) d\tau.
		\end{aligned}
		\right.
	\end{equation}
	
	We will use the following notion of a solution of the system of ODE-PDEs \eqref{KMcK_model}. For us, a solution is an element 
	$(\bar{S}(t), \bar{I}(t,\cdot), \bar{R}(t,\cdot), t \geq 0)\in\mathcal{C}\left(\R_+,\R_+\times L^1(\R_+)\times L^1(\R_+)\right)$ satisfying \eqref{KMcK_model}, 
	where the derivatives are taken in the distributional sense, and \eqref{KMcK_model} is understood as follows. If $(\bar{S}(t), \bar{I}(t,\cdot), \bar{R}(t,\cdot)$ is a solution, then the continuous function $\bar{S}(t)$ is the unique solution of a linear equation which is given explicitly in terms of $\bar{I}$ and $\wt\lambda$, and the equation is satisfied pointwise. It follows from the second equation that $s\mapsto\bar{I}(t+s,s)$ also solves a linear  equation,  whose explicit solution is at least continuous, hence we can define $\bar{I}(t,0)=\lim_{s\to0}\bar{I}(t+s,s)$, and it is this quantity which is assumed to satisfy the boundary condition stated on the fourth line of  \eqref{KMcK_model}. A similar argument can be applied to $\bar{R}$. 
	Note that it is essential that $\wt{\lambda}$ and $\wt{\gamma}$ be bounded, as well as $\mu_F$. This last point will be an assumption in our statements below.
	The classical reference for systems of PDEs of the type of \eqref{KMcK_model} is the book of Webb \cite{webb1985theory}, which establishes existence and uniqueness, but with a notion of solution which is a bit tricky, and we prefer to avoid presenting it. 
	We will show below how to associate to any solution of \eqref{KMcK_model} the solution to the system of integral equations \eqref{eqn-mfkS-SIRS}-\eqref{eqn-mfkF-SIRS},
	and vice versa how to construct a solution to \eqref{KMcK_model} from the solution of \eqref{eqn-mfkS-SIRS}-\eqref{eqn-mfkF-SIRS}. 
	This establishes the connection between our results, in the particular case of the model described in the present section, and those of Kermack and McKendrick.
	We shall then deduce existence and uniqueness of a solution of \eqref{KMcK_model} as a corollary.
	
	\begin{prop}\label{Th-kmk}
		We assume that $\|\mu_F\|_\infty<\infty$. If  $ (\bar{S}(t), \bar{I}(t,\cdot), \bar{R}(t,\cdot), t \geq 0) $ is a solution of the system \eqref{KMcK_model}, and $ (\overline{\mathfrak{S}}(\cdot), \overline{\mathfrak{F}}(\cdot)) $ is given by
		\begin{equation} \label{eqn-mfk-SIRS-two}
			\left\{
			\begin{aligned}
				\overline{\mathfrak{S}}(t)&=\overline{S}(t)+\int_{0}^{+\infty}\wt{\gamma}(\tau)\overline{R}(t,\tau)d\tau, \\
				\overline{\mathfrak{F}}(t)&=\int_{0}^{+\infty}\wt{\lambda}(\tau)\overline{I}(t,\tau)d\tau\,,
			\end{aligned}\right.
		\end{equation}
		then $ (\overline{\mathfrak{S}}(\cdot), \overline{\mathfrak{F}}(\cdot)) $ solves \eqref{eqn-mfkS-SIRS}-\eqref{eqn-mfkF-SIRS}.
		Conversely, given  $ (\overline{\mathfrak{S}}(\cdot), \overline{\mathfrak{F}}(\cdot)) $ the unique solution of \eqref{eqn-mfkS-SIRS}-\eqref{eqn-mfkF-SIRS}, the following is a solution to \eqref{KMcK_model}.
		\begin{equation} \label{PDE_from_Volterra}
			\left\lbrace
			\begin{aligned}
				\overline{S}(t) &=\overline{S}(0)\exp\left(-\int_{0}^{t}\overline{\mathfrak{F}}(s)ds\right),\\
				\overline{I}(t,\tau)&=\overline{I}(0,\tau-t)\frac{F^c(\tau)}{F^c(\tau-t)}\indic{\tau>t} + \overline{\mathfrak F}(t-\tau)\overline{\mathfrak S}(t-\tau)F^c(\tau)\indic{t\geq\tau},\\
				\overline{R}(t,\tau)&=\overline{R}(t-\tau,0)\exp\left(-\int_{0}^{\tau}\wt{\gamma}(s)\overline{\mathfrak{F}}(t+s-\tau)ds\right)\indic{\tau< t} \\
				&\hspace{1cm} +\overline{R}(0,\tau-t)\exp\left(-\int_{\tau-t}^{\tau}\wt{\gamma}(s)\overline{\mathfrak{F}}(t-\tau+s)ds\right)\indic{\tau\geq t},\\
				\overline{R}(t,0)&=\int_{0}^{+\infty}\overline{I}(0,\tau)\frac{f(t+\tau)}{F^c(\tau)}d\tau +\int_{0}^{t}f(t-\tau)\overline{\mathfrak{S}}(\tau)\overline{\mathfrak{F}}(\tau)d\tau,
			\end{aligned}
			\right.
		\end{equation}
		with $F$ given by \eqref{N2} and $f$ is the density of $F$. 
	\end{prop} 
	
	\begin{proof}
		Assuming that $ (\bar{S}(t), \bar{I}(t,\cdot), \bar{R}(t,\cdot), t \geq 0) $ is a solution of the system \eqref{KMcK_model},  we now show that $(\overline{\mathfrak{S}}(\cdot), \overline{\mathfrak{F}}(\cdot))$ given by \eqref{eqn-mfk-SIRS-two}
		satisfies \eqref{eqn-mfkS-SIRS} and \eqref{eqn-mfkF-SIRS}.
		Integrating $\overline{I}$ and $\overline{R}$ along the characteristics, which are the parallels of the first diagonal of 
		$\R_+^2$, we obtain:
		\begin{equation}\label{KMcK_model-chara}
			\left\lbrace
			\begin{aligned}
				&\overline{S}(t)=\overline{S}(0)\exp\left(-\int_{0}^{t}\int_{0}^{+\infty}\wt{\lambda}(\tau)\overline{I}(s,\tau)d\tau ds\right),\\\\
				&\overline{I}(t,\tau)=\overline{I}(0,\tau-t)\frac{F^c(\tau)}{F^c(\tau-t)}\indic{\tau>t} + \overline{I}(t-\tau,0)F^c(\tau)\indic{t\geq\tau},\\\\
				&\overline{R}(t,\tau)=\overline{R}(t-\tau,0)\exp\left(-\int_{0}^{\tau}\wt{\gamma}(s)\int_{0}^{+\infty}\wt{\lambda}(r)\overline{I}(t+s-\tau,r)drds\right)\indic{\tau< t}   \\
				&\hspace{2cm} +\overline{R}(0,\tau-t)\exp\left(-\int_{\tau-t}^{\tau}\wt{\gamma}(s)\int_{0}^{+\infty}\wt{\lambda}(r)\overline{I}(t-\tau+s,r)drds\right)\indic{\tau\geq t},\\\\
				&\overline{I}(t,0) = \left(\overline{S}(t) + \int_{0}^{+\infty} \wt{\gamma}(\theta) \overline{R}(t,\theta) d\theta \right) \int_{0}^{+\infty} \wt{\lambda} (\tau)\overline{I}(t,\tau)d\tau,
				\\\\
				&\overline{R}(t,0)=\int_{0}^{+\infty}\overline{I}(0,\tau)\frac{f(t+\tau)}{F^c(\tau)}d\tau +\int_{0}^{t}f(t-\tau)\overline{I}(\tau,0)d\tau.
			\end{aligned}
			\right.
		\end{equation}
		By \eqref{eqn-mfk-SIRS-two}, this can be written
		\begin{equation*}
			\left\lbrace
			\begin{aligned}
				&\overline{S}(t)=\overline{S}(0)\exp\left(-\int_{0}^{t} \overline{\mathfrak{F}}(s) ds\right),\\\\
				&\overline{I}(t,\tau)=\overline{I}(0,\tau-t)\frac{F^c(\tau)}{F^c(\tau-t)}\indic{\tau>t} + \overline{I}(t-\tau,0)F^c(\tau)\indic{t\geq\tau},\\\\
				&\overline{R}(t,\tau)=\overline{R}(t-\tau,0)\exp\left(-\int_{0}^{\tau}\wt{\gamma}(s)\overline{\mathfrak{F}}(t+s-\tau)ds\right)\indic{\tau< t}   \\
				&\hspace{2cm} +\overline{R}(0,\tau-t)\exp\left(-\int_{\tau-t}^{\tau}\wt{\gamma}(s)\overline{\mathfrak{F}}(t-\tau+s)ds\right)\indic{\tau\geq t},\\\\
				&\overline{I}(t,0) = \overline{\mathfrak{S}}(t) \overline{\mathfrak{F}}(t)
				\\\\
				&\overline{R}(t,0)=\int_{0}^{+\infty}\overline{I}(0,\tau)\frac{f(t+\tau)}{F^c(\tau)}d\tau +\int_{0}^{t}f(t-\tau)\overline{I}(\tau,0)d\tau,
			\end{aligned}
			\right.
		\end{equation*}
		Substituting those expressions for $ \overline{S}(t) $, $ \overline{R}(t,\tau) $ and $ \overline{I}(t,\tau) $ in  \eqref{eqn-mfk-SIRS-two} using the above expressions yields \eqref{eqn-mfkS-SIRS} and \eqref{eqn-mfkF-SIRS}.	
		
		Conversely let $ (\overline{\mathfrak{S}}(\cdot), \overline{\mathfrak{F}}(\cdot)) $ be a solution of the set of equations \eqref{eqn-mfkS-SIRS}-\eqref{eqn-mfkF-SIRS}. We now show that $ (\bar{S}(t), \bar{I}(t,\cdot), \bar{R}(t,\cdot), t \geq 0) $ given by \eqref{PDE_from_Volterra} solves the system \eqref{KMcK_model}. It suffices in fact to show that $ (\bar{S}(t), \bar{I}(t,\cdot), \bar{R}(t,\cdot), t \geq 0)$ satisfies \eqref{KMcK_model-chara} (see Theorem~2.2 in \cite{webb1985theory}).
		Comparing \eqref{PDE_from_Volterra} and \eqref{KMcK_model-chara}, we see that it is enough to show that, if $ \overline{I}(t,\tau) $, $ \overline{S}(t) $ and $ \overline{R}(t,\tau) $ are given by \eqref{PDE_from_Volterra}, then
		\begin{equation} \label{recover_F}
			\int_{0}^{+\infty} \wt{\lambda}(\tau) \overline{I}(t,\tau) d\tau = \overline{\mathfrak{F}}(t),
		\end{equation}
		and
		\begin{equation} \label{recover_S}
			\overline{S}(t) + \int_{0}^{+\infty} \wt{\gamma}(\tau) \overline{R}(t,\tau) d \tau = \overline{\mathfrak{S}}(t).
		\end{equation}
		Indeed, replacing $ \overline{I}(t,\tau) $, $ \overline{S}(t) $ and $ \overline{R}(t,\tau) $ by their expressions in \eqref{PDE_from_Volterra}, we obtain
		\begin{multline*}
			\int_{0}^{+\infty} \wt{\lambda}(\tau) \overline{I}(t,\tau) d\tau = \int_{0}^{+\infty} \wt{\lambda}(\tau+t) \overline{I}(0,\tau) \frac{F^c(\tau+t)}{F^c(\tau)} d\tau \\ + \int_{0}^{t} \wt{\lambda}(\tau) F^c(\tau) \overline{\mathfrak{F}}(t-\tau) \overline{\mathfrak{S}}(t-\tau) d\tau,
		\end{multline*}
		and
		\begin{multline*}
			\overline{S}(t) + \int_{0}^{+\infty} \wt{\gamma}(\tau) \overline{R}(t,\tau) d\tau = \overline{S}(0) \exp\left( -\int_{0}^{+\infty} \overline{\mathfrak{F}}(s) ds \right) \\+ \int_{0}^{+\infty} \wt{\gamma}(\tau+t) \overline{R}(0,\tau) \exp\left( -\int_{\tau}^{\tau+t} \wt{\gamma}(s) \overline{\mathfrak{F}}(s-\tau) ds \right) d\tau \\+ \int_{0}^{t} \wt{\gamma}(\tau)  \exp\left( -\int_{0}^{\tau} \wt{\gamma}(s) \overline{\mathfrak{F}}(t+s-\tau) ds \right) \overline{R}(t-\tau,0) d\tau.
		\end{multline*}
		Replacing $ \overline{R}(t-\tau,0) $ by its expression in \eqref{PDE_from_Volterra} and using the fact that $ (\overline{\mathfrak{S}}, \overline{\mathfrak{F}}) $ solves \eqref{eqn-mfkS-SIRS}-\eqref{eqn-mfkF-SIRS}, we obtain \eqref{recover_F} and \eqref{recover_S}.
		
		This completes the proof of the equivalence. 
	\end{proof}
	
	\begin{coro}
		We assume that $\|\mu_F\|_\infty<\infty$.  Then the system \eqref{KMcK_model} has a unique solution.
	\end{coro}
	\begin{proof}
		From the solution of  \eqref{eqn-mfkS-SIRS}-\eqref{eqn-mfkF-SIRS}, we define a solution of \eqref{KMcK_model} with the help of \eqref{PDE_from_Volterra}, which implies the existence of a solution to \eqref{KMcK_model}. 
		Now let $(\bar{S}^1,\bar{I}^1, \bar{R}^1)$ and 
		$(\bar{S}^2,\bar{I}^2, \bar{R}^2)$ be two solutions of \eqref{KMcK_model} with the same initial condition at time $t=0$. Through \eqref{eqn-mfk-SIRS-two}, they define the same solution of  \eqref{eqn-mfkS-SIRS}-\eqref{eqn-mfkF-SIRS}. Hence for all $t\ge0$,
		\[ \int_0^\infty \wt{\lambda}(\tau)\bar{I}^1(t,\tau)d\tau=\int_0^\infty \wt{\lambda}(\tau)\bar{I}^2(t,\tau)d\tau.\]
		Consequently from the first line of \eqref{PDE_from_Volterra}, $\bar{S}^1(t)=\bar{S}^2(t)$, for all 
		$t\ge0$. This, combined with the first line of \eqref{eqn-mfk-SIRS-two},  implies that for all $t\ge0$,
		\[\int_0^\infty\wt{\gamma}(\tau)\bar{R}^1(t,\tau)d\tau=\int_0^\infty\wt{\gamma}(\tau)\bar{R}^2(t,\tau)d\tau\,.\]
		The above three facts combined with the fourth line of \eqref{PDE_from_Volterra} imply that $\bar{I}^1(t,0)=\bar{I}^2(t,0)$
		for all $t\ge0$, and hence also $\bar{R}^1(t,0)=\bar{R}^2(t,0)$ for all $t\ge0$. The second and third lines of \eqref{PDE_from_Volterra}
		allow us to conclude that $\bar{I}^1(t,\tau)=\bar{I}^2(t,\tau)$ and $\bar{R}^1(t,\tau)=\bar{R}^2(t,\tau)$ for all $t,\tau\ge0$.
		Hence uniqueness.
	\end{proof}

	\bigskip 
	
	\section{Proofs for the FLLN} \label{sec-proofs-FLLN}
	
	In this section, we prove the FLLN. We start with the proof of Lemma~\ref{rem-1}.
	
	\begin{proof}[Proof of Lemma~\ref{rem-1}]
		We can note that, if we multiply the equation for $x(t)$ by $y(t)$, we obtain
		\begin{multline*}
			y(t) x(t) = \E \left[ y(t) \gamma_0(t) \exp \left( -\int_0^t \gamma_0(r) y(r) dr \right) \right] \\+ \int_0^t \E \left[ y(t) \gamma(t-s) \exp \left( -\int_s^t \gamma(r-s) y(r) dr \right) \right] x(s) y(s) ds.
		\end{multline*}
		As a result,
		\begin{multline*}
			\frac{d}{dt} \left( \E \left[ \exp \left( - \int_{0}^{t} \gamma_0(r) y(r) dr \right)\right] \right.\\ \left. + \int_{0}^{t} \E \left[ \exp \left( - \int_{s}^{t} \gamma(r-s) y(r) dr \right) \right] x(s) y(s) ds \right) = 0.
		\end{multline*}
		Note that this function may not be differentiable everywhere since there may be an at most countable set of points where its right and left derivatives do not coincide. Nevertheless this function is absolutely continuous and since its derivative is equal to zero almost everywhere, it is constant.   
		
		Hence, integrating between 0 and t, we obtain the result.
	\end{proof}
	We next prove Theorem~\ref{ExistUniq}.
	\begin{proof}[Proof of Theorem~\ref{ExistUniq}] 
		Let $(x,y)\in D^2_+$ be a solution to the set of equation \eqref{LLN_xy}. 
		From Assumption~\ref{AS-lambda}, $\gamma_0(t)\leq1$ and $\gamma(t-s)\leq1$. Hence the combination of \eqref{LLN_xy} and \eqref{eqG1} implies that $x(t)\leq1.$ Moreover, from Assumption \ref{AS-lambda} if $\lambda(t)>0 \,(\,\lambda_{0}(t)>0)$ then $\gamma(s)=0\, (\,\gamma_0(s)=0)$ for $0\leq s\leq t$, we have
		\begin{align*}
			y(t)&=\overline{I}(0)\overline{\lambda}_0(t)+\int_{0}^{t}\overline{\lambda}(t-s)x(s)y(s)ds\\
			&=\E\left[\lambda_{0}(t)\exp\left(-\int_0^t \gamma_0(r) y(r) dr \right)\right]\\
			&\hspace{2cm}+\int_{0}^{t}\E\left[\lambda(t-s)\exp\left(-\int_s^t \gamma(r-s) y(r) dr \right)\right]x(s)y(s)ds\\
			&\leq\lambda^\ast,
	\end{align*}
	where the last inequality follows from \eqref{eqG1} and the fact that $\lambda_0(t), \lambda(t-s)\le\lambda^\ast$.
	Consequently, if $(x,y)$ solves \eqref{LLN_xy}, then for $t\geq0,\,x(t)\leq1$ and $y(t)\le \lambda^\ast$.
	
	Suppose now that we have two solutions $(x^1,y^1)$ and $(x^2,y^2)$ of equations \eqref{LLN_xy}. Then we have 
	\begin{align*}
		x^1(t)-x^2(t)&=\E\left[ \gamma_0(t)\left( \exp\left(-\int_0^t \gamma_0(r) y^1(r) dr \right)-\exp\left(-\int_0^t \gamma_0(r) y^2(r) dr \right)\right)\right]\\
		&\qquad+\int_0^t \E\Bigg[ \gamma(t-s)\Bigg( \exp\left(- \int_s^t \gamma(r-s) y^1(r) dr \right)x^1(s) y^1(s)\\
		&\qquad \qquad \qquad -\exp\left(- \int_s^t \gamma(r-s) y^2(r) dr \right)x^2(s) y^2(s)\Bigg)\Bigg]ds\,,\\
		y^1(t)-y^2(t)&=\int_0^t\overline{\lambda}(t-s)\left[x^1(s) y^1(s)-x^2(s) y^2(s)\right]ds\,.
	\end{align*}
	We first deduce from the second relation, taking into account that for $i=1,2$, $x^i(s)\le 1$ and 
	$y^i(s)\le\lambda^\ast$, that for any $t>0$, 
	\begin{align}\label{estim1}
		\left| y^1(t)-y^2(t)\right|&\le\lambda^\ast \int_0^t\left[\left|x^1(s)-x^2(s)\right|\left|y^1(s)\right|+
		\left| y^1(s)-y^2(s) \right|\left|x^2(s)\right|\right]ds\nonumber
		\\&\leq \lambda^\ast\max(\lambda^\ast,1)\int_0^t\left[\left|x^1(s)-x^2(s)\right|+\left| y^1(s)-y^2(s) \right|\right]ds. 
	\end{align}
	We now exploit the first relation. Since $\left|\exp(-a)-\exp(-b)\right|\leq |a-b|,\,\forall a,b\in\mathbb{R}_+,\, \gamma\leq1$ and $x^1y^1\leq\lambda_*$, we have for $0\leq t\leq T$,
	\begin{align*}
		\left|x^1(t)-x^2(t)\right|
		&\leq\int_0^t\left|y^1(s)-y^2(s)\right|ds\\
		&\qquad+\int_0^t \E\Bigg[\Bigg| \exp\left(- \int_s^t \gamma(r-s) y^1(r) dr \right)x^1(s) y^1(s)\\
		&\qquad\qquad \qquad -\exp\left(- \int_s^t \gamma(r-s) y^2(r) dr \right)x^2(s) y^2(s)\Bigg|\Bigg]ds
		\\&\leq\int_0^t\left|y^1(s)-y^2(s)\right|ds\\
		&\qquad
		+\int_0^t\E\Bigg[\bigg|\exp\left(- \int_s^t \gamma(r-s) y^1(r) dr \right) \\
		&\qquad \qquad \qquad \qquad \qquad  -\exp\left(- \int_s^t \gamma(r-s) y^2(r) dr \right)\bigg|\Bigg]x^1(s) y^1(s)ds\\
		&\qquad+\int_{0}^{t}\E\left[\exp\left(- \int_s^t \gamma(r-s) y^2(r) dr \right)\right]\left|x^1(s)y^1(s)-x^2(s)y^2(s)\right|ds\\
		&\leq\int_0^t\left|y^1(s)-y^2(s)\right|ds
		+T\lambda_*\int_0^t\left|y^1(s)-y^2(s)\right|ds\\
		&\qquad+\int_{0}^{t}\left|x^1(s)y^1(s)-x^2(s)y^2(s)\right|ds.
	\end{align*}
	Hence, again, given $T>0$, there exists a constant $C$ such that for any $0\le t\le T$,
	\begin{align}\label{estim2}
		\left|x^1(t)-x^2(t)\right|&\le 
		C\int_0^t\left[\left|x^1(s)-x^2(s)\right|
		+\left| y^1(s)-y^2(s) \right|\right]ds\,.
	\end{align}
	Uniqueness follows from \eqref{estim1}, \eqref{estim2} and the Gronwall Lemma.
	
	Now local existence follows by an approximation procedure, which exploits the estimates \eqref{estim1} and \eqref{estim2}. Indeed, Picard iteration procedure works for an existence of integral equation as for  the more technical ODEs. For detail, see Brunner \cite{brunner_volterra_2017}. Global existence then follows 
	from the above a priori estimates, which forbids explosion. 
	
	Note that, it easy to check that the map 
	\begin{multline*}
		t\mapsto\left(\int_{0}^{t}\E\left[\gamma(t-s)\exp\left(-\int_s^t \gamma(r-s) \overline{\mathfrak{F}}(r) dr \right)\right]\overline{\mathfrak{S}}(s)\overline{\mathfrak{F}}(s)ds, \right. \\ \left. \int_{0}^{t}\overline{\lambda}(t-s)\overline{\mathfrak{S}}(s)\overline{\mathfrak{F}}(s)ds\right),
	\end{multline*}
	is continuous.
	Therefore, to conclude the continuity of the map $t\mapsto\left(\overline{\mathfrak{S}}(t),\overline{\mathfrak{F}}(t)\right)$ solution of equations \eqref{LLN_xy}, it remains to show that the map
	\begin{equation}t\mapsto \left(\E\left[\gamma_0(t)\exp\left(-\int_0^t \gamma_0(r) \overline{\mathfrak{F}}(r) dr \right)\right],\overline{\lambda}_0(t)\right),\label{eq-init-c}\end{equation}
	is continuous.
	Indeed, as $\gamma_0$ has bounded variation, there exist two non-decreasing functions $\gamma_0^1$ and $\gamma_0^2$ such that $\gamma_0=\gamma_0^1-\gamma_0^2.$ Thus for $t\geq t_0,$ as $\gamma_0\leq1,\,\overline{\mathfrak{F}}\leq\lambda_*$,
	\begin{multline*}
		\left|\E\left[\gamma_0(t)\exp\left(-\int_{0}^{t}\gamma_{0}(r)\overline{\mathfrak{F}}(r)dr\right)\right]-\E\left[\gamma_0(t_0)\exp\left(-\int_{0}^{t_0}\gamma_{0}(r)\overline{\mathfrak{F}}(r)dr\right)\right]\right|\\
		\leq\E\left[\left|\gamma_0^1(t)-\gamma_0^1(t_0)\right|\right]+2\E\left[\int_{t_0}^{t}\gamma_0(r)\overline{\mathfrak{F}}(r)dr\right]+\E\left[\left|\gamma_0^2(t)-\gamma_0^2(t_0)\right|\right]\\
		=\left|\E\left[\gamma_0^1(t)\right]-\E\left[\gamma_0^1(t_0)\right]\right|+2\lambda_*|t-t_0|+\left|\E\left[\gamma_0^2(t)\right]-\E\left[\gamma_0^2(t_0)\right]\right|\\
		\leq2\left|\E\left[\gamma_0(t)\right]-\E\left[\gamma_0(t_0)\right]\right|+2\lambda_*|t-t_0|,
	\end{multline*}
	where the third line follows from the fact that $\gamma_0^1$ and $\gamma_0^2$ are non-decreasing.
	Hence as the map $t\mapsto \left(\E\left[\gamma_0(t)\right],\overline{\lambda}_0(t)\right)$ is continuous, it follows that \eqref{eq-init-c} is continuous. Therefore, the map $t\mapsto\left(\overline{\mathfrak{S}}(t),\overline{\mathfrak{F}}(t)\right)$ is continuous. Theorem \ref{ExistUniq} is established.
\end{proof} 



\medskip

We next prove Theorem~\ref{thm-FLLN}. We first construct a system of stochastic equations driven by Poisson random measures (PRMs), and then use an approach of the type of propagation of chaos as in \cite{sznitman1991topics}. 

Take $m\in D_+$, let $(\lambda_0,\gamma_0)$ be a random variable taking values in $D_+^2$ and $(\lambda_i,\gamma_i)_{i\ge 1}$ be an independent collection of i.i.d. random variables taking values in $D_+^2$.
Also let $Q$ be a standard Poisson random measure on $\R^2_+$, independent of the previous random variables. 
We define for $t\geq0$, 
\begin{equation}
	\left\{
	\begin{aligned}
		A^{(m)}(t)&:=\int_{0}^{t}\int_{0}^{+\infty}\indic{u\leq\Upsilon^{(m)}(r^-)}Q(dr,du)\\
		\Upsilon^{(m)}(t)&:=\gamma_{A^{(m)}(t)}(\varsigma^{(m)}(t))m(t)\label{suite_A}
	\end{aligned}\right.
\end{equation}
where $\varsigma^{(m)}$ is defined in the same manner as $\varsigma^N_1$ with $ A^{(m)} $ instead of $ A^N_1$.

Let 
$$\overline{\mathfrak{F}}^{(m)}(t)=\mathbb{E}\left[\lambda_{A^{(m)}(t)}(\varsigma^{(m)}(t))\right], \quad \text{and} \quad\overline{\mathfrak{S}}^{(m)}(t)=\mathbb{E}\left[\gamma_{A^{(m)}(t)}(\varsigma^{(m)}(t))\right].$$

\begin{lemma}\label{exist_A}
	There exists a unique function $m^*\in D_+$ such that
	\[\overline{\mathfrak{F}}^{(m^*)}=m^*.\] 
	Moreover, $(\overline{\mathfrak S}^{(m^\ast)},\overline{\mathfrak{F}}^{(m^*)})$ solves the set of equations \eqref{LLN_xy}.
\end{lemma}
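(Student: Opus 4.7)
The plan is to reduce the fixed-point problem to the existence-uniqueness result of Theorem~\ref{ExistUniq}. The key step is to derive closed-form integral representations for $\overline{\mathfrak{F}}^{(m)}$ and $\overline{\mathfrak{S}}^{(m)}$ in terms of $m$ and the laws of $(\lambda,\gamma)$ and $(\lambda_0,\gamma_0)$. Once these are in hand, a fixed point $m^\ast$ of $\Phi:m\mapsto\overline{\mathfrak{F}}^{(m)}$ will be visibly equivalent to the solution $(x,y)$ of \eqref{LLNeq1}--\eqref{eqn-mfI1}, so both existence and uniqueness will follow from Theorem~\ref{ExistUniq}.

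First, I would decompose according to the jump times $0=T_0<T_1<T_2<\cdots$ of $A^{(m)}$. Conditioning on $T_n=s$ and on the realisation of $\gamma_n$, the probability that $T_{n+1}>t$ equals $\exp(-\int_s^t\gamma_n(r-s)m(r)\,dr)$. Combining this with the fact that $(\lambda_n,\gamma_n)_{n\geq 1}$ are i.i.d.\ and independent of $T_n$, and identifying the mean jump measure of $A^{(m)}$ with $m(s)\overline{\mathfrak{S}}^{(m)}(s)\,ds$ (which is the expectation of the predictable intensity $\Upsilon^{(m)}$), one obtains
\begin{align*}
\overline{\mathfrak{F}}^{(m)}(t) &= \E\!\left[\lambda_0(t)\exp\!\left(-\int_0^t \gamma_0(r)m(r)\,dr\right)\right] \\
& \qquad + \int_0^t \E\!\left[\lambda(t-s)\exp\!\left(-\int_s^t \gamma(r-s)m(r)\,dr\right)\right]\overline{\mathfrak{S}}^{(m)}(s)m(s)\,ds,
\end{align*}
and an entirely analogous computation yields
\begin{align*}
\overline{\mathfrak{S}}^{(m)}(t) &= \E\!\left[\gamma_0(t)\exp\!\left(-\int_0^t \gamma_0(r)m(r)\,dr\right)\right] \\
& \qquad + \int_0^t \E\!\left[\gamma(t-s)\exp\!\left(-\int_s^t \gamma(r-s)m(r)\,dr\right)\right]\overline{\mathfrak{S}}^{(m)}(s)m(s)\,ds.
\end{align*}
The disjoint-support condition \eqref{eqq10} forces the product $\lambda_0(t)\exp(-\int_0^t\gamma_0(r)m(r)\,dr)$ to equal $\lambda_0(t)$ almost surely, and similarly for $\lambda(t-s)$, so the first identity collapses to $\overline{\mathfrak{F}}^{(m)}(t)=\bar I(0)\bar\lambda_0(t)+\int_0^t\bar\lambda(t-s)\overline{\mathfrak{S}}^{(m)}(s)m(s)\,ds$. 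The $\overline{\mathfrak{S}}^{(m)}$-equation is a linear Volterra equation with a $[0,1]$-valued kernel and locally bounded forcing, hence admits a unique $[0,1]$-valued solution on every finite interval by Gronwall.

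Given these representations, existence of a fixed point is immediate. Let $(x,y)$ be the unique solution of \eqref{LLNeq1}--\eqref{eqn-mfI1} provided by Theorem~\ref{ExistUniq} and set $m^\ast:=y$. Substituting $m^\ast$ into the linear equation derived above for $\overline{\mathfrak{S}}^{(m)}$, one sees that both $x$ and $\overline{\mathfrak{S}}^{(m^\ast)}$ satisfy the same equation, so $\overline{\mathfrak{S}}^{(m^\ast)}=x$ by its uniqueness; plugging this back into the formula for $\overline{\mathfrak{F}}^{(m)}$ yields $\overline{\mathfrak{F}}^{(m^\ast)}=y=m^\ast$. Uniqueness of $m^\ast$ follows by reversing the argument: for any fixed point $\widetilde m$ of $\Phi$, the pair $(\overline{\mathfrak{S}}^{(\widetilde m)},\widetilde m)$ solves \eqref{LLNeq1}--\eqref{eqn-mfI1}, hence equals $(x,y)$ by Theorem~\ref{ExistUniq}, so $\widetilde m=y=m^\ast$. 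The second assertion of the lemma is built into this construction.

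The main technical obstacle is the derivation of the two integral representations of $\overline{\mathfrak{F}}^{(m)}$ and $\overline{\mathfrak{S}}^{(m)}$: one must carefully handle the possible correlation between $\lambda_n$ and $\gamma_n$ when taking the expectation of a product involving $\lambda_n(t-T_n)$ and the survival factor $\exp(-\int_{T_n}^t\gamma_n(r-T_n)m(r)\,dr)$, and then justify exchanging expectation with the summation over $n\geq 1$ so as to identify $\sum_n\mu_n(ds)$ with the mean intensity measure $m(s)\overline{\mathfrak{S}}^{(m)}(s)\,ds$. Both points are standard consequences of counting-process theory with stochastic intensity, but the bookkeeping must be executed with some care before the fixed-point argument reduces to invoking Theorem~\ref{ExistUniq} and a one-line Gronwall estimate.
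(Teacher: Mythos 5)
Your proposal is correct and follows essentially the same route as the paper: decompose along the jump times of $A^{(m)}$, use the conditional survival probability $\exp\left(-\int_s^t\gamma(r-s)m(r)\,dr\right)$, the independence of the marks $(\lambda_i,\gamma_i)$ from the jump times, and the identification of the mean jump measure of $A^{(m)}$ with $\overline{\mathfrak S}^{(m)}(s)m(s)\,ds$ to obtain the two integral representations, and then reduce the fixed-point problem to Theorem~\ref{ExistUniq}. The only cosmetic differences are that the paper invokes \eqref{eqq10} at the outset to write $\overline{\mathfrak F}^{(m)}(t)=\overline{I}(0)\overline{\lambda}_0(t)+\int_0^t\overline{\lambda}(t-s)m(s)\overline{\mathfrak S}^{(m)}(s)\,ds$ directly (rather than first deriving the representation with the survival factor and then dropping it), and that you make explicit the linear Volterra uniqueness step for $\overline{\mathfrak S}^{(m^\ast)}$ that underlies the paper's ``if and only if'' reduction.
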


\begin{proof}	
	Let us denote the jump times of the process $A^{(m)}$ by $(\tau_{i}^{(m)})_{i\geq0}$ with $\tau^{(m)}_0=0$. 	From Assumption~\ref{AS-lambda}, for $t\geq\tau_i^{(m)},$ if $\lambda_i(t-\tau_i^{(m)})\neq0,$ we have $$\int_{\tau_i^{(m)}}^{t}\gamma_i(r-\tau_i^{(m)})\overline{\mathfrak F}^{(m)}(r)dr=0,$$
	hence $A^{(m)}(t)=A^{(m)}(\tau_{i}^{(m)})=i$.
	By contradiction this also implies that $ \lambda_j(t-\tau_j^{(m)}) = 0 $ for all $ 0 \leq j \leq i-1 $.
	As a result,
	\begin{equation*}
		\lambda_{A^{(m)}(t)}(\varsigma^{(m)}(t)) = \lambda_{0}(t)+\sum_{i=1}^{A^{(m)}(t)}\lambda_{i}(t-\tau_{i}^{(m)}),
	\end{equation*}
	for all $ t \geq 0 $, almost surely.
	We thus obtain that
	\begin{align*}
		\overline{\mathfrak{F}}^{(m)}(t)&=\mathbb{E}\left[\lambda_{A^{(m)}(t)}(\varsigma^{(m)}(t))\right]\\
		&=\E\left[\lambda_{0}(t)+\sum_{i=1}^{A^{(m)}(t)}\lambda_{i}(t-\tau_{i}^{(m)})\right]
	\end{align*}
	Also recall that, by the definition of $ \eta_0 $, $ \lambda_0(t) = \lambda_0(t) \indic{\eta_0 > 0} $, and that $\overline{\lambda}_0(t) = \E\big[\lambda_{0}(t)\big|\eta_0>0\big]$ and $\overline{I}(0)=\mathbb{P}(\eta_0>0)$.
	Therefore,
	\begin{align*}
		\overline{\mathfrak{F}}^{(m)}(t)&=\E\left[\lambda_0(t)\left|\eta_0>0\right.\right]\mathbb{P}\left(\eta_0>0\right)+\sum_{i\geq1}\E\left[\lambda_{i}(t-\tau_{i}^{(m)})\indic{\tau_{i}^{(m)}\leq t}\right]\\
		&=\overline{I}(0)\overline{\lambda}_0(t)+\sum_{i\geq1}\E\left[\lambda_{i}(t-\tau_{i}^{(m)})\indic{\tau_{i}^{(m)}\leq t}\right],
	\end{align*}
	and as $\lambda_{i}$ and $\tau_{i}^{(m)}$ are independent, we further obtain that 
	\begin{align*}
		\overline{\mathfrak{F}}^{(m)}(t)&=\overline{I}(0)\overline{\lambda}_0(t)+\sum_{i\geq1}\E\Big[\overline{\lambda}(t-\tau_{i}^{(m)})\indic{\tau_{i}^{(m)}\leq t}\Big]\nonumber\\
		&=\overline{I}(0)\overline{\lambda}_0(t)+\E\left[\int_{0}^t\overline{\lambda}(t-s)A^{(m)}(ds)\right].
	\end{align*}
	Note that the process $t \mapsto A^{(m)}(t)-\int_{0}^{t}\Upsilon^{(m)}(s)ds$ is a martingale. 
	The integral of a bounded deterministic  function over a compact interval with respect to such a martingale has zero expectation, hence
	\begin{equation} \label{zero_expectation}
		\E\left[\int_{0}^{t}\overline{\lambda}(t-s)\left(A^{(m)}(ds)-\Upsilon^{(m)}(s)ds\right)\right]=0.
	\end{equation}
	As a result,	
	\begin{equation}\label{eqn0}
		\overline{\mathfrak{F}}^{(m)}(t)=\overline{I}(0)\overline{\lambda}_0(t)+\int_{0}^t\overline{\lambda}(t-s)m(s)\overline{\mathfrak{S}}^{(m)}(s)ds.
	\end{equation}
	In addition, 
	\begin{equation*}
		\gamma_{A^{(m)}(t)}(\varsigma^{(m)}(t))=\gamma_{0}(t)\indic{A^{(m)}(t)=0}+\sum_{i=1}^{+\infty}\gamma_{i}(t-\tau_{i}^{(m)})\indic{\tau_{i}^{(m)}\leq t}\indic{A^{(m)}(t)=i}\,\, .
	\end{equation*}
	
	Let 
	\[\mathcal{F}_t=\sigma\left(\left\{(\lambda_{i})_{0\leq i\leq A^{(m)}(t)},(\gamma_{i})_{0\leq i\leq A^{(m)}(t)},A^{(m)}(t'),0\leq t'\leq t\right\}\right).\] 
	Since $Q_{\big|_{]\tau^{(m)}_i,t]}}$ is independent of $\mathcal{F}_{\tau_{i}^{(m)}}$ (setting $ ]\tau^{(m)}_i, t] = \emptyset $ when $ \tau^{(m)}_i \geq t $),  we have 
	\begin{align*}
		&\mathbb{P}\left(A^{(m)}(t)=i\Big|\mathcal{F}_{\tau_{i}^{(m)}}\right)\mathds{1}_{\tau^{(m)}_i\leq t} \\
		&=\mathbb{P}\left(Q\left(\{(s,u)\in\R^2_+,\,\tau_{i}^{(m)}< s\le t,\,\gamma_{i}((s-\tau_{i}^{(m)})^-)m(s^-)\geq u\}\right)=0\Big|\mathcal{F}_{\tau_{i}^{(m)}}\right)\mathds{1}_{\tau^{(m)}_i\leq t}\\
		&=\exp\left(-\int_{\tau_{i}^{(m)}}^{t}\gamma_{i}(r-\tau_{i}^{(m)})m(r)dr\right)\mathds{1}_{\tau^{(m)}_i\leq t}.
	\end{align*}
	Thus, since $\gamma_0$ and $Q$ are also independent,  we obtain 
	\begin{align*}
		\overline{\mathfrak{S}}^{(m)}(t)&=\E\left[\gamma_{0}(t)\indic{A^{(m)}(t)=0}\right]+\sum_{i\geq1}\E\left[\gamma_{i}(t-\tau_{i}^{(m)})\indic{\tau_{i}^{(m)}\leq t}\mathbb{P}\left(A^{(m)}(t)=i\Big|\mathcal{F}_{\tau_{i}^{(m)}}\right)\right]\\
		&=\E\left[\gamma_{0}(t)\exp\left(-\int_{0}^{t}\gamma_{0}(r)m(r)dr\right)\right] \\
		& \quad +\sum_{i\geq1}\E\left[\gamma_{i}(t-\tau_{i}^{(m)})\indic{\tau_{i}^{(m)}\leq t}\exp\left(-\int_{\tau_{i}^{(m)}}^{t}\gamma_{i}(r-\tau_{i}^{(m)})m(r)dr\right)\right]\,. 
	\end{align*}
	Moreover, since $\gamma_i$ and  $\tau_{i}^{(m)}$ are independent, recalling that the law of $\gamma$ is denoted by $\mu$,  we further obtain 
	\begin{multline*}
		\overline{\mathfrak{S}}^{(m)}(t)=\E\left[\gamma_{0}(t)\exp\left(-\int_{0}^{t}\gamma_{0}(r)m(r)dr\right)\right] \\ +\sum_{i\geq1}\E\left[\int_{D}\gamma(t-\tau_{i}^{(m)})\exp\left(-\int_{\tau_{i}^{(m)}}^{t}\gamma(r-\tau_{i}^{(m)})m(r)dr\right)\mu(d\gamma)\indic{\tau_{i}^{(m)}\leq t}\right]\,. 
	\end{multline*}
	
	In addition, as $(\tau_i^{(m)})_i$ are the jump time of $A^{(m)}$, by Fubini's theorem, we have 
	\begin{align}\label{eqn2}
		\overline{\mathfrak{S}}^{(m)}(t)&=\E\left[\gamma_{0}(t)\exp\left(-\int_{0}^{t}\gamma_{0}(r)m(r)dr\right)\right]\nonumber\\&\hspace*{1cm}+\int_{D}\E\left[\sum_{i=1}^{A^{(m)}(t)}\gamma(t-\tau_{i}^{(m)})\exp\left(-\int_{\tau_{i}^{(m)}}^{t}\gamma(r-\tau_{i}^{(m)})m(r)dr\right)\right]\mu(d\gamma)\nonumber\\
		&=\E\left[\gamma_{0}(t)\exp\left(-\int_{0}^{t}\gamma_{0}(r)m(r)dr\right)\right]\nonumber\\&\hspace*{1cm}+\int_{D}\E\left[\int_{0}^t\gamma(t-s)\exp\left(-\int_{s}^{t}\gamma(r-s)m(r)dr\right)A^{(m)}(ds)\right]\mu(d\gamma).
	\end{align}
	For any given $\gamma\in D$, we define 
	\[g_\gamma(s,t):=\gamma(t-s)\exp\left(-\int_{s}^{t}\gamma(r-s)m(r)dr\right).\]
	Since \[A^{(m)}(t)=\int_{0}^{t}\int_{0}^{+\infty}\indic{u\le \Upsilon^{(m)}(s^-)}Q(ds,du),\]
	by the same argument as the one used in \eqref{zero_expectation} we obtain 
	\begin{align}\label{eqn3}
		\E\left[\int_{0}^{t}g_\gamma(s,t)A^{(m)}(ds)\right]
		&=\E\left[\int_{0}^{t}g_\gamma(s,t)\Upsilon^{(m)}(s)ds\right]\nonumber\\
		&=\int_{0}^{t}g_\gamma(s,t)\E\left[\Upsilon^{(m)}(s)\right]ds\nonumber\\
		&=\int_{0}^{t}g_\gamma(s,t)\overline{\mathfrak{S}}^{(m)}(s)m(s)ds.
	\end{align} 
	Then, from \eqref{eqn2} and \eqref{eqn3},  we deduce that
	\begin{align}\label{eqn4}
		\overline{\mathfrak{S}}^{(m)}(t)
		&=\E\left[\gamma_{0}(t)\exp\left(-\int_{0}^{t}\gamma_{0}(r)m(r)dr\right)\right]\nonumber\\&
		\hspace*{1.2cm}+\int_{0}^{t}\int_{D}\gamma(t-s)\exp\left(-\int_{s}^{t}\gamma(r-s)m(r)dr\right)\mu(d\gamma)m(s)\overline{\mathfrak{S}}^{(m)}(s)ds\nonumber\\
		&=\E\left[\gamma_{0}(t)\exp\left(-\int_{0}^{t}\gamma_{0}(r)m(r)dr\right)\right]\nonumber\\&
		\hspace*{1.2cm}+\int_{0}^{t}\E\left[\gamma(t-s)\exp\left(-\int_{s}^{t}\gamma(r-s)m(r)dr\right)\right]m(s)\overline{\mathfrak{S}}^{(m)}(s)ds.
	\end{align}
	
	Hence from \eqref{eqn0} and \eqref{eqn4}, $\overline{\mathfrak{F}}^{(m)}=m$ if and only if $(\overline{\mathfrak{S}}^{(m)},m)$ solves \eqref{LLN_xy}. Consequently, by Theorem~\ref{ExistUniq},  there exists a unique element $m^*\in D_+$ such that $\overline{\mathfrak{F}}^{(m^*)}=m^*$ and that $(\overline{\mathfrak{S}}^{(m^*)},m^*)$ solves \eqref{LLN_xy}.
	This concludes the proof of the lemma.
\end{proof}

\medskip

We next consider the sequence $(Q_k)_{k\geq1}$ of Poisson random measures introduced in Section~\ref{sec-model} and for each $k\geq1,$ we define the process $\{A_k(t),\,t\geq0\}$: 
\[A_k(t)=\int_{0}^{t}\int_{0}^{+\infty}\indic{u\leq\Upsilon_k(r^-)}Q_k(dr,du),\] where
\[\Upsilon_k(t)=\gamma_{k,A_k(t)}(\varsigma_{k}(t))\overline{\mathfrak{F}}(t),\] 
and
$\varsigma_k$ is defined in the same manner as $\varsigma^N_1$ with $ A_k$ instead of $ A^N_1$. 
(This definition follows a similar idea to Lemma~\ref{exist_A} as in \cite{chevallier2017mean}.)
In this definition we use the same $(\lambda_{k,i},\gamma_{k,i},Q_k)$ as in the definition of the model in Section~\ref{sec-model}. 
Moreover, since  $\left((\lambda_{k,i})_i,(\gamma_{k,i})_i,Q_k\right)_{k\geq1}$ are i.i.d,  $(A_k)_{k\geq1}$ are also i.i.d.
\begin{remark}\label{rq-6.2}
	From Lemma~\ref{exist_A} we have
	\begin{equation*}
		\overline{\mathfrak F}(t)=\E\left[\lambda_{1,A_1(t)}(\varsigma_{1}(t))\right]\text{ and }	\overline{\mathfrak S}(t)=\E\left[\gamma_{1,A_1(t)}(\varsigma_{1}(t))\right].
	\end{equation*} 
\end{remark}
Now for each $k\geq1$, we compare the process $A^N_k(t),\,t\geq0$ with the process $A_k(t),\,t\geq0$.

\begin{lemma}\label{lem_inq}For $k\in\mathbb{N}$ and $T\geq0$, 
	\begin{equation}
		\mathbb{E}\left[\sup_{t\in[0,T]}\left|A^N_k(t)-A_k(t)\right|\right]\leq\int_{0}^{T}\mathbb{E}\Big[\left|\Upsilon^N_k(t)-\Upsilon_k(t)\right|\Big]dt=:\delta^N(T)\label{eqA}
	\end{equation}
	and 
	\begin{equation*}
		\mathbb{E}\left[\sup_{t\in[0,T]}\left|\varsigma^N_k(t)-\varsigma_k(t)\right|\right]\leq T\delta^N(T).
	\end{equation*}
	Moreover, 
	\begin{equation}\delta^N(T)\leq\frac{\lambda^*}{\sqrt{N}}T\exp(2\lambda^*T).\label{eqdelta}\end{equation}
\end{lemma}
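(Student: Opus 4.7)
\emph{Proof plan.} I would prove the three estimates in sequence. For \eqref{eqA}, since $A^N_k$ and $A_k$ are driven by the \emph{same} Poisson random measure $Q_k$, I can write
\begin{equation*}
\bigl| A^N_k(t) - A_k(t) \bigr| \leq \int_0^t \int_0^\infty \bigl| \indic{u \leq \Upsilon^N_k(r^-)} - \indic{u \leq \Upsilon_k(r^-)} \bigr| Q_k(dr,du) =: \mathcal{D}_k(t),
\end{equation*}
where the right-hand side is non-decreasing in $t$, so $\sup_{t \leq T}|A^N_k(t)-A_k(t)| \leq \mathcal{D}_k(T)$. Taking expectations, using that $Q_k$ has intensity $du\,dr$ together with the identity $\int_0^\infty |\indic{u \leq a}-\indic{u \leq b}|du = |a-b|$, gives $\E[\mathcal{D}_k(T)] = \delta^N(T)$, which is the first inequality.

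For the $\varsigma$-estimate, the key observation is that on the event $\{\mathcal{D}_k(T) = 0\}$, every atom of $Q_k$ in $[0,T]\times\R_+$ either triggers a jump of \emph{both} $A^N_k$ and $A_k$ or of neither, so their jump times coincide exactly and $\varsigma^N_k \equiv \varsigma_k$ on $[0,T]$. Off this event, both $\varsigma^N_k(t)$ and $\varsigma_k(t)$ lie in $[0,T]$, so their difference is at most $T$. Hence
\begin{equation*}
\sup_{t \in [0,T]} |\varsigma^N_k(t) - \varsigma_k(t)| \leq T\, \indic{\mathcal{D}_k(T) \geq 1},
\end{equation*}
and Markov's inequality combined with the first step gives the desired bound.

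For \eqref{eqdelta} I would close a Gronwall loop. Decomposing
\begin{equation*}
\Upsilon^N_k(t) - \Upsilon_k(t) = \bigl[\gamma_{k,A^N_k(t)}(\varsigma^N_k(t)) - \gamma_{k,A_k(t)}(\varsigma_k(t))\bigr] \overline{\mathfrak F}^N(t) + \gamma_{k,A_k(t)}(\varsigma_k(t)) \bigl[\overline{\mathfrak F}^N(t) - \overline{\mathfrak F}(t)\bigr],
\end{equation*}
and using $\gamma \in [0,1]$, $\overline{\mathfrak F}^N \leq \lambda^*$, the first summand is dominated by $\lambda^*\, \indic{\mathcal{D}_k(t) \geq 1}$ (the two evaluations coincide whenever the jumps of $A^N_k$ and $A_k$ agree on $[0,t]$), contributing at most $\lambda^* \delta^N(t)$ in expectation by the first step. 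For the second summand I use Remark~\ref{rq-6.2} to write $\overline{\mathfrak F}^N(t) - \overline{\mathfrak F}(t)$ as $\frac{1}{N}\sum_j [\lambda_{j,A^N_j(t)}(\varsigma^N_j(t)) - \lambda_{j,A_j(t)}(\varsigma_j(t))]$, handled by the same coupling argument and exchangeability to produce $\lambda^* \delta^N(t)$, plus $\frac{1}{N} \sum_j [\lambda_{j,A_j(t)}(\varsigma_j(t)) - \overline{\mathfrak F}(t)]$, a sum of i.i.d.\ centered variables bounded by $\lambda^*$, whose $L^1$ norm is at most $\lambda^*/\sqrt{N}$ by Cauchy--Schwarz. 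Assembling,
\begin{equation*}
\E\bigl[|\Upsilon^N_k(t) - \Upsilon_k(t)|\bigr] \leq 2\lambda^* \delta^N(t) + \frac{\lambda^*}{\sqrt{N}},
\end{equation*}
and integrating on $[0,T]$ followed by Gronwall's lemma (and the elementary bound $(e^{bT}-1)/b \leq T e^{bT}$) delivers \eqref{eqdelta}.

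\emph{Main obstacle.} The random functions $\gamma_{k,i}$ and $\lambda_{k,i}$ are only assumed c\`adl\`ag, which rules out any Lipschitz-in-time estimate for the differences $\gamma_{k,i}(\varsigma^N_k(t)) - \gamma_{k,i}(\varsigma_k(t))$. The remedy is the coarse indicator bound via $\{\mathcal{D}_k(t) \geq 1\}$, whose probability is controlled directly by $\delta^N(t)$ thanks to the Poisson coupling; this is what allows the Gronwall loop to close. The independence structure of the auxiliary processes $A_k$ is essential both to apply exchangeability uniformly in $j$ and to secure the $1/\sqrt{N}$ fluctuation term.
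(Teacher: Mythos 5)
Your proposal is correct and follows essentially the same route as the paper: the same-PRM coupling bound for \eqref{eqA}, the event-of-mismatch argument giving the $T\delta^N(T)$ bound for $\varsigma$, and the Gronwall loop closed via the decomposition into a jump-mismatch term (worth $\lambda^*\delta^N(t)$ each) plus the i.i.d.\ fluctuation term of order $\lambda^*/\sqrt{N}$. Your algebraic two-term splitting of $\Upsilon^N_k-\Upsilon_k$ is only a cosmetic variant of the paper's conditioning on $\{A^N_k(t)=A_k(t),\varsigma^N_k(t)=\varsigma_k(t)\}$ and yields the same bound $2\lambda^*\delta^N(t)+\lambda^*/\sqrt{N}$.
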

\begin{proof}
	We adapt here the proof of Theorem 4.1 in \cite{chevallier2017mean} to our setting. Since \[\left|A^N_k(t)-A_k(t)\right|\leq\int_{0}^{t}\int_{0}^{+\infty}\indic{\min\left(\Upsilon_k^N(r^-),\Upsilon_k(r^-)\right)< u\leq\max\left(\Upsilon_k^N(r^-),\Upsilon_k(r^-)\right)}Q_k(dr,du),\]
	we have
	\begin{equation*}
		\mathbb{E}\left[\sup_{t\in[0,T]}\left|A^N_k(t)-A_k(t)\right|\right]\leq\int_{0}^{T}\mathbb{E}\Big[\left|\Upsilon^N_k(t)-\Upsilon_k(t)\right|\Big]dt=\delta^N(T).
	\end{equation*}
	We recall that \[\Upsilon^N_k(t)=\gamma_{k,A_k^N(t)}(\varsigma_{k}^N(t))\overline{\mathfrak{F}}^N(t) \quad \text{ and } \quad \Upsilon_k(t)=\gamma_{k,A_k(t)}(\varsigma_{k}(t))\overline{\mathfrak{F}}(t).\]
	However, since $\gamma_{k,i}\leq1$ and  $0\leq\overline{\mathfrak{F}}^N(t),\,\overline{\mathfrak{F}}(t)\leq\lambda^\ast$, we obtain 
	\begin{align}\label{EL0}
		\mathbb{E}\Big[\left|\Upsilon^N_k(t)-\Upsilon_k(t)\right|\Big]&\leq\mathbb{E}\left[\left|\Upsilon^N_k(t)-\Upsilon_k(t)\right|\indic{A^N_k(t)=A_k(t),\varsigma_{k}(t)=\varsigma^N_k(t)}\right]+\nonumber\\&\hspace*{3cm}\lambda^*\mathbb{P}\left(A^N_k(t)\neq A_k(t)\text{ or }\varsigma_{k}(t)\neq \varsigma^{N}_k(t)\right).
	\end{align}
	On the other hand, using $\overline{\mathfrak{F}}(t)=\E\left[\lambda_{1,A_1(t)}(\varsigma_{1}(t))\right]$, we have
	\begin{align}
		&  \mathbb{E}\left[\left|\Upsilon^N_k(t)-\Upsilon_k(t)\right|\indic{A^N_k(t)=A_k(t),\varsigma_{k}(t)=\varsigma^N_k(t)}\right] \nonumber \\
		&\leq\E\left[\left|\overline{\mathfrak F}^N(t)-\overline{\mathfrak{F}}(t)\right|\right]\nonumber\\
		&=\mathbb{E}\left[\left|\frac{1}{N}\sum_{j=1}^{N}\left(\lambda_{j,A^N_j(t)}(\varsigma^N_{j}(t))-\mathbb{E}\left[\lambda_{1,A_1(t)}(\varsigma_{1}(t))\right]\right)\right|\right]\nonumber\\
		&\leq\mathbb{E}\left[\left|\frac{1}{N}\sum_{j=1}^{N}\left(\lambda_{j,A^N_j(t)}(\varsigma^N_{j}(t))-\lambda_{j,A_j(t)}(\varsigma_{j}(t))\right)\right|\right]\nonumber\\
		&\hspace*{1cm}+\mathbb{E}\left[\left|\frac{1}{N}\sum_{j=1}^{N}\left(\lambda_{j,A_j(t)}(\varsigma_{j}(t))-\mathbb{E}\left[\lambda_{1,A_1(t)}(\varsigma_{1}(t))\right]\right)\right|\right]\label{EL1}.
	\end{align}

	Since $((\lambda_{k,i})_i,A_k,\varsigma_k)_k$ are i.i.d, $(\lambda_{k,A_k(t)}(\varsigma_k(t))_k$ are i.i.d., hence, by H\"older's inequality we have 
	\begin{align}\label{EL2}
		& \mathbb{E}\left[\left|\frac{1}{N}\sum_{j=1}^{N}\left(\lambda_{j,A_j(t)}(\varsigma_{j}(t))-\mathbb{E}\left[\lambda_{1,A_1(t)}(\varsigma_{1}(t))\right]\right)\right|\right] \nonumber \\
		&\leq\frac{1}{N}\left(\E\left[\left(\sum_{j=1}^{N}\left(\lambda_{j,A_j(t)}(\varsigma_{j}(t))-\mathbb{E}\left[\lambda_{j,A_j(t)}(\varsigma_{j}(t))\right]\right)\right)^{2}\right]\right)^{\frac{1}{2}}\nonumber\\
		&=\frac{1}{N}\left(\sum_{j=1}^{N}\E\left[\left(\lambda_{j,A_j(t)}(\varsigma_{j}(t))-\mathbb{E}\left[\lambda_{j,A_j(t)}(\varsigma_{j}(t))\right]\right)^2\right]\right)^{\frac{1}{2}}\nonumber\\
		&\leq\frac{\lambda^*}{\sqrt{N}}.
	\end{align}
	Here the equality holds because $(\lambda_{k,A_k(t)}(\varsigma_{k}(t))-\mathbb{E}\left[\lambda_{k,A_k(t)}(\varsigma_{k}(t))\right])_k$ are i.i.d. and the last inequality holds since $\lambda_{k,i}$ is bounded by $\lambda^*$. 
	
	In addition, as $(A_j^N)_j$ are exchangeable we have
	\begin{align}\label{egals}
		& \mathbb{E}\left[\left|\frac{1}{N}\sum_{j=1}^{N}\left(\lambda_{j,A^N_j(t)}(\varsigma^N_{j}(t))-\lambda_{j,A_j(t)}(\varsigma_{j}(t))\right)\right|\right] \nonumber\\
		&=\mathbb{E}\left[\left|\frac{1}{N}\sum_{j=1}^{N}\left(\lambda_{j,A^N_j(t)}(\varsigma^N_{j}(t))-\lambda_{j,A_j(t)}(\varsigma_{j}(t))\right)\indic{A_j(t)\neq A_j^N(t)\text{ or }\varsigma_{j}(t)\neq \varsigma_{j}^N(t)}\right|\right] \nonumber \\
		&\leq\frac{\lambda^*}{N}\sum_{j=1}^{N}\mathbb{P}\left(A_j(t)\neq A_j^N(t)\text{ or }\varsigma_{j}(t)\neq \varsigma_{j}^N(t)\right) \nonumber \\
		&=\lambda^*\mathbb{P}\left(A_k(t)\neq A_k^N(t)\text{ or }\varsigma_{k}(t)\neq \varsigma_{k}^N(t)\right).
	\end{align}
	
	Hence from \eqref{EL1}, \eqref{EL2} and \eqref{egals} we have
	\begin{multline*}
		\mathbb{E}\left[\left|\Upsilon^N_k(t)-\Upsilon_k(t)\right|\indic{A^N_k(t)=A_k(t),\varsigma_{k}(t)=\varsigma^N_k(t)}\right]\\ \leq\frac{\lambda^*}{\sqrt{N}}+\lambda^*\mathbb{P}\left(A_k(t)\neq A_k^N(t)\text{ or }\varsigma_{k}(t)\neq \varsigma_{k}^N(t)\right).
	\end{multline*}
	On the other hand, since \[\left\{A^N_k(t)\neq A_k(t)\text{ or }\varsigma_{k}(t)\neq \varsigma^N_k(t)\right\}\subset\left\{\sup_{r\in[0,t]}|A^N_k(r)-A_k(r)|\geq1\right\},\]
	we have 
	\[\mathbb{P}\left(A_k(t)\neq A_k^N(t)\text{ or }\varsigma_{k}(t)\neq \varsigma_{k}^N(t)\right)\leq\E\left[\sup_{r\in[0,t]}|A^N_k(r)-A_k(r)|\right]\leq\delta^N(t).\]
	Thus, from \eqref{EL0}, we have 
	\begin{equation*}
		\mathbb{E}\Big[\left|\Upsilon^N_k(t)-\Upsilon_k(t)\right|\Big]\leq\frac{\lambda^*}{\sqrt{N}}+2\lambda^*\delta^N(t).
	\end{equation*}
	Combining this with \eqref{eqA}, we deduce that for any $T\geq0,$
	\[\delta^N(T)\leq\frac{\lambda^*}{\sqrt{N}}T+2\lambda^*\int_{0}^{T}\delta^N(t)dt,\]
	hence by Gronwall's lemma, we have
	\begin{equation*}\delta^N(T)\leq\frac{\lambda^*}{\sqrt{N}}T\exp(2\lambda^*T).\end{equation*}
	Moreover, 
	\begin{align*}
		\mathbb{E}\left[\sup_{t\in[0,T]}\left|\varsigma^N_k(t)-\varsigma_k(t)\right|\right]&=\mathbb{E}\left[\indic{\{\exists t\in[0,T],\varsigma^N_k(t)\neq\varsigma_k(t)\}}\sup_{t\in[0,T]}\left|\varsigma^N_k(t)-\varsigma_k(t)\right|\right]\\
		&\leq T\mathbb{P}\left(\exists t\in[0,T],\varsigma^N_k(t)\neq\varsigma_k(t)\right)\\
		&=T\mathbb{P}\left(\sup_{t\in[0,T]}\left|A^N_k(t)-A_k(t)\right|\neq0\right)\\
		&\leq T\E\left[\sup_{t\in[0,T]}\left|A^N_k(t)-A_k(t)\right|\right]\\
		&\leq T\delta^N(T).
	\end{align*}
	This concludes the proof of the lemma.
\end{proof}

\medskip
From the proof of Lemma~\ref{lem_inq}, we deduce the following Remark.
\begin{remark}
	For $k\in\mathbb{N}$ and $t\geq0$ we have
	\begin{align*}
		\E\left[\left|\overline{\mathfrak F}^N(t)-\overline{\mathfrak{F}}(t)\right|\right] &\leq \frac{\lambda^*}{\sqrt{N}}\left(1+t\exp(2\lambda^*t)\right), \\
		\E\left[\left|\overline{\mathfrak S}^N(t)-\overline{\mathfrak{S}}(t)\right|\right] &\leq \frac{1}{\sqrt{N}}\left(1+\lambda^*t\exp(2\lambda^*t)\right),
	\end{align*}
	and
	\begin{equation*}
		\E\left[\left|\Upsilon^N_k(t)-\Upsilon_k(t)\right|\right]\leq \frac{\lambda^*}{\sqrt{N}}\left(1+2\lambda^*t\exp(2\lambda^*t)\right).
	\end{equation*}
\end{remark}
From Lemma~\ref{lem_inq}, we deduce the following Lemma.
\begin{lemma}
	For $k\in\mathbb{N}$ and $T\geq0$ we have 
	\begin{equation}
		\E\left[\sup_{t\in[0,T]}\left|\gamma_{k,A_k^N(t)}(\varsigma^N_k(t))-\gamma_{k,A_k(t)}(\varsigma_k(t))\right|\right]\leq\frac{\lambda^*}{\sqrt{N}}T\exp(2\lambda^*T),\label{eqgam}
	\end{equation}
	\begin{equation}
		\E\left[\sup_{t\in[0,T]}\left|\lambda_{k,A_k^N(t)}(\varsigma^N_k(t))-\lambda_{k,A_k(t)}(\varsigma_k(t))\right|\right]\leq\frac{\lambda^{*2}}{\sqrt{N}}T\exp(2\lambda^*T),\label{eqlam}
	\end{equation}
	and
	\begin{equation}
		\E\left[\sup_{t\in[0,T]}\left|\indic{\varsigma^N_k(t)<\eta_{k,A_k^N(t)}}-\indic{\varsigma_k(t)<\eta_{k,A_k(t)}}\right|\right]\leq\frac{\lambda^*}{\sqrt{N}}T\exp(2\lambda^*T).\label{eqind}
	\end{equation}
\end{lemma}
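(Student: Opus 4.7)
The plan is to deduce all three estimates from Lemma~\ref{lem_inq} by exploiting the same coupling idea: each of the three differences vanishes on the event that $A^N_k$ agrees with $A_k$ up to time $T$, and is bounded by a deterministic constant otherwise. Thus only the probability of the disagreement event needs to be controlled, and this is precisely what \eqref{eqA} and \eqref{eqdelta} provide.

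More concretely, I would first observe that if $A^N_k(s) = A_k(s)$ for all $s \in [0,T]$, then the jump times of these two counting processes on $[0,T]$ coincide, and hence $\varsigma^N_k(t) = \varsigma_k(t)$ for every $t \in [0,T]$. On this event, the three integrands in \eqref{eqgam}--\eqref{eqind} are identically zero. Consequently,
\begin{align*}
\sup_{t \in [0,T]} \bigl| \gamma_{k,A^N_k(t)}(\varsigma^N_k(t)) - \gamma_{k,A_k(t)}(\varsigma_k(t)) \bigr| &\le \indic{\sup_{s \in [0,T]} |A^N_k(s) - A_k(s)| \ge 1}, \\
\sup_{t \in [0,T]} \bigl| \lambda_{k,A^N_k(t)}(\varsigma^N_k(t)) - \lambda_{k,A_k(t)}(\varsigma_k(t)) \bigr| &\le \lambda^\ast \indic{\sup_{s \in [0,T]} |A^N_k(s) - A_k(s)| \ge 1}, \\
\sup_{t \in [0,T]} \bigl| \indic{\varsigma^N_k(t) < \eta_{k,A^N_k(t)}} - \indic{\varsigma_k(t) < \eta_{k,A_k(t)}} \bigr| &\le \indic{\sup_{s \in [0,T]} |A^N_k(s) - A_k(s)| \ge 1},
\end{align*}
where I used that $0 \le \gamma_{k,i} \le 1$, $0 \le \lambda_{k,i} \le \lambda^\ast$ and that an indicator is bounded by $1$ (all guaranteed by Assumption~\ref{AS-lambda}).

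Next, taking expectations and applying Markov's inequality,
\begin{equation*}
\P\Bigl( \sup_{s \in [0,T]} |A^N_k(s) - A_k(s)| \ge 1 \Bigr) \le \E\Bigl[ \sup_{s \in [0,T]} |A^N_k(s) - A_k(s)| \Bigr] \le \delta^N(T) \le \frac{\lambda^\ast}{\sqrt N}\, T \exp(2\lambda^\ast T),
\end{equation*}
by Lemma~\ref{lem_inq}. Substituting this bound into the three displays above immediately yields \eqref{eqgam}, \eqref{eqlam} and \eqref{eqind}.

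There is no real obstacle here; the only point requiring minor care is the equivalence between $\{A^N_k(s) = A_k(s) \text{ for all } s \in [0,T]\}$ and $\{\varsigma^N_k(t) = \varsigma_k(t) \text{ for all } t \in [0,T]\}$, which follows since $\varsigma^N_k(t)$ (resp.\ $\varsigma_k(t)$) is entirely determined by the location of the last jump of $A^N_k$ (resp.\ $A_k$) before time $t$, with the same convention $\sup \emptyset = -\infty$. Everything else reduces to applying Lemma~\ref{lem_inq}.
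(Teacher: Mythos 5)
Your proof is correct and follows essentially the same route as the paper: the paper likewise observes that each difference vanishes unless $\sup_{t\in[0,T]}|A^N_k(t)-A_k(t)|\geq 1$, bounds it by $1$, $\lambda^\ast$, or $1$ accordingly, controls the probability of that event by $\delta^N(T)$ via \eqref{eqA}, and concludes with \eqref{eqdelta}. Your explicit remark that equality of $A^N_k$ and $A_k$ on $[0,T]$ forces $\varsigma^N_k=\varsigma_k$ there is exactly the (implicit) justification used in the paper.
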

\begin{proof}
	From \eqref{eqA} and the fact that $\gamma_{k,i}\le1$, we have
	\begin{multline*}
		\E\left[\sup_{t\in[0,T]}\left|\gamma_{k,A_k^N(t)}(\varsigma^N_k(t))-\gamma_{k,A_k(t)}(\varsigma_k(t))\right|\right]\\
		\begin{aligned}
			&=\E\left[\sup_{t\in[0,T]}\left|\gamma_{k,A_k^N(t)}(\varsigma^N_k(t))-\gamma_{k,A_k(t)}(\varsigma_k(t))\right|\indic{\sup_{t\in[0,T]}\left|A_k^N(t)-A_k(t)\right|\geq1}\right]\\
			&\leq\mathbb{P}\left(\sup_{t\in[0,T]}\left|A^N_k(t)-A_k(t)\right|\geq1\right)\\
			&\leq\delta^N(T).
		\end{aligned}
	\end{multline*}
	Similarly we also have 
	\begin{numcases}{}
		\E\left[\sup_{t\in[0,T]}\left|\lambda_{k,A_k^N(t)}(\varsigma^N_k(t))-\lambda_{k,A_k(t)}(\varsigma_k(t))\right|\right]\leq\lambda^*\delta^N(T),\nonumber\\
		\E\left[\sup_{t\in[0,T]}\left|\indic{\varsigma^N_k(t)<\eta_{k,A_k^N(t)}}-\indic{\varsigma_k(t)<\eta_{k,A_k(t)}}\right|\right]\leq\delta^N(T)\nonumber.
	\end{numcases}
	Hence the claims follow from \eqref{eqdelta}. 
\end{proof}

\begin{proof}[Completing the proof of Theorem~\ref{thm-FLLN}]
	For $t\geq0$, we have
	\begin{align*}
		\overline{\mathfrak{F}}^N(t)&=\frac{1}{N}\sum_{k=1}^{N}\lambda_{k,A^N_k(t)}(\varsigma^{N}_k(t))\\
		&=\frac{1}{N}\sum_{k=1}^{N}\left(\lambda_{k,A^N_k(t)}(\varsigma^{N}_k(t))-\lambda_{k,A_k(t)}(\varsigma_k(t))\right)+\frac{1}{N}\sum_{k=1}^{N}\lambda_{k,A_k(t)}(\varsigma_k(t)),
	\end{align*}
	and 
	\begin{align*}
		\overline{\mathfrak{S}}^N(t)&=\frac{1}{N}\sum_{k=1}^{N}\gamma_{k,A^N_k(t)}(\varsigma^{N}_k(t))\\
		&=\frac{1}{N}\sum_{k=1}^{N}\left(\gamma_{k,A^N_k(t)}(\varsigma^{N}_k(t))-\gamma_{k,A_k(t)}(\varsigma_k(t))\right)+\frac{1}{N}\sum_{k=1}^{N}\gamma_{k,A_k(t)}(\varsigma_k(t)).
	\end{align*}
	From \eqref{eqgam} and \eqref{eqlam}, we have 
	\begin{equation*}\left\{
		\begin{aligned}
			\E\left[\frac{1}{N}\sum_{k=1}^{N}\sup_{t \in [0,T]}\left|\lambda_{k,A^N_k(t)}(\varsigma^{N}_k(t))-\lambda_{k,A_k(t)}(\varsigma_k(t))\right|\right]\leq\frac{\lambda^{*2}}{\sqrt{N}}T\exp(2\lambda^*T),\\
			\E\left[\frac{1}{N}\sum_{k=1}^{N}\sup_{t \in [0,T]}\left|\gamma_{k,A^N_k(t)}(\varsigma^{N}_k(t))-\gamma_{k,A_k(t)}(\varsigma_k(t))\right|\right]\leq\frac{\lambda^{*}}{\sqrt{N}}T\exp(2\lambda^*T).
		\end{aligned}\right.
	\end{equation*}
	Hence, 
	\begin{multline*}
		\left(\frac{1}{N}\sum_{k=1}^{N}\left(\gamma_{k,A^N_k(t)}(\varsigma^{N}_k(t))-\gamma_{k,A_k(t)}(\varsigma_k(t))\right), \right. \\ \left. \frac{1}{N}\sum_{k=1}^{N}\left(\lambda_{k,A^N_k(t)}(\varsigma^{N}_k(t))-\lambda_{k,A_k(t)}(\varsigma_k(t))\right)\right) \xrightarrow[N\to+\infty]{}(0,0)
	\end{multline*}
	locally uniformly in $t$. 
	
	Moreover, as $\left(\gamma_{k,A_k(\cdot)}(\varsigma_k(\cdot)),\lambda_{k,A_k(\cdot)}(\varsigma_k(\cdot))\right)_k$ is a collection of i.i.d. random variables in $D^2$, by the law of large numbers in $D^2$ \cite[Theorem~$1$]{rao1963law},
	\begin{multline*}\left(\frac{1}{N}\sum_{k=1}^{N}\gamma_{k,A_k(\cdot)}(\varsigma_k(\cdot)),\frac{1}{N}\sum_{k=1}^{N}\lambda_{k,A_k(\cdot)}(\varsigma_k(\cdot))\right)\\\xrightarrow[N\to+\infty]{\mathbb{P}}\left(\E\left[\gamma_{1,A_1(\cdot)}(\varsigma_{1}(\cdot))\right],\E\left[\lambda_{1,A_1(\cdot)}(\varsigma_{1}(\cdot))\right]\right)\text{ in }D^2.\end{multline*}
	
	We have shown in the proof on Lemma~\ref{exist_A} that the pair $(\overline{\mathfrak{S}},\overline{\mathfrak{F}})$ given in Remark~\ref{rq-6.2} solves the set of equations \eqref{LLN_xy}. This proves the convergence \eqref{eqn-mfk-SF-conv}. 
	
	\medskip
	
	For $(\bar{U}^N, \bar{I}^N)$, we have for $t\geq0$,
	\begin{align*}
		\overline{I}^N(t)&=\frac{1}{N}\sum_{k=1}^{N}\indic{\varsigma^N_k(t)<\eta_{k,A_k^N(t)}}\\
		&=\frac{1}{N}\sum_{k=1}^{N}\left(\indic{\varsigma^N_k(t)<\eta_{k,A_k^N(t)}}-\indic{\varsigma_k(t)<\eta_{k,A_k(t)}}\right)+\frac{1}{N}\sum_{k=1}^{N}\indic{\varsigma_k(t)<\eta_{k,A_k(t)}},
	\end{align*}
	and 
	\begin{align*}
		\overline{U}^N(t)&=\frac{1}{N}\sum_{k=1}^{N}\indic{\varsigma^N_k(t)\geq\eta_{k,A_k^N(t)}}\\
		&=\frac{1}{N}\sum_{k=1}^{N}\left(\indic{\varsigma^N_k(t)\geq\eta_{k,A_k^N(t)}}-\indic{\varsigma_k(t)\geq\eta_{k,A_k(t)}}\right)+\frac{1}{N}\sum_{k=1}^{N}\indic{\varsigma_k(t)\geq\eta_{k,A_k(t)}}.
	\end{align*}
	As above, from \eqref{eqind}, we deduce that 
	\begin{multline*}\left(\frac{1}{N}\sum_{k=1}^{N}\left(\indic{\varsigma^N_k(t)\geq\eta_{k,A_k^N(t)}}-\indic{\varsigma_k(t)\geq\eta_{k,A_k(t)}}\right),\frac{1}{N}\sum_{k=1}^{N}\left(\indic{\varsigma^N_k(t)<\eta_{k,A_k^N(t)}}-\indic{\varsigma_k(t)<\eta_{k,A_k(t)}}\right)\right)\\\xrightarrow[N\to+\infty]{}(0,0)\end{multline*}locally uniformly in $t$. 
	
	Moreover, as $\left(\varsigma_k(t),\eta_{k,A_k(t)}\right)_k$ is a collection of i.i.d. random variables in $D^2$, by the law of large numbers in $D^2$ \cite[Theorem~$1$]{rao1963law},
	\begin{multline*}\left(\frac{1}{N}\sum_{k=1}^{N}\indic{\varsigma_k(t)\geq\eta_{k,A_k(t)}},\frac{1}{N}\sum_{k=1}^{N}\indic{\varsigma_k(t)<\eta_{k,A_k(t)}}\right)\\\xrightarrow[N\to+\infty]{\mathbb{P}}\left(\E\left[\indic{\varsigma_1(t)\geq\eta_{1,A_1(t)}}\right],\E\left[\indic{\varsigma_1(t)<\eta_{1,A_1(t)}}\right]\right)\text{ in }D^2.\end{multline*}
	Recall the definition of $ \overline{I}(t) $ and $ \overline{U}(t) $ in \eqref{eqn-barS} and \eqref{eqn-barI}. In order to complete the proof of Theorem~\ref{thm-FLLN}, it remains to verify that: 
	\[\overline{I}(t)=\E\left[\indic{\varsigma_1(t)<\eta_{1,A_1(t)}}\right] \text{ and }\overline{U}(t)=\E\left[\indic{\varsigma_1(t)\geq\eta_{1,A_1(t)}}\right].\]
	Denote the jump times of the process $A_1$ by $(\tau_i)_{i\geq1}$,  if $t-\tau_i<\eta_{1,i},\, \lambda_{1,i}(t-\tau_i)\neq0$.  From Assumption~\ref{AS-lambda}, we deduce that
	$$\int_{\tau_i}^{t}\gamma_{1,i}(r-\tau_i)\overline{\mathfrak F}(r)dr=0.$$
	So $A_1(t)=A_1(\tau_{i})$. 
	Since $\indic{t<\eta_{1,0}}=\indic{t<\eta_{1,0}}\indic{A_1(t)=0}$ a.s., we have 
	\begin{align*}
		\E\left[\indic{\varsigma_1(t)<\eta_{1,A_1(t)}}\right]&=\E\left[\indic{t<\eta_{1,0}}\indic{A_1(t)=0}+\sum_{i\geq1}\indic{t-\tau_i<\eta_{1,i}}\indic{A_1(t)=i}\right]\nonumber\\
		&=\P\left(\eta_{1,0}>t\big|\eta_{1,0}>0\right)\P\left(\eta_{1,0}>0\right)+\E\left[\sum_{i\geq1}\indic{t-\tau_i<\eta_{1,i}}\indic{\tau_i\leq t}\right]\nonumber\\
		&=\overline{I}(0)F^c_0(t)+\sum_{i\geq1}\E\left[\P\left(t-\tau_i<\eta_{1,i}\big|\tau_i\right)\indic{\tau_i\leq t}\right].
	\end{align*}
	Moreover, since $\eta_{1,i}$ and $\tau_i$ are independent, we obtain 
	\begin{align}
		\E\left[\indic{\varsigma_1(t)<\eta_{1,A_1(t)}}\right]&=\overline{I}(0)F^c_0(t)+\sum_{i\geq1}\E\left[F^c\left(t-\tau_i\right)\indic{\tau_i\leq t}\right]\nonumber\\
		&=\overline{I}(0)F^c_0(t)+\E\left[\int_{0}^{t}F^c(t-s)A_1(ds)\right]\nonumber\\
		&=\overline{I}(0)F^c_0(t)+\int_{0}^{t}F^c(t-s)\overline{\mathfrak F}(s)\overline{\mathfrak S}(s)ds.\label{eqd1}
	\end{align}
	If $\eta_{1,0}>t,\,\lambda_{1,0}(t)>0\,(\eta_{1,1}>t,\,\lambda_{1,1}(t)>0)$, from Assumption \ref{AS-lambda} by using the fact that $\lambda_{1,1}(t)>0 \,(\,\lambda_{1,0}(t)>0)$ implies that $\gamma_{1,1}(s)=0\, (\,\gamma_{1,0}(s)=0)$ for $0\leq s\leq t$, from \eqref{eqd1} we also have, 
	\begin{align*}
		\E\left[\indic{\varsigma_1(t)<\eta_{1,A_1(t)}}\right]&=\E\left[\indic{\eta_{1,0}>t}\exp\left(-\int_{0}^{t}\gamma_{0}(r)\overline{\mathfrak F}(r)dr\right)\right]\\&\hspace{1.5cm}+\int_{0}^{t}\E\left[\indic{\eta_{1,1}>t-s}\exp\left(-\int_{s}^{t}\gamma(r-s)\overline{\mathfrak F}(r)dr\right)\right]\overline{\mathfrak F}(s)\overline{\mathfrak S}(s)ds,
	\end{align*}
	which combined with \eqref{eqG1} yields
	\begin{align*}
		\E\left[\indic{\varsigma_1(t)\geq\eta_{1,A_1(t)}}\right]&=1-\E\left[\indic{\varsigma_1(t)<\eta_{1,A_1(t)}}\right]\\
		&=\E\left[\indic{t\geq\eta_{1,0}}\exp\left(-\int_{0}^{t}\gamma_{0}(r)\overline{\mathfrak F}(r)dr\right)\right]\\&\hspace{1.5cm}+\int_{0}^{t}\E\left[\indic{t-s\geq\eta_{1,1}}\exp\left(-\int_{s}^{t}\gamma(r-s)\overline{\mathfrak F}(r)dr\right)\right]\overline{\mathfrak F}(s)\overline{\mathfrak S}(s)ds.
	\end{align*}
	This completes the proof of Theorem~\ref{thm-FLLN}. 
\end{proof}

\medskip

We also have the following convergence result on the empirical measure of the processes $(A^N_k(t),\varsigma^N_k(t))_{t\geq0}$.
It is not used in our analysis, but a worth to be established. 

\begin{theorem}\label{LLN-P}
	\begin{equation}  \label{eqn-LLN-P}
		\frac{1}{N}\sum_{k=1}^{N}\delta_{(A^N_k(t),\varsigma^N_k(t))_{t\geq0}}\xrightarrow[N\to+\infty]{\mathbb{P}}\mathcal{L}\left((A_1(t),\varsigma_1(t))_{t\geq0}\right)\text{ in }\mathcal{P}(D^2).
	\end{equation}
\end{theorem}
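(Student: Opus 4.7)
The plan is to prove this empirical-measure convergence by a coupling argument, combining Lemma~\ref{lem_inq} with Varadarajan's law of large numbers for empirical measures of i.i.d.\ samples in the Polish space $D^2$. Fix a bounded complete metric $d$ on $D^2$ metrizing the Skorokhod $J_1$ topology, e.g.\ $d = \sum_{T\geq 1} 2^{-T} (d_T \wedge 1)$ where $d_T$ denotes the usual Skorokhod metric on $D([0,T])^2$, and let $\rho$ denote the associated bounded-Lipschitz metric on $\mathcal{P}(D^2)$, which metrizes weak convergence. By the triangle inequality, it suffices to show that the two quantities
\begin{align*}
\rho_1^N := \rho\!\left(\tfrac{1}{N}\sum_{k=1}^N \delta_{(A^N_k,\varsigma^N_k)},\, \tfrac{1}{N}\sum_{k=1}^N \delta_{(A_k,\varsigma_k)}\right), \quad \rho_2^N := \rho\!\left(\tfrac{1}{N}\sum_{k=1}^N \delta_{(A_k,\varsigma_k)},\, \mathcal{L}((A_1,\varsigma_1))\right)
\end{align*}
both converge to zero in probability as $N\to\infty$.

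The convergence $\rho_2^N \to 0$ almost surely is immediate from Varadarajan's theorem, since the auxiliary processes $(A_k,\varsigma_k)_{k\geq 1}$ are i.i.d.\ by construction. For $\rho_1^N$, the dual formulation of the bounded-Lipschitz distance combined with the convexity of the empirical measure gives
\begin{align*}
\rho_1^N \leq \frac{1}{N}\sum_{k=1}^N d\bigl((A^N_k,\varsigma^N_k),(A_k,\varsigma_k)\bigr),
\end{align*}
and since the $N$ pairs on the right-hand side are exchangeable in $k$ (all are obtained from the same symmetric construction using i.i.d.\ inputs $(\lambda_{k,i},\gamma_{k,i},Q_k)$), taking expectations yields $\E[\rho_1^N] \leq \E\bigl[d((A^N_1,\varsigma^N_1),(A_1,\varsigma_1))\bigr]$. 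Now for each fixed $T$, on the event $\{\sup_{s\in[0,T]}|A^N_1(s)-A_1(s)|<1\}$, the $\N$-valued counting processes $A^N_1$ and $A_1$ must coincide on $[0,T]$, and therefore so do $\varsigma^N_1$ and $\varsigma_1$; hence $d_T$ of their restrictions to $[0,T]$ vanishes. Combined with Markov's inequality and \eqref{eqdelta} from Lemma~\ref{lem_inq}, this gives $\E[d_T((A^N_1,\varsigma^N_1),(A_1,\varsigma_1))\wedge 1] \leq \delta^N(T) \to 0$ for every $T$, and summing against the weights $2^{-T}$ via dominated convergence yields $\E[\rho_1^N] \to 0$.

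The main technical obstacle is the passage from finite-time control to the full Skorokhod topology on $D([0,\infty))^2$: since the half-line Skorokhod metric cannot be captured by a single supremum-type quantity, one has to work with a weighted infinite sum of the $D([0,T])^2$ metrics and verify that the dominated convergence argument goes through uniformly. Once this bookkeeping is in place, the result is essentially a mechanical consequence of the coupling bound already established in Lemma~\ref{lem_inq} and Varadarajan's theorem.
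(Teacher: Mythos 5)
Your argument is correct in substance, but it is a genuinely different route from the paper's. The paper proves the statement by combining the coupling estimate of Lemma~\ref{lem_inq} with the propagation-of-chaos machinery: it deduces that the pair $\big((A^N_1,\varsigma^N_1),(A^N_2,\varsigma^N_2)\big)$ converges in probability to $\big((A_1,\varsigma_1),(A_2,\varsigma_2)\big)$ in $D^2\times D^2$, and then invokes exchangeability together with Sznitman's Proposition~2.2~(i), which states the equivalence between chaoticity of an exchangeable system and convergence of its empirical measures. You instead work directly on the empirical measures: you split the bounded-Lipschitz distance into the coupling term $\rho_1^N$, bounded via convexity and exchangeability by $\E\big[d((A^N_1,\varsigma^N_1),(A_1,\varsigma_1))\big]$, and the i.i.d.\ term $\rho_2^N$, handled by Varadarajan's theorem; the key observation that coincidence of the integer-valued processes $A^N_1$ and $A_1$ on $[0,T]$ (hence of $\varsigma^N_1$ and $\varsigma_1$) follows from $\sup_{[0,T]}|A^N_1-A_1|<1$, combined with \eqref{eqdelta} and dominated convergence over the weights $2^{-T}$, is sound. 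Your route is more self-contained and quantitative (it avoids citing the chaoticity/empirical-measure equivalence and in fact yields a rate for the finite-horizon contributions), at the price of handling the metric structure of $\mathcal{P}(D^2)$ by hand, which the paper's two-line appeal to \cite{sznitman1991topics} sidesteps.

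One technical point to repair: the metric $d=\sum_{T\geq1}2^{-T}(d_T\wedge1)$ built from restrictions to $[0,T]$ does \emph{not} metrize the Skorokhod $J_1$ topology on $D([0,\infty))$ -- restriction is discontinuous at paths with a jump at the endpoint $T$, and this metric induces a strictly finer topology. This does not break your proof (convergence in a finer metric is only stronger, and your coupling bound controls it), but to keep the statement about metrization, and the appeals to Varadarajan's theorem and to the bounded-Lipschitz metrization of weak convergence, on standard footing you should use the usual half-line metric with tapering functions (as in Billingsley or Ethier--Kurtz), for which your estimate works verbatim: if the two paths coincide on $[0,T]$, the $T$-th tapered term vanishes, and the rest of the argument is unchanged.
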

\begin{proof}
	By Lemma~\ref{lem_inq}, we have
	\begin{multline*}
		\mathbb{E}\left[\sup_{t\in[0,T]}\Big(\left|A^N_1(t)-A_1(t)\right|+\left|A^N_2(t)-A_2(t)\right|+\left|\varsigma^N_1(t)-\varsigma_1(t)\right|+\left|\varsigma^N_2(t)-\varsigma_2(t)\right|\Big)\right]\\\xrightarrow[N\to+\infty]{}0.
	\end{multline*}
	Hence 
	\begin{multline*}\left(\left\{(A^N_1(t),\varsigma^N_1(t))\right\}_{t\geq0},\left\{(A^N_2(t),\varsigma^N_2(t))\right\}_{t\geq0}\right)\\\xrightarrow[N\to+\infty]{\mathbb{P}}
		\left(\left\{(A_1(t),\varsigma_1(t))\right\}_{t\geq0},\left\{(A_2(t),\varsigma_2(t))\right\}_{t\geq0}\right)\end{multline*}
	in $D^2\times D^2$. 
	Thus since the processes are exchangeable and $D$ is a separable metric space, by \cite[Proposition~$2.2$ $i)$, page $177$]{sznitman1991topics}, the convergence in \eqref{eqn-LLN-P} holds. 
\end{proof}

\bigskip

\section{Proofs for the endemic equilibrium} \label{sec-proofs-endemic}

In this section, we prove the results on the endemic equilibrium behaviors.  We proceed in two subsections to prove the results in the scenarios $R_0<\E\left[\frac{1}{\gamma_*}\right]$ and $R_0\geq\E\left[\frac{1}{\gamma_*}\right]$. We have a complete theory in the first scenario as stated in Theorem~\ref{thm-eqlm1}, which we prove first. We then establish some of the partial results in the second scenario.

\subsection{Proof of Theorem~\ref{thm-eqlm1}}

\begin{proof}[Proof of Theorem~\ref{thm-eqlm1}]
	This theorem is proved in two cases: $\mathbb{P}\left(\gamma_\ast=0\right)>0$ and $\mathbb{P}\left(\gamma_\ast=0\right)=0$.

	{\it Case 1: $\mathbb{P}\left(\gamma_\ast=0\right)>0$.} 	
	Recall \eqref{eqG1}.	Note that, since $ \gamma_* = 0 $ implies $ \gamma(t) = 0 $ for all $ t $,
	\begin{align*}
		\E \left[ \exp\left( - \int_{s}^{t} \gamma(r-s) \overline{\mathfrak F}(r) dr \right) \right] \geq \P (\gamma_* = 0).
	\end{align*}
	As a result, from \eqref{eqG1}, for all $ t \geq 0 $,
	\begin{align} \label{bound_new_infections}
		\int_{0}^{t} \overline{\mathfrak S}(s) \overline{\mathfrak F}(s) ds \leq \frac{1}{\P (\gamma_* = 0)}.
	\end{align}
	Consequently, 
	\begin{equation}\label{eqn1}
		\int_{0}^{+\infty}\overline{\mathfrak S}(s) \overline{\mathfrak F}(s) ds<+\infty.
	\end{equation}
	
	Recall that, by Assumption~\ref{hyp-ga-1}, $\overline{\lambda}_0(t)\to0,\,\overline{\lambda}(t)\to0$ as $t\to+\infty$ (see \eqref{lim_lambda_bar}). 
	By the dominated convergence theorem  applied to the second line of \eqref{LLN_GF}, using \eqref{eqn1}, we obtain 
	\begin{equation*}
		\lim_{t\to+\infty}\overline{\mathfrak F}(t)=\lim_{t\to+\infty}\int_{0}^{t}\overline{\lambda}(t-s)\overline{\mathfrak{S}}(s)\overline{\mathfrak{F}}(s)ds=0.
	\end{equation*}  
	Next by the dominated convergence theorem applied to the first line in \eqref{LLN_GF} using \eqref{eqn1} again,
	$\overline{\mathfrak S}_\ast=\lim_{t\to+\infty}\overline{\mathfrak S}(t)$ as given in \eqref{eqn-mfkS*-1}. 
	This concludes the proof of the first case of Theorem~\ref{conv1-f}.
	
	\medskip 
	
	{\it Case 2: $\mathbb{P}\left(\gamma_\ast=0\right)=0$.}
	We first note that
	\begin{equation} \label{eq_equil1-a}
		\int_{0}^{+\infty}\overline{\mathfrak{F}}(u)du<+\infty\Leftrightarrow \int_{0}^{+\infty}\overline{\mathfrak S}(s)\overline{\mathfrak{F}}(s)ds<+\infty.
	\end{equation}
	Indeed, from the second line in \eqref{LLN_GF},  
	and from Fubuni's theorem
	\begin{equation}\label{eq_equil1}
		\int_{0}^{+\infty}\overline{\mathfrak{F}}(u)du=\overline{I}(0)\int_{0}^{+\infty}\overline{\lambda}_0(u)du+R_0\int_{0}^{+\infty}\overline{\mathfrak S}(s)\overline{\mathfrak{F}}(s)ds.
	\end{equation}
	Next from Assumption~\ref{hyp-ga-1}.(iii), we have
	\begin{equation} \label{eq_equil1-b}
		\int_{0}^{+\infty}\overline{\lambda}_0(u)du\leq R_0<+\infty.
	\end{equation}
	This combined with \eqref{eq_equil1} implies \eqref{eq_equil1-a}. 
	Thus from the proof of Case 1 and \eqref{eq_equil1-a}, it suffices to show that  $\int_{0}^{+\infty}\overline{\mathfrak{F}}(s)ds<+\infty$.
	We prove this claim by contradiction. Suppose that 
	\begin{equation}
		\int_{0}^{+\infty}\overline{\mathfrak{F}}(u)du=+\infty.\label{eq_lim_F}
	\end{equation}
	By the second line in \eqref{LLN_GF}, using Fubuni's theorem, we obtain 
	\begin{align*}\label{eq_IF}
		\int_{0}^{t}\overline{\mathfrak{F}}(u)du&=\overline{I}(0)\int_{0}^{t}\overline{\lambda}_0(u)du+\int_{0}^{t}\left(\int_{0}^{t-u}\overline{\lambda}(s)ds\right)\overline{\mathfrak{S}}(u)\overline{\mathfrak F}(u)du\\
		&=\overline{I}(0)\int_{0}^{t}\overline{\lambda}_0(u)du+R_0\int_{0}^{t}\overline{\mathfrak{S}}(u)\overline{\mathfrak F}(u)du-\int_{0}^{t}\left(\int_{t-u}^{+\infty}\overline{\lambda}(s)ds\right)\overline{\mathfrak{S}}(u)\overline{\mathfrak F}(u)du.
	\end{align*}
	Consequently, 
	\begin{equation}\label{frac_lim}
		\frac{\int_{0}^{t}\overline{\mathfrak{S}}(u)\overline{\mathfrak F}(u)du}{\int_{0}^{t}\overline{\mathfrak{F}}(u)du}=\frac{1}{R_0}+\frac{\int_{0}^{t}\left(\int_{t-u}^{+\infty}\overline{\lambda}(s)ds\right)\overline{\mathfrak{S}}(u)\overline{\mathfrak F}(u)du}{R_0\int_{0}^{t}\overline{\mathfrak{F}}(u)du}-\frac{\overline{I}(0)\int_{0}^{t}\overline{\lambda}_0(u)du}{R_0\int_{0}^{t}\overline{\mathfrak{F}}(u)du}.
	\end{equation}
	By \eqref{eq_equil1-b} and \eqref{eq_lim_F}, we have
	\begin{equation}\label{eq_F_1}
		\frac{\int_{0}^{t}\overline{\lambda}_0(u)du}{\int_{0}^{t}\overline{\mathfrak{F}}(u)du}\xrightarrow[t\to+\infty]{}0.
	\end{equation}
	In addition, since $\int_{t}^{+\infty}\overline{\lambda}(s)ds\to0$ as $t\to+\infty$,  for $\epsilon>0$ there exists $T_\epsilon>0$ such that $\int_{T_\epsilon}^{+\infty}\overline{\lambda}(s)ds < \epsilon$. Hence, for $t\geq T_\epsilon$,
	\begin{align*}
		& \int_{0}^{t}\left(\int_{t-u}^{+\infty}\overline{\lambda}(s)ds\right)\overline{\mathfrak{S}}(u)\overline{\mathfrak F}(u)du \\
		&=\int_{0}^{t}\left(\int_{u}^{+\infty}\overline{\lambda}(s)ds\right)\overline{\mathfrak{S}}(t-u)\overline{\mathfrak F}(t-u)du\\
		&\leq\int_{0}^{T_\epsilon}\left(\int_{u}^{+\infty}\overline{\lambda}(s)ds\right)\overline{\mathfrak{S}}(t-u)\overline{\mathfrak F}(t-u)du+\epsilon\int_{T_\epsilon}^{t}\overline{\mathfrak{S}}(t-u)\overline{\mathfrak F}(t-u)du\\
		&\leq R_0\lambda_* T_\epsilon+\epsilon\int_{0}^{t}\overline{\mathfrak F}(u)du.
	\end{align*}
	Thus by \eqref{eq_lim_F}, we have 
	\begin{equation}\label{eq_F_2}
		\frac{\int_{0}^{t}\left(\int_{t-u}^{+\infty}\overline{\lambda}(s)ds\right)\overline{\mathfrak{S}}(u)\overline{\mathfrak F}(u)du}{\int_{0}^{t}\overline{\mathfrak{F}}(u)du}\xrightarrow[t\to+\infty]{}0.
	\end{equation}
	Hence under the assumption \eqref{eq_lim_F}, from \eqref{frac_lim}, \eqref{eq_F_1} and \eqref{eq_F_2}, we obtain 
	\begin{equation}
		\frac{\int_{0}^{t}\overline{\mathfrak{S}}(u)\overline{\mathfrak F}(u)du}{\int_{0}^{t}\overline{\mathfrak{F}}(u)du}\xrightarrow[t\to+\infty]{}\frac{1}{R_0}.\label{eq_frac_lim}
	\end{equation}
	On the other hand, from \eqref{eqG1} and the fact that $\gamma\leq\gamma_*$ we have
	\begin{equation}\label{eq_in_G}
		\int_{0}^{u} \E \left[ \exp \left( - \gamma_\ast\int_{s}^{u}\overline{\mathfrak F}(r) dr \right) \right] \overline{\mathfrak S}(s) \overline{\mathfrak F}(s) ds<1.
	\end{equation}
	Next, multiplying by $\overline{\mathfrak F}(u)$ and integrating from $0$ to $t$ both sides of \eqref{eq_in_G},   we have
	\begin{equation*}
		\int_{0}^{t}\left(\int_{0}^{u} \E \left[ \overline{\mathfrak F}(u)\exp \left( - \gamma_\ast\int_{s}^{u}\overline{\mathfrak F}(r) dr \right) \right] \overline{\mathfrak S}(s) \overline{\mathfrak F}(s) ds\right)du<\int_{0}^{t}\overline{\mathfrak F}(u)du,
	\end{equation*}
	and by Fubuni's theorem,
	\begin{equation*}
		\int_{0}^{t}\E \left[ \int_{s}^{t}\overline{\mathfrak F}(u)\exp \left( - \gamma_\ast\int_{s}^{u}\overline{\mathfrak F}(r) dr \right) du\right]\overline{\mathfrak S}(s) \overline{\mathfrak F}(s)ds<\int_{0}^{t}\overline{\mathfrak F}(u)du,
	\end{equation*}
	from which we obtain 
	\begin{equation*}
		\int_{0}^{t}\E \left[\frac{1-\exp \left( - \gamma_\ast\int_{s}^{t}\overline{\mathfrak F}(r) dr \right)}{\gamma_*}\right]\overline{\mathfrak S}(s) \overline{\mathfrak F}(s)ds<\int_{0}^{t}\overline{\mathfrak F}(u)du.
	\end{equation*}
	Hence for $0<\epsilon\leq1$, since $\indic{\gamma_\ast\geq\epsilon}\leq1$,
	\begin{equation*}
		\int_{0}^{t}\E \left[\frac{1-\exp \left( - \gamma_\ast\int_{s}^{t}\overline{\mathfrak F}(r) dr \right)}{\gamma_*}\indic{\gamma_\ast\geq\epsilon}\right]\overline{\mathfrak S}(s) \overline{\mathfrak F}(s)ds<\int_{0}^{t}\overline{\mathfrak F}(u)du.
	\end{equation*}
	Thus,
	\begin{multline}\label{eq_d}
		\E \left[\frac{1}{\gamma_*}\indic{\gamma_\ast\geq\epsilon}\right]\int_{0}^{t}\overline{\mathfrak S}(s) \overline{\mathfrak F}(s)ds<\int_{0}^{t}\overline{\mathfrak F}(u)du\\+\int_{0}^{t}\E \left[\frac{1}{\gamma_*}\indic{\gamma_\ast\geq\epsilon}\exp \left( - \gamma_\ast\int_{s}^{t}\overline{\mathfrak F}(r) dr \right)\right]\overline{\mathfrak S}(s) \overline{\mathfrak F}(s)ds.
	\end{multline}
	Moreover, from \eqref{eq_in_G},
	\begin{equation*}
		\int_{0}^{t}\E \left[\frac{1}{\gamma_*}\indic{\gamma_\ast\geq\epsilon}\exp \left( - \gamma_\ast\int_{s}^{t}\overline{\mathfrak F}(r) dr \right)\right]\overline{\mathfrak S}(s) \overline{\mathfrak F}(s)ds\leq\frac{1}{\epsilon}. 
	\end{equation*}
	Consequently, under the assumption \eqref{eq_lim_F}, we have
	\begin{equation*}
		\frac{\int_{0}^{t}\E \left[\frac{1}{\gamma_*}\indic{\gamma_\ast\geq\epsilon}\exp \left( - \gamma_\ast\int_{s}^{t}\overline{\mathfrak F}(r) dr \right)\right]\overline{\mathfrak S}(s) \overline{\mathfrak F}(s)ds}{\int_{0}^{t}\overline{\mathfrak F}(u)du}\xrightarrow[t\to+\infty]{}0.
	\end{equation*}
	This implies that, by \eqref{eq_d}, for all $0<\epsilon\leq1$, 
	\begin{equation*}
		\limsup_{t\to+\infty}\frac{\int_{0}^{t}\overline{\mathfrak{S}}(u)\overline{\mathfrak F}(u)du}{\int_{0}^{t}\overline{\mathfrak{F}}(u)du}\leq\left(\E\left[\frac{1}{\gamma_\ast}\indic{\gamma_\ast\geq\epsilon}\right]\right)^{-1}. 
	\end{equation*}
	Since  $\mathbb{P}\left(\gamma_\ast=0\right)=0,$ we deduce by the monotone convergence theorem that
	\begin{equation*}
		\limsup_{t\to+\infty}\frac{\int_{0}^{t}\overline{\mathfrak{S}}(u)\overline{\mathfrak F}(u)du}{\int_{0}^{t}\overline{\mathfrak{F}}(u)du}\leq\left(\E\left[\frac{1}{\gamma_\ast}\right]\right)^{-1},
	\end{equation*}
	However, this contradicts \eqref{eq_frac_lim} since $R_0<\E\left[\frac{1}{\gamma_\ast}\right]$ by the assumption of Theorem~\ref{thm-eqlm1}. 
		
	This completes the proof of the second case. 
\end{proof}

\subsection[Proofs in the endemic case]{Proofs in the case $R_0\geq\E\left[\frac{1}{\gamma_*}\right]$}
In this subsection, we prove Theorem \ref{conv1-f}.

\begin{proof}[Proof of Theorem \ref{conv1-f}~$(i)$]  
	As $ \overline{\mathfrak S}(t) \to \overline{\mathfrak S}_* $ and $ \overline{\mathfrak F}(t) \to \overline{\mathfrak F}_* $ as $ t \to \infty, R_0<+\infty$ and $\overline{\mathfrak F}(t)\overline{\mathfrak S}(t)\leq\lambda_\ast$, by the dominated convergence theorem, we have
	\begin{equation}\lim_{t\to+\infty}\int_{0}^{t}\overline{\lambda}(t-s)\overline{\mathfrak{S}}(s)\overline{\mathfrak{F}}(s)ds=\lim_{t\to+\infty}\int_{0}^{+\infty}\overline{\lambda}(s)\overline{\mathfrak{S}}(t-s)\overline{\mathfrak{F}}(t-s)\indic{[0,t]}(s)ds=\overline{\mathfrak{S}}_\ast\overline{\mathfrak{F}}_\ast R_0.\label{eqs19}\end{equation}
	Thus by \eqref{eqs19} and the second line of \eqref{LLN_GF}, using the fact that $ \overline{\lambda}_0(t) \to 0 $ as $ t \to \infty $,
	\[\overline{\mathfrak{F}}_\ast=\overline{\mathfrak{S}}_\ast\overline{\mathfrak{F}}_\ast R_0.\]
	As a result, either $ \overline{\mathfrak F}_* = 0 $ or else $ \overline{\mathfrak S}_* = \frac{1}{R_0} $.
	
	In the following, we assume that $ \overline{\mathfrak F}_* > 0 $, then $ \overline{\mathfrak S}_* = \frac{1}{R_0} $.
	
	Recall  \eqref{eqG1}. 
	Since $ \E \left[ \frac{1}{\gamma_*} \right] \leq R_0< + \infty $ and $\mathbb{P}(\gamma_\ast=0)=0,\,\gamma(t+\xi) \overline{\mathfrak{F}}(t) \to \gamma_\ast \overline{\mathfrak{F}}_\ast > 0 $ with probability one when $t\to+\infty$ and hence 
	\[\lim_{t\to+\infty}\int_{0}^{t}\gamma(r+\xi)\overline{\mathfrak{F}}(r)dr=+\infty\text{ and }\lim_{t\to+\infty}\int_{0}^{t}\overline{\mathfrak{F}}(r)dr=+\infty\text{ almost surely.} \]
	It follows that
	\[\lim_{t\to+\infty}\E\left[  \exp\left(-\int_0^t \gamma_0(r) \overline{\mathfrak{F}}(r) dr \right)\right]= 0. \]
	Hence from \eqref{eqG1} we have
	\begin{equation}\label{LLNG1}
		\lim_{t\to+\infty}\int_{0}^{t}\E\left[\exp\left(- \int_s^t \gamma(r-s) \overline{\mathfrak{F}}(r) dr \right)\right]\overline{\mathfrak{S}}(s) \overline{\mathfrak{F}}(s)ds=1.
	\end{equation}
	Since $\overline{\mathfrak{F}}(t)\to\overline{\mathfrak F}_\ast$ when $t\to+\infty$, there exists $t_0>0$ such that for all $t\geq t_0,\,\overline{\mathfrak F}(t)\geq\frac{\overline{\mathfrak F}_\ast}{2}$. 
	Then,
	\begin{multline} \label{integral_split}
		\int_{0}^{t}\E\left[\exp\left(- \int_s^t \gamma(r-s) \overline{\mathfrak{F}}(r) dr \right)\right]\overline{\mathfrak{S}}(s) \overline{\mathfrak{F}}(s)ds \\
		= \int_{0}^{t_0} \E\left[\exp\left(- \int_0^{t-s} \gamma(r) \overline{\mathfrak{F}}(s+r) dr \right)\right]\overline{\mathfrak{S}}(s) \overline{\mathfrak{F}}(s)ds \\
		+ \int_{0}^{t-t_0} \E\left[\exp\left(- \int_0^s \gamma(r) \overline{\mathfrak{F}}(t+r-s) dr \right)\right]\overline{\mathfrak{S}}(t-s) \overline{\mathfrak{F}}(t-s)ds.
	\end{multline}
	The first term on the right hand side converges to zero as $ t \to \infty $ by the dominated convergence theorem.
	%
	%
	On the other hand, for $0<s<t-t_0$, 
	\begin{equation} 
		\exp\left(- \int_{0}^s \gamma(r) \overline{\mathfrak{F}}(r+t-s) dr \right)\leq\exp\left(-\frac{\overline{\mathfrak F}_\ast}{2}\int_{0}^s \gamma(r)dr\right),\label{end1}
	\end{equation}
	and by Assumption~\ref{hyp-ga}, we deduce that  
	\begin{align}
		\int_{0}^{+\infty}\E\left[\exp\left(-\frac{\overline{\mathfrak F}_\ast}{2}\int_{0}^s \gamma(r)dr\right) \right] ds &\leq\E\left[t_\ast\right]+\E\left[\int_{0}^{+\infty}\exp\left(-\frac{\overline{\mathfrak F}_\ast\gamma_\ast}{4}s\right)ds\right]\nonumber\\
		&=\E\left[t_\ast\right]+\frac{4}{\overline{\mathfrak{F}}_\ast}\E\left[\frac{1}{\gamma_\ast}\right]<+\infty\label{end2}
	\end{align}
	Thus, applying the dominated convergence theorem to the second term on the right-hand-side of \eqref{integral_split} and using \eqref{LLNG1}, we obtain
	\begin{equation}\label{eqq1}
		\int_{0}^{+\infty}\E\left[\exp\left(- \overline{\mathfrak{F}}_\ast\int_{0}^s \gamma(r) dr \right)\right]\overline{\mathfrak{S}}_\ast \overline{\mathfrak{F}}_\ast ds=1.
	\end{equation}  
	Next by a change of variables in \eqref{eqq1} and the fact that $\overline{\mathfrak{S}}_\ast=\frac{1}{R_0},$ we obtain \eqref{ibar1}. 
	To conclude, Lemma~\ref{lemma:H} below implies that the equation $ H(x) = R_0 $ has a unique positive solution if and only if $ R_0 > \E \left[ \frac{1}{\gamma_*} \right] $, which yields the result.  
	On the other hand, as $F^c_0(t)\to0$ as $t\to+\infty$ and $(\overline{\mathfrak S}(t),\overline{\mathfrak F}(t))\to(\overline{\mathfrak S}_\ast,\overline{\mathfrak F}_\ast)$ as $t\to+\infty$, applying the dominated convergence theorem to \eqref{eqn-barI} we obtain
	\begin{equation*}
		\overline{I}_\ast=\lim_{t\to+\infty}\int_{0}^{t}F^c(t-s)\overline{\mathfrak F}(s)\overline{\mathfrak S}(s)ds=\overline{\mathfrak S}_\ast\overline{\mathfrak F}_\ast\E[\eta]=\frac{\overline{\mathfrak F}_\ast}{R_0}\E[\eta].
	\end{equation*}
	This proves the claims of Theorem~\ref{conv1-f} $(i)$.
\end{proof}

Define $H: (0,\infty) \to \mathbb{R}_+ \cup \lbrace \infty \rbrace$ as follows,
\begin{equation} \label{def:H}
	H(x):=\int_{0}^{+\infty}\E\left[\exp\left(-\int_{0}^s \gamma\left(\frac{r}{x}\right) dr \right)\right]ds, \text{ for } x>0.
\end{equation}

\begin{lemma} \label{lemma:H}
	Under Assumptions~\ref{hyp-ga-1} and \ref{hyp-ga}, and if $ \E\left[ \frac{1}{\gamma_*} \right] < + \infty $,
	then $ H(x) < \infty $ for all $ x \in (0,\infty) $ and $ H $ is continuous and strictly increasing on $ (0,\infty) $.
	Moreover,
	\begin{align*}
		\lim_{x \to +\infty} H(x) = +\infty,
	\end{align*}
	and $ H $ can be extended to a continuous function on $ \R_+ $ by setting
	\begin{equation} \label{lim_H0}
		H(0) := \lim_{x \downarrow 0} H(x) = \E\left[ \frac{1}{\gamma_*} \right].
	\end{equation}
\end{lemma}

\begin{proof}
	Let us start by showing that $ H(x) < \infty $ for all $ x \in (0,\infty) $.
	By Assumption~\ref{hyp-ga},
	\begin{equation*}
		\gamma(r) \geq \frac{\gamma_*}{2} \indic{r \geq t_*}.
	\end{equation*}
	Then,
	\begin{align}
		\int_{0}^{\infty} \exp\left( -\int_{0}^{s} \gamma\left( \frac{r}{x} \right) dr \right) ds &\leq x t_* + \int_{x t_*}^{\infty} \exp\left( - \frac{\gamma_*}{2} (s - x t_*) \right) ds \nonumber \\
		&= x t_* + \frac{2}{\gamma_*}. \label{bound_integrand}
	\end{align}
	Note that, in the last step, we use the fact that $ \gamma_* > 0 $ almost surely, since $ \E\left[ \frac{1}{\gamma_*} \right] < \infty $.
	Taking the expectation on both sides of the above inequality and using the fact that $ \E[t_*] < \infty $ and $ \E\left[ \frac{1}{\gamma_*} \right] < \infty $, by Fubini's theorem we obtain that $ H(x) < \infty $ for all $ x \in (0,\infty) $.
	
	Since $ t \mapsto \gamma(t) $ is non-decreasing, we directly obtain that $ x \mapsto H(x) $ is also non-decreasing.
	Moreover, for $ x > 0 $, $H(x)$ is also given by the following equivalent formula
	\begin{equation} \label{alternative_expr_H}
		H(x)=x\int_0^\infty \E\left[\exp\left(-x\int_0^s \gamma(r)dr\right)\right]ds\,.
	\end{equation}
	Using \eqref{bound_integrand} and the dominated convergence theorem, $H$ is therefore continuous on $ (0,\infty) $.
	To prove that $ H $ is strictly increasing, assume by contradiction that $ H(x)=H(y) $ for some $ y > x $. 
	Then, for almost every $ r > 0 $, $ \gamma(r/x)=\gamma(r/y) $ almost surely, which implies that $ \gamma $ is constant, leading to a contradiction.
	
	Since $ \gamma(0) = 0 $, by \eqref{def:H} and the monotone convergence theorem,
	\begin{equation*}
		\lim_{x \to \infty} H(x) = + \infty.
	\end{equation*}
	Similarly, using the fact that $ \gamma $ is non-decreasing, \eqref{bound_integrand} and the dominated convergence theorem, we obtain
	\begin{equation*}
		\lim_{x \downarrow 0} H(x) = \int_{0}^{\infty} \E \left[ \exp\left( - s \gamma_* \right) \right] ds.
	\end{equation*}
	By Fubini's theorem, using the fact that $ \E \left[ \frac{1}{\gamma_*} \right] < \infty $, we obtain \eqref{lim_H0}, which concludes the proof of the lemma.
\end{proof}

\begin{proof}[Proof of Theorem~\ref{conv1-f}~$(ii)$] 
	The goal of Theorem~\ref{conv1-f}~$(ii)$ is to prove that, if $\gamma_\ast$ is deterministic, $R_0>\frac{1}{\gamma_\ast}$, for all $\delta>0$ there exists $t_\delta$ such that $\gamma(t_\delta)\geq(1-\delta)\gamma_\ast$ and if there exists a positive decreasing function $h$ such that for all $0\leq s, t,\,\overline{\lambda}(t+s)\geq h(t)\overline{\lambda}(s),$ and $\overline{\mathfrak F}(0)>0,$ then there exists $c>0$ such that for all $t\geq0,\,\overline{\mathfrak F}(t)\geq c$.
	
	Let $\delta>0$ be such that $(1-\delta)\gamma_\ast>\frac{1}{R_0}$.
	From Assumption~\ref{ASS2}, there exists $s_1\geq0$ deterministic such that $\gamma_{0}(s_1)\wedge\gamma(s_1)\geq(1-\delta)\gamma_\ast$ a.s.
	Let $\epsilon>0$ be such that $(1-\delta)\gamma_\ast>\frac{1+\epsilon}{R_0}$.
	Let $x$ be the solution of the following Volterra equation
	\begin{equation}\label{V1}
		x(t)= h(s_1+t)+(1+\epsilon)\int_{0}^{t}p(t-s)x(s)ds,\;t\geq0, 
	\end{equation}
	with
	\begin{align*}
		p(t)=\frac{\overline{\lambda}(t)}{R_0}.
	\end{align*}
	As 
	\[(1+\epsilon)\int_{0}^{+\infty}p(t)dt=1+\epsilon>1,\qquad\int_{0}^{+\infty}h(t)dt<+\infty\] where the integrability of $h$ results from Assumption~\ref{ASS3} and the integrability of $\overline\lambda$. Moreover, since $h$ and $p$ are bounded and non-negative, by \cite[Remark following Theorem~$4$, page~$253$]{feller2015integral}, $x(t)\to+\infty$ as $t\to+\infty$, hence there exists $s_2\geq0$ such that $x(s_2)>2$.
	
	Let $c_2>0$ be such that 
	\begin{equation}\label{inR_0}
		(1-\delta)\gamma_\ast\exp(-c_2(s_1+s_2))\geq\frac{1+\epsilon}{R_0}.
	\end{equation}
	Let \[c_1=\frac{1}{2}\min(c_2,\overline{\mathfrak F}(0)),\, \quad \text{ and } \quad c_0=\frac{c_1}{2}h(s_1+s_2),\]
	\[t_0=\inf\{t\geq0,\,\overline{\mathfrak F}(t)\leq c_0\},\quad \text{ and } \quad t_1=\sup\{t\leq t_0,\,\overline{\mathfrak F}(t)\geq c_1\}.\]  
	Since $h$ is decreasing and $h(0)=1$ we have $c_0<c_1$.
	We want to show that $t_0=\infty$, and will prove it by contradiction. 
	Let us thus suppose that $t_0<+\infty$, which implies that $t_1<+\infty$. From Assumption~\ref{ASS3}, by the continuity of $\overline{\mathfrak F}$ and the definition of $t_1$, for all $t\geq t_1$, we obtain
	\begin{align*}
		\overline{\mathfrak F}(t)&=\overline{\lambda}_0(t)\overline{I}(0)+\int_{0}^{t}\overline{\lambda}(t-s)\overline{\mathfrak F}(s)\overline{\mathfrak S}(s)ds\\
		&\geq\overline{\lambda}_0(t-t_1+t_1)\overline{I}(0)+\int_{0}^{t_1}\overline{\lambda}(t-t_1+t_1-s)\overline{\mathfrak F}(s)\overline{\mathfrak S}(s)ds\\
		&\geq h(t-t_1)\left(\overline{\lambda}_0(t_1)\overline{I}(0)+\int_{0}^{t_1}\overline{\lambda}(t_1-s)\overline{\mathfrak F}(s)\overline{\mathfrak S}(s)ds\right)\\
		&= h(t-t_1)\overline{\mathfrak F}(t_1)\\
		&\geq c_1 h(t-t_1). 
	\end{align*}
	The definition of $t_0$ and the continuity of $\overline{\mathfrak F},$ implies that $\overline{\mathfrak F}(t_0)\leq c_0.$ Combining with the last inequality evaluated at $t=t_0$, we have $c_0\geq c_1 h(t_0-t_1)$. Hence, by the definition of $c_0$ and the fact that $h$ is decreasing, we deduce that $t_0-t_1> s_1+s_2$. So $t_0>t_1+s_1+s_2$ and for all $t\in[t_1,t_0]$
	\begin{equation}\overline{\mathfrak F}(t)\leq c_1< c_2.\label{e-q}\end{equation}
	
	On the other hand, as $\gamma_0(t)\leq1$ and $\gamma(t)\leq1$,  we have, for all $t\geq t_1$,
	\begin{align*}
		\overline{\mathfrak S}(t)&=\E\left[\gamma_{0}(t)\exp\left(-\int_{0}^{t}\gamma_{0}(r)\overline{\mathfrak F}(r)dr\right)\right] \\
		& \hspace{1.5cm} +\int_{0}^{t}\E\left[\gamma(t-s)\exp\left(-\int_{s}^{t}\gamma(r-s)\overline{\mathfrak F}(r)dr\right)\right]\overline{\mathfrak F}(s)\overline{\mathfrak S}(s)ds\\
		&\geq\exp\left(-\int_{t_1}^{t}\overline{\mathfrak F}(r)dr\right)\left(\E\left[\gamma_{0}(t)\exp\left(-\int_{0}^{t_1}\gamma_{0}(r)\overline{\mathfrak F}(r)dr\right)\right]\right.\\
		&\hspace{1.5cm}\left.+\int_{0}^{t_1}\E\left[\gamma(t-s)\exp\left(-\int_{s}^{t_1}\gamma(r-s)\overline{\mathfrak F}(r)dr\right)\right]\overline{\mathfrak F}(s)\overline{\mathfrak S}(s)ds\right).
	\end{align*} 
	But, as for $t\in[t_1+s_1,t_1+s_1+s_2]$ and $s\in[0,t_1],\,\gamma_{0}(t)\wedge\gamma(t-s)\geq\gamma_{0}(s_1)\wedge\gamma(s_1)\geq(1-\delta)\gamma_\ast,$ and using  \eqref{eqG1} at time $t_1$ we deduce that, for all $t\in[t_1+s_1,t_1+s_1+s_2]$,
	
	\begin{align*}
		\overline{\mathfrak{S}}(t)&\geq(1-\delta)\gamma_\ast\exp\left(-\int_{t_1}^{t}\overline{\mathfrak F}(r)dr\right).
	\end{align*} 
	Moreover, since from \eqref{e-q} for $t\in[t_1+s_1,t_1+s_1+s_2],\,\overline{\mathfrak F}(t)\leq c_2$,
	\begin{align*}
		\overline{\mathfrak{S}}(t)&\geq(1-\delta)\gamma_\ast\exp(-c_2(s_2+s_1)).
	\end{align*} 
	
	Then from \eqref{inR_0}
	\begin{equation}
		\forall t\in[t_1+s_1,t_1+s_1+s_2],\; \quad \overline{\mathfrak{S}}(t)\geq\frac{1+\epsilon}{R_0}.\label{e-q1}
	\end{equation}
	Let $y(t)=\overline{\mathfrak F}(t+t_1+s_1)$ and define $g$ as follows:
	\begin{equation*}
		g(t)=\overline{I}(0)\overline{\lambda}_0(t_1+s_1+t)+\int_{0}^{t_1+s_1}\overline{\lambda}(t_1+s_1+t-s)\overline{\mathfrak F}(s)\overline{\mathfrak S}(s)ds,
	\end{equation*}
	where we recall that
	\begin{align*}
		p(t)=\frac{\overline{\lambda}(t)}{R_0}.
	\end{align*}
	Then using \eqref{e-q1} for any $t\geq0$,
	\begin{align*}
		y(t)&\geq g(t)+(1+\epsilon)\int_{t_1+s_1}^{t_1+s_1+t}p(t_1+s_1+t-s)y(s-t_1-s_1)ds\\
		&= g(t)+(1+\epsilon)\int_{0}^{t}p(t-s)y(s)ds.
	\end{align*}
	However, from Assumption~\ref{ASS3} we deduce that
	\begin{equation*}
		g(t)\geq \overline{\mathfrak F}(t_1) h(s_1+t)
	\end{equation*}
	and as $\overline{\mathfrak F}(t_1)= c_1$ by continuity, we deduce that 
	\begin{equation}\label{eq-last}
		y(t)\geq c_1 h(s_1+t)+(1+\epsilon)\int_{0}^{t}p(t-s)y(s)ds.
	\end{equation}
	Thus, applying Lemma~9.8.2 in \cite{gripenberg_volterra_1990} to $ -y(t) $ in \eqref{eq-last} with the convolution kernel $ k(s) = (1+\varepsilon)p(s) $, we obtain
	\begin{equation*}
		y(t)\geq c_1 x(t)
	\end{equation*}
	where $x$ is given by \eqref{V1}.
	However $x(s_2)>2$. Hence $\overline{\mathfrak F}(t_1+s_1+s_2)>2c_1>c_1$ and $t_0\geq t_1+s_1+s_2$, this contradicts the definition of $t_1$. Hence $t_0=+\infty$. This concludes the proof. 
\end{proof} 

\bibliographystyle{abbrv}
\bibliography{Epidemic-Age,VIVS_raphael}

\end{document}